\documentclass[a4paper, 11pt]{article}

\usepackage[utf8]{inputenc}			
\usepackage[T1]{fontenc}				
\usepackage[english]{babel}


\usepackage{amsmath}
\usepackage{amssymb}
\usepackage{amsthm}
	\theoremstyle{plain}
		\newtheorem*{mainthm}{Theorem}		
		\newtheorem*{maincor}{Corollary}
		\newtheorem{thm}{Theorem}[section]	
		\newtheorem{cor}[thm]{Corollary}		
		\newtheorem{lem}[thm]{Lemma}		
		\newtheorem{prop}[thm]{Proposition}

	\theoremstyle{definition}
		\newtheorem{defn}[thm]{Definition}	

	\theoremstyle{remark}
		\newtheorem{rmk}[thm]{Remark}		

\numberwithin{equation}{section}	

\usepackage{mathtools}		
\usepackage{mathrsfs}		
\usepackage{eucal}			

\usepackage{braket}			
\usepackage[all,pdf]{xy}		

\setcounter{MaxMatrixCols}{12}

\usepackage{mparhack}		
\usepackage{fixltx2e}		
\usepackage{relsize}			

\usepackage{a4wide}		

\usepackage{booktabs}		
\usepackage{multirow}		
\usepackage{caption}		
	\captionsetup[table]{position = top}	

\usepackage{varioref}		




\usepackage{enumerate}		
\usepackage[colorlinks=true, linkcolor=black, citecolor=black, urlcolor=blue]{hyperref}		





\newcommand{\A}{\mathscr{A}}	
\newcommand{\B}{\mathscr{B}}		
\newcommand{\C}{\mathbb{C}}		
\newcommand{\Cscr}{\mathscr{C}}
\newcommand{\D}{\mathcal{D}}
\newcommand{\G}{\mathscr{G}}
\renewcommand{\H}{\mathcal{H}}	
\newcommand{\I}{\mathcal{I}}		
\newcommand{\M}{\mathcal{M}}	
\newcommand{\N}{\mathbb{N}}		
\newcommand{\R}{\mathbb{R}}		
\renewcommand{\S}{\mathcal{S}}	
\newcommand{\U}{\mathbb{U}}		
\newcommand{\W}{\mathcal{W}}
\newcommand{\Z}{\mathbb{Z}}		

\newcommand{\Bsa}{\B^{\textup{sa}}}

\newcommand{\SO}{\mathrm{SO}}
\newcommand{\Sp}{\mathrm{Sp}}
\newcommand{\ssp}{\mathfrak{sp}}
\newcommand{\Mat}{\mathrm{Mat}}

\let\d\relax
\newcommand{\d}{\mathrm{d}}				
\newcommand{\eps}{\varepsilon}
\newcommand{\norm}[1]{\left\lVert #1 \right\rVert}			
\newcommand{\opnorm}[2]{\norm{#1}_{\mathscr{L}(#2)}}		
\newcommand{\abs}[1]{\left\lvert #1 \right\rvert}				
\newcommand{\trasp}[1]{{#1}^\mathsf{T}}					
\newcommand{\Uhom}{U_{\alpha}}
\newcommand{\Ulog}{U_{\log}}
\newcommand{\nmext}{n^-_{\textup{ext}}}		
\newcommand{\npext}{n^+_{\textup{ext}}}		
\newcommand{\iMor}{i_{\textup{Morse}}}		

\DeclareMathOperator{\diag}{diag}		
\DeclareMathOperator{\spfl}{sf}			
\DeclareMathOperator{\sgn}{sgn}		
\DeclareMathOperator{\tr}{tr}			
\DeclareMathOperator{\ord}{ord}		
\DeclareMathOperator{\gen}{span}
\DeclareMathOperator{\im}{im}			

\renewcommand{\leq}{\leqslant}
\renewcommand{\geq}{\geqslant}
\renewcommand{\hat}{\widehat}
\renewcommand{\tilde}{\widetilde}
\renewcommand{\=}{\coloneqq}			
\newcommand{\eq}{\eqqcolon}			

\newcommand{\ie}{i.e.~}

\newcommand{\email}[1]{\href{mailto:#1}{\textsf{#1}}}


\title{Linear instability of relative equilibria\\ for $n$-body problems in the plane}
\author{Vivina L.~Barutello, Riccardo D.~Jadanza, Alessandro Portaluri%
	\thanks{The authors were partially supported by PRIN 2009 ``Critical Point Theory and Perturbative Methods for Non-Linear Differential Equations''.}}
\date{\today}


\begin{document}

	\maketitle

	\begin{abstract}
		Following Smale, we study simple symmetric mechanical systems of $n$ point particles in the plane. In particular, we address the question of the linear and spectral 
		stability properties of relative equilibria, which are special solutions of the equations of motion. 

		Our main result is a sufficient condition to detect spectral (hence linear) instability. Namely, we prove that if the Morse index of an equilibrium point with even nullity is odd, 
		then the associated relative equilibrium is spectrally unstable. The proof is based on some refined formul\ae\  for computing the spectral flow.
		
		As a notable application of our theorem, we examine two important classes of singular potentials: the $\alpha$-homogeneous one, with $\alpha \in (0, 2)$, 
		which includes the gravitational case, and the logarithmic one. We also establish, for the $\alpha$-homogeneous potential,
		an inequality  which is useful to test the spectral instability of the associated relative equilibrium. 
		\vspace{0.5truecm}

		\noindent
		\emph{MSC Subject Class}: Primary 70F10; Secondary 37C80.
		
		\vspace{0.5truecm}
		
		\noindent
		\emph{Keywords}: linear instability, relative equilibria, spectral flow, partial signatures, $n$-body problem, $\alpha$-homogeneous potential, logarithmic potential.
	\end{abstract}
	
	\footnotetext[1]{\emph{2010 Mathematics Subject Classification:} Primary 35L05, 35P15; Secondary 53D12, 35B05.}

	\section{Introduction}

	Simple mechanical systems are a special class of Hamiltonian systems in which the Hamiltonian function can be written as the sum of the potential and kinetic energies.
	The search for special orbits, such as equilibria and periodic orbits, and the understanding of their stability properties are amongst the major subjects in the whole theory of Dynamical
	Systems.
	
	In 1970, in one of his famous papers \cite{smale1}, S.~Smale, following the ideas sketched out by E.~Routh in \cite{Routh2}, examined the stability of relative equilibria of simple mechanical
	systems with symmetries. For a general system of this kind, a \emph{relative equilibrium} is a dynamical fixed point (\ie an equilibrium point) in the \emph{reduced} phase space obtained by
	quotienting the original phase space by the symmetry group. Thus, generally speaking, relative equilibria are the analogue of fixed points for systems without symmetry (whence their great
	importance), yet they can also be viewed as one-parameter group orbits. Of course, the larger the symmetry group is, the richer the supply of relative equilibria becomes.
	For a system of particles in the plane described in the coordinates of the centre of mass, subject to the action of the rotation group $\SO(2)$ --- like the one that we examine here --- relative
	equilibria are solutions in which the whole system rotates with constant angular velocity around the barycentre. For this reason they are also called \emph{dynamical motions in steady
	rotation}.
	
	Given a relative equilibrium, it is natural to investigate its stability properties in order to understand the dynamical behaviour of the orbits nearby.
	Two of the main methods used to study the \emph{stability} of relative equilibria are the \emph{Energy-Casimir method} and the \emph{Energy-Momentum method}; however, even when
	applicable, they do not give any information about the \emph{instability} without further investigation. One of the few feasible methods to study the matter of stability is to show that the
	Hamiltonian $H$, or some other integral, has a maximum or minimum at a critical point: if the maximum or minimum is isolated then $H$ is a Lyapunov function and the equilibrium point is
	stable. Unfortunately, in the $n$-body context, it is easy to see (cf.~\cite[page~86]{Moeckel}) that this approach never works in the case of relative equilibria, and for this reason it is hopeless
	to try to prove their stability (or instability).
	Instead of that, we concentrate here on the notions of \emph{linear} and \emph{spectral stability} (see Subsection~\ref{subsec:linstabham} for their definition): we linearise the Hamiltonian
	system around a relative equilibrium and analyse its features. This involves the computation of the spectrum of a Hamiltonian matrix, which is symmetric with respect to both axes in the
	complex plane. A direct consequence of this fact is that relative equilibria are never asymptotically stable.
	
	In studying symmetric systems of particles it is usual to introduce the so-called \emph{augmented potential} $\mathcal{U}_\Xi$, which is equal to the 
	potential of the system plus a term coming from the centrifugal forces (cf.~\cite{MR1171218} and references therein). The reason is that relative equilibria are precisely the critical points
	of this modified potential (see \cite{smale1}).
		
	Our main result reads as follows (see Theorem~\ref{thm:mainequiv} and Section~\ref{sec:main} for a more precise statement, further details and the proof).
	
	\begin{mainthm}
		Let $\bar{x}$ be a critical point of the augmented potential and assume that it has even nullity. If $\iMor(\bar{x})$ is odd, then the relative equilibrium corresponding to $\bar{x}$ is
		spectrally unstable. 
	\end{mainthm}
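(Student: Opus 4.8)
The plan is to reduce the spectral stability question to a parity computation for the spectral flow of an auxiliary path of Hermitian forms built from the Hessian of $\mathcal{U}_\Xi$. First I would linearise the reduced Hamiltonian system at $\bar{x}$. In rotating barycentric coordinates the Coriolis forces produce a gyroscopic term, so the second-order linearised equation takes the form $\ddot{q} + G\dot{q} + Bq = 0$ with $G$ real antisymmetric and $B = D^2\mathcal{U}_\Xi(\bar{x})$ symmetric; thus $n_-(B) = \iMor(\bar{x})$ and $\dim\ker B$ is the (even) nullity. Substituting $q = e^{\mu t}v$ gives the characteristic polynomial $P(\mu) = \det(\mu^2 I + \mu G + B)$, which --- since $\trasp{G} = -G$ and $\det A = \det \trasp{A}$ --- satisfies $P(-\mu) = P(\mu)$. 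Hence $P(\mu) = Q(\mu^2)$ for a real polynomial $Q$ of degree $d$ with leading coefficient $+1$, and the relative equilibrium is spectrally stable precisely when every root of $Q$ is real and non-positive, \ie when all eigenvalues $\mu$ lie on the imaginary axis.

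The core is a parity obstruction. In the nondegenerate case the idea is transparent: $Q(0) = \det B$ has sign $(-1)^{\iMor(\bar{x})}$, while $Q(\nu)\to +\infty$ as $\nu \to +\infty$; if $\iMor(\bar{x})$ is odd then $Q(0) < 0$, so by the intermediate value theorem $Q$ has a root $\nu^{\ast}>0$, which yields the real pair $\mu = \pm\sqrt{\nu^{\ast}}$ and hence spectral instability. To organise this robustly --- and to prepare for degeneracies --- I would instead encode the purely imaginary eigenvalues $\mu = i\beta$ as the instants where the Hermitian path $S(\beta) \= B - \beta^2 I + i\beta G$ becomes singular, and compute $\spfl\big(S,[0,\beta_{\max}]\big)$ in two ways. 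On one hand the endpoints give it, up to the partial-signature correction at $\beta = 0$: since $S(\beta_{\max})$ is negative definite and $n_+(S(0)) = d - \iMor(\bar{x}) - \dim\ker B$, one obtains $\spfl \equiv \iMor(\bar{x}) + d \pmod 2$. On the other hand each singular instant is a purely imaginary eigenvalue; assuming spectral stability, all eigenvalues are imaginary and the gross number of crossings is forced, giving $\spfl \equiv d \pmod 2$. Comparing the two congruences forces $\iMor(\bar{x})$ to be even, and the contrapositive is exactly the theorem.

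The hard part is the degenerate crossing at $\beta = 0$, which occurs precisely when $B$ is singular. There the naive endpoint formula for $\spfl$ fails and the simple sign argument for $Q(0)$ is inconclusive, so the refined spectral-flow formulae in terms of the partial signatures $\sigma_k$ of $S$ at $\beta = 0$ are indispensable: they compute the exact contribution of $\ker B$ both to $\spfl$ and to the multiplicity of the imaginary eigenvalue at the origin, equivalently the sign of $Q(\nu)$ for small $\nu > 0$. The role of the hypothesis is then to guarantee that this kernel contribution is even, so that the two parity counts above remain comparable and still yield $\iMor(\bar{x}) \equiv 0 \pmod 2$ under spectral stability; this is where even nullity enters in an essential way. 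A preliminary bookkeeping step is also needed to quotient out the trivial zero modes coming from the $\SO(2)$-symmetry and the choice of barycentric frame, so that $B$, its Morse index, and its nullity refer to the genuinely reduced system.
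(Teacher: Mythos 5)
Your strategy is, at its core, the same as the paper's: argue the contrapositive, encode the purely imaginary eigenvalues as the degeneracy instants of a one\nobreakdash-parameter family of Hermitian matrices, compute the spectral flow of that family once from its endpoints (which yields $\iMor(\bar{x})$ modulo $2$) and once as a sum of local contributions (which, under spectral stability and Krein non-degeneracy, yields the ambient dimension modulo $2$), and invoke partial signatures at the non-regular crossing at the origin. What differs is the carrier of the argument. The paper works with the first-order linearisation in $\R^{4n}$ and the \emph{affine} path $D(t) = B + tG$, $G = iJ$, with $B$ as in \eqref{eq:generalB}; the price is the extra congruence \eqref{eq:NsimB} needed to convert $n^-(B)$ into $2n - \iMor(\bar{x}) - \nu(\bar{x})$ via Sylvester's law, and the reward is that the partial-signature formul\ae\ (Propositions~\ref{thm:sgnpartial} and~\ref{thm:cruciale}) are available off the shelf for affine paths. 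You stay with the second-order gyroscopic equation and the \emph{quadratic} Hermitian pencil $S(\beta) = B - \beta^2 I + i\beta G$ in half the dimension, which makes the link with the Hessian of $\mathcal{U}_\Xi$ immediate and, in the non-degenerate case, buys you the elementary shortcut $\sgn Q(0) = \sgn\det B = (-1)^{\iMor(\bar{x})}$ followed by the intermediate value theorem --- a clean argument that the paper does not use.

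Two points need attention before this becomes a proof. First, a harmless but real sign slip: with the paper's conventions the normalised gyroscopic form is $\ddot{\eta} - 2\omega K\dot{\eta} - M^{-1/2}D^2\mathcal{U}_\Xi(\bar{x})M^{-1/2}\eta = 0$, so your $B$ is $-M^{-1/2}D^2\mathcal{U}_\Xi(\bar{x})M^{-1/2}$ and $n^-(B) = 2n - \iMor(\bar{x}) - \nu(\bar{x})$, not $\iMor(\bar{x})$; since the ambient dimension is even and $\nu(\bar{x})$ is even by hypothesis, every parity you need survives, but the identification as written is off. Second, and substantively: your crossing-side congruence ``$\spfl \equiv d \pmod 2$'' conceals both the halving $(2d - m_0)/2$ of the non-zero imaginary spectrum and the local contribution of the non-regular crossing at $\beta = 0$; in the paper these appear as two copies of $\dim\H_0/2$ and cancel modulo $2$, the second being controlled by the vanishing of the Krein signature at the eigenvalue $0$. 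For your quadratic pencil you must re-derive the analogues: the non-degeneracy of the first partial-signature form $\mathcal{B}_1$ on the generalised eigenspaces of the pencil at each $i\beta_*$ with $\beta_* > 0$ (Proposition~\ref{thm:cruciale} is proved only for affine paths, so you need the general real-analytic statement, Proposition~\ref{thm:central}, together with a computation identifying $\mathcal{B}_1$ with a Krein-type form for the pencil), and the even splitting of $\ker B$ at $\beta = 0$. None of this is an obstruction, but it is exactly where the work lies. Finally, the preliminary quotient by the $\SO(2)$ and barycentric zero modes is not needed for Theorem~\ref{thm:mainequiv} as stated; it enters only in the $n$-body application, Theorem~\ref{thm:main}.
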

	
	\noindent
	An immediate consequence is the following.
	
	\begin{maincor}
		Let $\bar{x}$ be a critical point of the augmented potential. If its Morse index or its nullity are odd then the corresponding relative equilibrium is linearly unstable.
	\end{maincor}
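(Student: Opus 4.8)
The plan is to deduce the statement directly from the Theorem, using only the elementary spectral symmetry of Hamiltonian matrices. The first thing to record is that linear stability is strictly stronger than spectral stability: a bounded linearised flow forces every eigenvalue of the associated Hamiltonian matrix $B=JL$ (with $J$ the standard symplectic matrix and $L$ the Hessian of the augmented Hamiltonian at the equilibrium) onto the imaginary axis \emph{and} forces it to be semisimple. Taking the contrapositive, it suffices, for each parity hypothesis, to produce either spectral instability or a non-trivial Jordan block attached to a purely imaginary eigenvalue. I would then split the argument according to the parity of the nullity $\nu$ of $\bar{x}$.

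If $\nu$ is even, then the hypothesis ``$\iMor(\bar{x})$ or $\nu$ odd'' forces $\iMor(\bar{x})$ to be odd, which is exactly the situation covered by the Theorem. The Theorem yields spectral instability, and since spectral instability implies linear instability this case closes with no additional work.

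The substantive case is $\nu$ odd, where the Theorem does not apply and I would argue through the zero eigenvalue of $B=JL$. Since $B$ is Hamiltonian, its spectrum is symmetric under $\lambda\mapsto-\lambda$ (as recalled in the Introduction), so its characteristic polynomial is even and the \emph{algebraic} multiplicity of the eigenvalue $0$ is even. On the other hand the \emph{geometric} multiplicity is
$$\dim\ker B=\dim\ker L=\nu,$$
the last equality coming from the Schur-complement reduction of $L$ against its positive-definite kinetic block, which identifies $\ker L$ with the kernel of the Hessian of $\mathcal{U}_\Xi$ at $\bar{x}$. As $\nu$ is odd while the algebraic multiplicity is even, the two cannot coincide; hence $0$ is a non-semisimple eigenvalue of $B$ and carries a Jordan block of size at least $2$. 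Such a block produces polynomially growing solutions of the linearised system, so the relative equilibrium corresponding to $\bar{x}$ is linearly unstable, independently of the value of $\iMor(\bar{x})$.

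The main obstacle, and the only step requiring the structural input of the earlier sections rather than pure linear algebra, is the identification $\dim\ker L=\nu$: one must verify that, after reduction by the $\SO(2)$ symmetry and the passage to the rotating frame, the kinetic part of $L$ is genuinely nondegenerate, so that its Schur complement reproduces exactly the Hessian of the augmented potential and no spurious kernel directions are created or destroyed by the reduction. Once this bookkeeping is settled, the parity clash between the (even) algebraic and the (odd) geometric multiplicity of the zero eigenvalue closes the argument.
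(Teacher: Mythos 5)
Your proof is correct and is essentially the paper's own argument: the even-nullity case is Theorem~\ref{thm:mainequiv} verbatim, and the odd-nullity case is exactly the content of the remark following the corollary (linear stability would force $\nu(\bar{x})=\nu(JB)$ to equal the even algebraic multiplicity of the zero eigenvalue by diagonalisability). The kernel identification you flag as the ``main obstacle'' is already settled in the paper by the congruence~\eqref{eq:NsimB}, whose right factor is the transpose of the left, so Sylvester's law gives $\nu(B)=\nu\bigl(D^2\mathcal{U}_\Xi(\bar{x})\bigr)$ since $M^{-1}$ is positive definite.
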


	The main tool that we use in the proof of this theorem is the \emph{spectral flow} (in the very elementary case of Hermitian matrices). We recall that this is a well-known integer-valued
	homotopy invariant of paths of self-adjoint Fredholm operators introduced by M.~F.~Atiyah, V.~K.~Patodi and I.~M.~Singer in \cite{MR0397799}.
	In finite-dimensional situations it is nothing else but the difference of the Morse index at the endpoints (see Section~\ref{sec:auxiliary} for its definition and Appendix~\ref{app:A} for its main
	properties). Up to perturbation, non-degeneracy and transversality conditions, this invariant can be computed in terms of the so-called \emph{crossing forms}, which, intuitively speaking, counts
	in an appropriate way the net number of eigenvalues crossing the value $0$ in a transversal way.
	In our setting this need not be the case; however, the third author developed in other papers (see for instance \cite{MR2057171}, or Appendix~\ref{app:A} for a short
	description) a non-perturbative analysis of the non-transversal intersections. The reason behind the choice of a non-perturbative technique lies in the fact that, in general, perturbative methods
	preserve global invariants but completely destroy the local information concerning the single intersection. By means of this theory, based on what has been termed \emph{partial signatures},
	we have been able to prove Theorem~\ref{thm:mainequiv}.
	
	The main applications of our result (see Section~\ref{sec:nbody}) are directed towards the $\alpha$-homogeneous and the logarithmic potentials, although also some other interesting classes
	can be reformulated in our framework, such as the Lennard-Jones interaction potential. Our theorem offers indeed a unifying viewpoint of all these quite different situations, since the property
	that it unravels descends only from the rotational invariance of the mechanical system. All of these potentials are extensively studied in literature: the $\alpha$-homogeneous ones are the
	natural generalisation of the gravitational attraction ($\alpha = 1$) and they are employed in different atomic models, whilst the logarithmic potential naturally arises when looking from a
	dynamical viewpoint at the stationary helicoidal solutions of the $n$-vortex filaments model, which is popular and useful in Fluid Mechanics. See \cite{MR1329405, Venturelli, MR2245515} and
	references therein for the homogeneous cases, \cite{MR3007736} for the logarithmic one and \cite{MR2439573} for a general overview.
	
	To detect a relative equilibrium in an $n$-body-type problem means to determine a moving \emph{planar central configuration} of the bodies which solves
	Newton's equations and in which the attractive force is perfectly balanced by the centrifugal one. This is currently the only way known to obtain exact solutions, albeit finding central
	configurations amounts to solving a system of highly nonlinear algebraic equations and is therefore very hard (see \cite{Moeckel} for the Newtonian case and \cite{MR2422381} for the
	$\alpha$-homogeneous one). 

	Being invariant under the symmetry group of Euclidean transformations and admitting linear momentum, angular momentum and energy as first integrals, $n$-body-type problems are highly
	degenerate. This in particular yields Jacobians with nullity $8$ (cf.~\cite{MR1736548, MR2468466} for the gravitational force), but only in an inertial reference frame: indeed, if we move (as we
	do) to a suitable uniformly rotating coordinate system (so that the relative equilibrium becomes an effective equilibrium) six out of the eight eigenvalues produced by the first integrals depend
	on the angular velocity. This is not surprising at all, since linear stability properties strongly depend on the choice of the frame of the observer.
	For this reason, studying the case $\alpha = 1$, R.~Moeckel in \cite{Moeckel} defined the linear and spectral stability by ruling out all the eigenvalues linked to this kind of degeneracy.
	In the same context, K.~R.~Meyer and D.~S.~Schmidt concluded in \cite{MeySch} a deep study of the linearised equations: in particular, they introduced a suitable system of symplectic
	coordinates in which the matrices are block-diagonal, with one block representing the translational invariance of the problem and another one carrying the symmetries induced by dilations and
	rotations. These two submatrices generate the eight eigenvalues responsible of degeneracy, whilst a third (and last) block contains all the information about stability, in the sense mentioned
	above. We observe that an analogous decomposition holds also for the potentials that we examine (see Subsection~\ref{subsec:decomp}).
		
	In this picture, it is worthwhile to mention a conjecture on linear stability stated by Moeckel, which we report here.
		\begin{quote}
			\textbf{Moeckel's Conjecture} (cf.~\cite[Problem~16]{MR2970201}) --- In the planar Newtonian $n$-body problem, the central configuration associated with a linearly stable
			relative equilibrium is a non-degenerate minimum of the potential function restricted to the \emph{shape sphere} (\ie the $\SO(2)$-quotient of the ellipsoid of inertia).
		\end{quote}
	This conjecture is still unproved; however, X.~Hu and S.~Sun have made some progress. More precisely, they showed in \cite{HuSunCRASP} that if the Morse index or the nullity of a central
	configuration (viewed as a critical point of the potential restricted to the shape sphere) are odd, then the corresponding relative equilibrium is linearly unstable. Therefore the central
	configurations giving rise to linearly stable relative equilibria should correspond to a critical point with even Morse index and nullity. The main result in \cite{HuSunCRASP} is the first attempt
	towards the understanding of the relationship (if there is any) between two dynamics: the gradient flow on the shape sphere and Hamilton's equations in the phase space.
	
	The contribution of our paper in this setting is twofold:
		\begin{enumerate}
			\item We provide a complete and detailed proof of the result on linear instability proved in \cite{HuSunCRASP} and we extend it to a very general class of interaction potentials by
				using spectral flow techniques.
		    
			\item We prove a result on spectral instability by means of the theory of partial signatures previously developed in \cite{MR2057171}. Note that our Corollary~\ref{cor:main} is actually
				the main result in \cite{HuSunCRASP} (written there in the gravitational setting only).
		\end{enumerate}
	Moeckel's Conjecture can thus be adapted to the class of potentials that we study; accordingly, we reformulate it as follows:
		\begin{quote}
			\textbf{Conjecture} --- In planar $\SO(2)$-symmetric mechanical systems, a critical point of the augmented potential associated with a linearly stable relative equilibrium, is a
			non-degenerate minimum.
		\end{quote}
	We cast some light on this question with Theorem~\ref{thm:mainequiv} and with Theorem~\ref{thm:main} in the special case of $n$-body-type problems.

	Furthermore, following the approach of G.~E.~Roberts in \cite{MR1709850}, we are able to give a sufficient condition for spectral instability of a relative equilibrium
	(at least in the $\alpha$-homogeneous case) in terms of the potential evaluated at a central configuration. It is in fact rather foreseeable that the linear stability depends also on the
	homogeneity parameter $\alpha$ (see Subsection~\ref{subsec:linstab} and cf.~Corollary~\ref{cor:ineqUhom} for a precise statement).

	\begin{mainthm}
		Let $\bar{x}$ be a central configuration. If the following inequality holds
			\[
				\sum_{\substack{i, j = 1 \\ i < j}}^{n} \frac{m_i + m_j}{\abs{\bar{x}_i - \bar{x}_j}^{\alpha + 2}} > \frac{2n + \alpha - 4}{\alpha}\ \Uhom(\bar{x})
			\]
		then the arising relative equilibrium is linearly unstable.
	\end{mainthm}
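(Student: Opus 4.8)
The plan is to read off the claimed inequality as the classical instability criterion $\tr B^{2}>0$ for the linearised Hamiltonian flow $\dot z = Bz$ at the relative equilibrium, following Roberts' trace method in \cite{MR1709850}. The gain is that one never has to compute the spectrum of $B$: a single trace identity, combined with the gyroscopic structure of the equations in the uniformly rotating frame, suffices. First I would pass to the rotating frame in which $\bar x$ is an equilibrium and linearise, obtaining on the \emph{shape space} $W$ (the $(2n-4)$-dimensional $M$-orthogonal complement of the translational, dilational and rotational directions, \ie the tangent space to the shape sphere) a gyroscopic system $\ddot\xi + G\dot\xi + K\xi = 0$, with $G = 2\omega\,J|_{W}$, $K = -(\omega^{2}\,\mathrm{Id}+A)|_{W}$, where $\omega$ is the angular velocity, $J$ the standard complex structure on $\mathbb R^{2n}\cong\mathbb C^{n}$, and $A = M^{-1}\mathrm{Hess}\,\Uhom(\bar x)$ the Hessian operator of the potential in the mass metric.

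Passing to the first-order form $\dot z = Bz$ with $z=(\xi,\dot\xi)$ and $B=\begin{pmatrix}0 & \mathrm{Id}\\ -K & -G\end{pmatrix}$, a direct block multiplication gives $\tr B^{2} = -2\tr K + \tr G^{2}$. The key structural point is that $J$ leaves $W$ invariant (it permutes the translational directions and swaps $\bar x\leftrightarrow J\bar x$), so it restricts to a complex structure on $W$; hence $G^{2}=-4\omega^{2}\,\mathrm{Id}|_{W}$ and $\tr G^{2} = -4\omega^{2}(2n-4)$. Now spectral stability forces every eigenvalue $\mu_{k}$ of $B$ to be purely imaginary, whence $\tr B^{2}=\sum_{k}\mu_{k}^{2}\leq 0$; so the strict reverse inequality $\tr B^{2}>0$ yields spectral, and a fortiori linear, instability. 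Substituting $K=-(\omega^{2}\,\mathrm{Id}+A)|_{W}$ collapses $\tr B^{2}>0$ to the clean condition $\tr_{W}A > \omega^{2}(2n-4)$, which is what I would now match against the statement.

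It remains to evaluate $\tr_{W}A$ and $\omega^{2}$. For the full-space trace I would use that \emph{in the plane} $\Delta\,\abs{y}^{-\alpha}=\alpha^{2}\abs{y}^{-\alpha-2}$ (this is exactly where planarity enters, since in $\mathbb R^{d}$ the constant is $\alpha(\alpha+2-d)$); dividing each particle block by its mass then gives $\tr A = \alpha^{2}\sum_{i<j}(m_{i}+m_{j})\abs{\bar x_{i}-\bar x_{j}}^{-\alpha-2}$, precisely $\alpha^{2}$ times the left-hand side of the asserted inequality. To descend to $W$ I would subtract the Rayleigh quotients of $A$ along the four degenerate directions, evaluated from Euler's relation for the homogeneous gradient, $\mathrm{Hess}\,\Uhom\cdot\bar x=(-\alpha-1)\nabla\Uhom$, the equivariance $\mathrm{Hess}\,\Uhom\cdot J\bar x=J\nabla\Uhom$, and the central-configuration equation $\nabla\Uhom(\bar x)=-\omega^{2}M\bar x$: the two translations contribute $0$, the dilation direction $\bar x$ contributes $(\alpha+1)\omega^{2}$, and the rotation direction $J\bar x$ contributes $-\omega^{2}$, for a total of $\alpha\omega^{2}$. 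Hence $\tr_{W}A = \alpha^{2}\sum_{i<j}(m_{i}+m_{j})\abs{\bar x_{i}-\bar x_{j}}^{-\alpha-2}-\alpha\omega^{2}$. Finally, pairing the central-configuration equation with $\bar x$ and invoking Euler's theorem for $\Uhom$ (homogeneous of degree $-\alpha$) on the normalised inertia sphere $I=1$ gives $\omega^{2}=\alpha\,\Uhom(\bar x)$; inserting this into $\tr_{W}A>\omega^{2}(2n-4)$ and dividing through by $\alpha$ reproduces $\sum_{i<j}\frac{m_{i}+m_{j}}{\abs{\bar x_{i}-\bar x_{j}}^{\alpha+2}}>\frac{2n+\alpha-4}{\alpha}\Uhom(\bar x)$ on the nose.

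The main obstacle I anticipate is bookkeeping rather than conceptual. One must verify rigorously that $J$ genuinely preserves $W$ (so that $G^{2}=-4\omega^{2}\,\mathrm{Id}|_{W}$ holds with no cross terms leaking in from the degenerate block), and that the four removed directions are \emph{exactly} the $M$-orthogonal span of the two translations, of $\bar x$, and of $J\bar x$, carrying the stated Rayleigh quotients $0,0,(\alpha+1)\omega^{2},-\omega^{2}$. It is the correct tracking of the homogeneity exponents through the two applications of Euler's theorem, together with the single substitution $\omega^{2}=\alpha\,\Uhom(\bar x)$, that forces the constant to emerge as precisely $\frac{2n+\alpha-4}{\alpha}$; any slip in these degrees shifts the threshold and breaks the match.
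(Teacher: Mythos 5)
Your argument is correct and is essentially the paper's own proof: both rest on Roberts' trace criterion that spectral stability forces the sum of the squares of the eigenvalues of the linearisation to be non-positive, together with the identity $\tr\bigl[M^{-1}D^2\Uhom(\bar{x})\bigr]=\alpha^2\sum_{i<j}(m_i+m_j)\abs{\bar{x}_i-\bar{x}_j}^{-\alpha-2}$ and the substitution $\omega^2=\alpha\,\Uhom(\bar{x})$. The only (immaterial) difference is organisational: you restrict to the shape space first and compute $\tr B^2$ there by subtracting the Rayleigh quotients $0,0,(\alpha+1)\omega^2,-\omega^2$ of the Hessian along the four symmetry directions, whereas the paper extracts the subleading coefficient of $\det P$ over the full $4n$-dimensional system and then subtracts the eight tabulated eigenvalues of $L_1$ and $L_2$, whose squares sum to $2\alpha(\alpha-4)\Uhom(\bar{x})$ --- the two computations agree term by term.
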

	
	We conclude this section by pointing out that no sufficient condition for detecting the linear or spectral stability has been found thus far. This question is addressed in a forthcoming paper
	\cite{BJP2}, where we are trying to establish in a precise way the stability properties of the relative equilibria by using some symplectic and variational techniques, mainly based on the Maslov
	index, index theorems and topological invariants.
	
	The following table of contents shows how the paper is organised.
	
	\tableofcontents

	\section{Description of the problem: setting and preliminaries} \label{sec:description}
	
	In this section we briefly outline the basic definitions and properties of simple mechanical systems with symmetry, as well as their reduction to the quotient space. 
	
	Consider the Euclidean plane $\R^2$ endowed with the usual inner product $\langle \cdot, \cdot \rangle$ and let $m_1,\dots, m_n$ be $n \geq 3$ positive real numbers which can be 
	thought of as masses.
	The configuration space of $n$ point particles with masses $m_i$, with $i \in \{1, \dotsc, n\}$, will therefore be a suitable subset $X \subseteq \R^{2n}$ (equipped with its Euclidean inner
	product, which we denote again by $\langle \cdot, \cdot \rangle$). For any position vector $q \= \trasp{(q_1, \dots, q_n)} \in \R^{2n}$, with $q_i \in \R^2$ (column vector) for
	every $i \in \{1, \dotsc, n\}$, we can define a norm in $\R^{2n}$ through the \emph{moment of inertia}:%
		\footnote{What we define here is actually the double of the moment of inertia: we drop the factor $1/2$ in order to make computations lighter in the following.}
		\[
			\I(q) \= \norm{q}_M^2 \= \langle Mq, q \rangle = \sum_{i = 1}^n m_i \abs{q_i}^2,
		\]
	where $M \in \Mat(2n, \R)$ is the diagonal \emph{mass matrix} $\diag(m_1 I_2, \dotsc, m_n I_2)$, $I_n$ is the $n \times n$ identity matrix and $\abs{q_i}$ denotes the Euclidean norm of
	$q_i$ in $\R^2$.
	
	A simple mechanical system of $n$ point particles on $X$ is described by a \emph{Lagrangian function} $\mathscr{L}: TX \to \R$ of the form
		\[
			\mathscr{L}(q, \dot{q}) \= \mathcal{K}(q, \dot{q}) + \mathcal{U}(q),
		\]
	where $\mathcal{K} : TX \to \R$ is the \emph{kinetic energy} of the system and $\mathcal{U} : X \to \R$ is its \emph{potential function}. This Lagrangian thus equals the difference between the
	kinetic energy and the potential energy ($-\mathcal{U}$); in our case we have $\mathcal{K}(q, \dot{q}) \= \frac{1}{2} \I(\dot{q})$.

	Using the mass matrix $M$, Newton's equations can be written as the following second-order system of
	ordinary differential equations on $X$:
		\begin{equation}\label{eq:Newton}
			M \ddot{q} = \nabla \mathcal{U}(q),
		\end{equation}
	which can of course be transformed into a first-order system as follows. Let us introduce the \emph{Hamiltonian function} $\mathscr{H} : T^*X \to \R$, defined by
		\[
			\mathscr{H}(q, p) \= \frac{1}{2}\langle M^{-1}\trasp{p}, \trasp{p} \rangle - \mathcal{U}(q).
		\]
	Here $p \= (p_1, \dotsc, p_n) \in \R^{2n}$, with $p_i \in \R^2$ (row vector) for all $i \in \{ 1, \dotsc, n \}$, is the linear momentum conjugate to $q$.
	The Hamiltonian system associated with \eqref{eq:Newton} is the first-order system of ordinary differential equations on the phase space $T^*X \cong X \times \R^{2n}$ given by
		\begin{equation}\label{eq:Hamilton}
			\begin{cases}
				\dot{q} = \partial_p \mathscr{H} =M^{-1} \trasp{p}  \\
				\trasp{\dot{p}} = -\partial_q \mathscr{H} = \nabla \mathcal{U}(q).
			\end{cases}
		\end{equation}
	We shall consider simple mechanical systems with an \emph{$\SO(2)$-symmetry}, meaning that the group $\SO(2)$ acts properly on $X$ through isometries that leave the potential function
	$\mathcal{U}$ unchanged. It follows that the Lagrangian and the Hamiltonian are $\SO(2)$-invariant under the natural lift of this action to $TX$ and to $T^*X$, respectively.

		\subsection{Relative equilibria} \label{subsec:releq}

	Among all the  solutions of Newton's Equations~\eqref{eq:Newton}, as already observed,  the simplest are represented by a special class of periodic solutions called \emph{relative
	equilibria}.
	
	In the following and throughout all this paper, the matrix
		\[
			J_{2n} \= \begin{pmatrix}
					0 & -I_n \\
					I_n & 0
				\end{pmatrix}
		\]
	will denote the complex structure in $\R^{2n}$, but it will always be written simply as $J$, its dimension being clear from the context.
	
	Let $e^{\omega J t} = \begin{pmatrix} \cos\omega t & -\sin\omega t \\ \sin\omega t & \cos\omega t \end{pmatrix}$ be the matrix representing the rotation in the plane with angular velocity
	$\omega$. In order to rewrite Hamilton's Equations~\eqref{eq:Hamilton} in a frame uniformly rotating about the origin with period $2\pi/\omega$, we employ the following symplectic change of
	coordinates:
		\begin{equation*}
			\begin{cases}
				x \= R(t)\, q\\
				\trasp{y} \= R(t)\, \trasp{p}
			\end{cases}
		\end{equation*}
	where $R(t)$ is the $2n \times 2n$ block-diagonal matrix $\diag_n(e^{\omega J t}, \dots, e^{\omega J t})$. Since a symplectic change of variables preserves the Hamiltonian structure, in
	these new coordinates System~\eqref{eq:Hamilton} is still Hamiltonian and transforms as follows:
		\begin{equation}\label{eq:newHamilton}
			\begin{cases}
				\dot{x} = \partial_y \hat{\mathscr{H}} = \omega Kx + M^{-1} \trasp{y}  \\
				\trasp{\dot{y}} = -\partial_x \hat{\mathscr{H}} = \nabla \mathcal{U}(x) + \omega K \trasp{y}
			\end{cases}
		\end{equation}
	where $K$ is the $2n \times 2n$ block-diagonal matrix $\diag_n(J, \dots, J)$ and $\hat{\mathscr{H}}$ is the new Hamiltonian function given by
		\begin{equation} \label{eq:augmHam}
			 \hat{\mathscr{H}}(x, y) \= \frac{1}{2} \langle M^{-1} \trasp{y}, \trasp{y} \rangle - \mathcal{U}(x) + \omega \langle K x, \trasp{y} \rangle.
		\end{equation}
	From the physical point of view, the term involving $K$ is due to the Coriolis force.

	An equilibrium for System~\eqref{eq:newHamilton} must satisfy the conditions
		\[
			\begin{cases}
				\omega K x + M^{-1} \trasp{y} = 0 \\
				\nabla \mathcal{U}(x) + \omega K \trasp{y} = 0,
			\end{cases}
		\]
	which, taking into account that $[K,M] = 0$ and that $K^2 = -I$, can be rewritten as
		\begin{equation}\label{eq:equilibrium2}
			\begin{cases}
				\trasp{y} = -\omega M K x \\
				M^{-1} \nabla \mathcal{U}(x) + \omega^2 x = 0.
			\end{cases}
		\end{equation}
	
	Setting now $\Xi \= \omega K$, it is easy to see that the Hamiltonian
	$\hat{\mathscr{H}}$ defined in \eqref{eq:augmHam} coincides with the \emph{augmented Hamiltonian function}
		\[
			\mathscr{H}_\Xi (x, y) \= \mathcal{K}_\Xi(x, y) - \mathcal{U}_\Xi(x),
		\]
	where
		\[
			\mathcal{K}_\Xi(x, y) \= \frac{1}{2} \norm{M^{-1}\trasp{y} + \Xi x}^2_M
		\]
	is the \emph{augmented kinetic energy} and
		\begin{equation} \label{eq:augpotential}
			\mathcal{U}_\Xi(x) \= \mathcal{U}(x) + \frac{1}{2} \norm{\Xi x}_M^2.
		\end{equation}
	is called the \emph{augmented potential function}. In terms of these augmented quantities, System~\eqref{eq:equilibrium2} becomes
		\[
			\begin{cases}
				\trasp{y} = - M\Xi x \\
				\nabla \mathcal{U}_\Xi(x) = 0
			\end{cases}
		\]
	and we have the following definition.
		
	\begin{defn}
		The point $(\bar{x}, \bar{y}) \in T^*X$ is a \emph{relative equilibrium} for Newton's Equations~\eqref{eq:Newton} with potential $\mathcal{U}$ if both the following conditions hold:
			\begin{enumerate}[1)]
				\item $\trasp{\bar{y}} = -M \Xi \bar{x}$;
				\item $\bar{x}$ is a critical point of the augmented potential function $\mathcal{U}_\Xi$.
			\end{enumerate}
	\end{defn}
	
	Let us now consider the autonomous Hamiltonian System~\eqref{eq:newHamilton} in $\R^{4n}$: by grouping variables into $z \= \trasp{(\trasp{x}, y)}$, it can be written as follows:
		\begin{equation} \label{eq:hamnuovevariabili}
			\dot{z}(t) = -J \nabla \hat{\mathscr{H}} \big( z(t) \big).
		\end{equation}
	Linearising it at the relative equilibrium $\bar{z} \= \trasp{(\trasp{\bar{x}}, \bar{y})}$, we obtain the linear autonomous Hamiltonian system
		\begin{equation} \label{eq:hamsyslinearized}
			\dot{z}(t) = -JB z(t),
		\end{equation}
	where $B$ is the constant $4n \times 4n$ symmetric matrix given by
		\begin{equation} \label{eq:generalB}
			B \= \begin{pmatrix}
				-D^2 \mathcal{U}(\bar{x}) & \trasp{\Xi} \\
				\Xi & M^{-1}
			\end{pmatrix}.
		\end{equation}

		\subsection{Linear and spectral stability for autonomous Hamiltonian systems} \label{subsec:linstabham}
	
	We now recall some basic definitions and well-known facts about the linear stability of autonomous Hamiltonian systems, starting with the definition of the symplectic group and its Lie algebra.
	The reader is invited to consult, for instance, \cite{Abbo} for more details.

	The \emph{(real) symplectic group} is the set
		\[
			\Sp(2n, \R) \= \Set{ S \in \Mat(2n, \R) | \trasp{S}JS = J}.
		\]
	Symplectic matrices correspond to symplectic automorphism of the standard symplectic space $(\R^{2n}, \Omega)$, where $\Omega$ is the standard symplectic form represented by $J$
	via the standard inner product of $\R^{2n}$, \ie $\Omega(u, v) \= \langle Ju, v\rangle$ for every $u, v \in \R^{2n}$.
	
	By differentiating the equation $\trasp{H}JH = J$ and evaluating it at the identity matrix, we find the characterising relation of the Hamiltonian matrices: the Lie algebra of the symplectic group
	is defined as
		\[
			\ssp(2n, \R) \= \Set{ H \in \Mat(2n, \R) | \trasp{H}J + JH = 0 },
		\]
	and its elements are called \emph{Hamiltonian} or \emph{infinitesimally symplectic}.
	
	\begin{rmk} \label{rmk:exp}
		Since $\Sp(2n, \R)$ is a matrix Lie group and $\ssp(2n, \R)$ is its Lie algebra, the exponential map $\exp : \ssp(2n, \R) \to \Sp(2n, \R)$ coincides with the usual matrix exponential, and
		therefore we have that $H$ is a Hamiltonian matrix if and only if $\exp(H)$ is symplectic. It follows that $\lambda \in \sigma(H)$ if and only if $e^\lambda \in \sigma\bigl( \exp(H) \bigr)$.
	\end{rmk}
	
	The next proposition recollects the symmetries of the spectra of Hamiltonian and symplectic matrices.
	
	\begin{prop} \label{prop:spectra}
		The characteristic polynomial of a symplectic matrix is a reciprocal polynomial. Thus if $\lambda$ is an eigenvalue of a real symplectic matrix, then so are $\lambda^{-1}$,
		$\overline{\lambda}$, $\overline{\lambda}^{-1}$.
		
		The characteristic polynomial of a Hamiltonian matrix is an even polynomial. Thus if $\lambda$ is an eigenvalue of a Hamiltonian matrix, then so are $-\lambda$, $\overline{\lambda}$,
		$-\overline{\lambda}$.
	\end{prop}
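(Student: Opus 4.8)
The plan is to prove the two halves by the same device: in each case I would convert the defining matrix identity into the statement that a matrix is \emph{similar} to a related one, read off the resulting symmetry of the characteristic polynomial, and then invoke the reality of the entries (so that the characteristic polynomial has real coefficients and the non-real spectrum comes in conjugate pairs). Throughout I use $J^{-1} = -J = \trasp{J}$ and $J^2 = -I$, as recorded in the text, together with the facts that $\det(-A) = \det A$ for a $2n \times 2n$ matrix $A$ and that a matrix and its transpose share the characteristic polynomial.

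For a symplectic $S$ I would start from $\trasp{S}JS = J$ and rearrange it into $S^{-1} = J^{-1}\trasp{S}J$, so that $S^{-1}$ is conjugate to $\trasp{S}$ and hence $\det(\lambda I - S^{-1}) = \det(\lambda I - \trasp{S}) = \det(\lambda I - S)$. Thus $S$ and $S^{-1}$ have the same characteristic polynomial $p(\lambda) = \det(\lambda I - S)$. Computing $\lambda^{2n}p(1/\lambda) = \det(I - \lambda S) = \det(-S)\det(\lambda I - S^{-1}) = \det(S)\,p(\lambda)$, where the second equality uses $-S(\lambda I - S^{-1}) = I - \lambda S$, then shows that $p$ is reciprocal as soon as we know $\det S = 1$. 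Because the spectrum of $S^{-1}$ is $\{\lambda^{-1} : \lambda \in \sigma(S)\}$, the equality $\sigma(S) = \sigma(S^{-1})$ yields $\lambda \in \sigma(S) \Rightarrow \lambda^{-1} \in \sigma(S)$; reality of $S$ gives $\overline{\lambda} \in \sigma(S)$, and combining the two produces $\overline{\lambda}^{-1}$.

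For a Hamiltonian $H$ I would start from $\trasp{H}J + JH = 0$, that is $\trasp{H} = -JHJ^{-1} = J(-H)J^{-1}$, so that $\trasp{H}$ is similar to $-H$; since $\trasp{H}$ and $H$ share the characteristic polynomial, so do $H$ and $-H$. Hence, with $q(\lambda) = \det(\lambda I - H)$, I obtain $q(-\lambda) = \det(-\lambda I - H) = \det(\lambda I + H) = \det\bigl(\lambda I - (-H)\bigr) = q(\lambda)$, so $q$ is even. Consequently $\lambda \in \sigma(H) \Rightarrow -\lambda \in \sigma(H)$; reality again gives $\overline{\lambda}$, and together they give $-\overline{\lambda}$.

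None of the steps is genuinely hard; the only point demanding a little care is the claim $\det S = 1$ for a real symplectic matrix, which is what upgrades the relation $\lambda^{2n}p(1/\lambda) = \det(S)\,p(\lambda)$ from mere anti-reciprocity to reciprocity. Taking determinants in $\trasp{S}JS = J$ gives only $(\det S)^2 = 1$, so I would either quote the standard fact that every real symplectic matrix has determinant $1$ (via the Pfaffian, or via the connectedness of $\Sp(2n, \R)$), or simply observe that the eigenvalue symmetries — which are all that the applications require — follow already from $\sigma(S) = \sigma(S^{-1})$ with no determinant computation at all. As a consistency check I note that the two halves are linked by Remark~\ref{rmk:exp}: the exponential carries the additive symmetry $\lambda \mapsto -\lambda$ of $\sigma(H)$ to the multiplicative symmetry $\mu \mapsto \mu^{-1}$ of $\sigma(\exp H)$, exactly as the two statements predict.
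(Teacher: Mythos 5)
Your proof is correct, and it is essentially the standard argument: the paper itself gives no proof of this proposition but simply cites \cite[Proposition~3.3.1]{MR2468466}, where the same conjugation identities $S^{-1} = J^{-1}\trasp{S}J$ and $\trasp{H} = J(-H)J^{-1}$ are the key steps. You were also right to flag that $(\det S)^2 = 1$ alone does not give $\det S = 1$, and your observation that the eigenvalue symmetries already follow from $\sigma(S) = \sigma(S^{-1})$ without any determinant computation is exactly the clean way around that point.
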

	
	\begin{proof}
		See \cite[Proposition~3.3.1]{MR2468466}.
	\end{proof}
	
	\begin{rmk} \label{rmk:spectra}
		It descends directly from Proposition~\ref{prop:spectra} that the spectrum of a Hamiltonian matrix $H$ is, in particular, symmetric with respect to the real axis of the complex
		plane. Moreover, $0$ has always even (possibly zero) algebraic multiplicity as a root of the characteristic polynomial of $H$.
	\end{rmk}
	
	We now present the definition of spectral and linear stability for Hamiltonian matrices, in view of the fact that these are the ones on which we shall focus in our analyses.
	
	\begin{defn}\label{def:stabsym}
		A Hamiltonian matrix $H \in \ssp(2n, \R)$ is said to be \emph{spectrally stable} if $\sigma(H) \subset i\R$, whereas it is \emph{linearly stable} if $\sigma(H) \subset i\R$ and in addition
		it is diagonalisable.
	\end{defn}
	
	This concept is easily adapted to symplectic matrices by using the exponential map, as explained in Remark~\ref{rmk:exp}, and by remembering that the imaginary axis of the complex plane
	is the Lie algebra of the unit circle $\U$ in the same plane (cf.~Remark~\ref{rmk:spectra}). Indeed, a symplectic matrix $S$ is said to be \emph{spectrally stable} if $\sigma(S) \subset \U$ and,
	as before, the property of \emph{linear stability} requires in addition the diagonalisability of $S$.
	
	A linear autonomous Hamiltonian system in $\R^{2n}$ has the form
		\begin{equation}\label{eq:Hamsys}
			 \dot{\zeta}(t) = JA \zeta(t),
		\end{equation}
	where $A$ is a symmetric matrix. Being it autonomous, its fundamental solution can be written in the explicit form
		\[
			\gamma(t) \= \exp(tJA).
		\]
	
	The definition of spectral and linear stability for this kind of systems is given in accord with Definition~\ref{def:stabsym}.
	
	\begin{defn}
		The linear autonomous Hamiltonian System~\eqref{eq:Hamsys} is \emph{spectrally} (resp.~\emph{linearly}) \emph{stable} if the symplectic matrix $\exp(JA)$ corresponding to its
		fundamental solution at time $t = 1$ is spectrally (resp.~linearly) stable. We say that System~\eqref{eq:Hamsys} is \emph{degenerate} if $0 \in \sigma(JA)$, or equivalently if
		$1 \in \sigma\bigl( \exp(JA) \bigr)$, and \emph{non-degenerate} otherwise.
	\end{defn}
	
	We conclude the subsection by reporting a criterion for linear stability of symplectic matrices, in order to complete our brief recollection of definitions and results on this topic. We also
	point out that we are not aware of any existing proof of this lemma. In the following, the symbol $\opnorm{\,\cdot\,}{\H}$ will denote the norm of a bounded linear operator from the Hilbert
	space $\H$ to itself.
	
	\begin{lem}
		A matrix $S \in \Sp(2n, \R)$ is linearly stable if and only if
			\[
				\sup_{m \in \N} \opnorm{S^m}{\R^{2n}} < +\infty.
			\]
	\end{lem}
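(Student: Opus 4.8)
The plan is to prove the two implications separately, unpacking ``linearly stable'' into the two requirements $\sigma(S) \subset \U$ and diagonalisability over $\C$, and tying both to the growth of the iterates $S^m$. Throughout I would work in the complexification $\C^{2n}$: this is harmless, since for a real matrix the operator norm induced by the Hermitian inner product coincides with the one induced by the Euclidean inner product (both equal the largest singular value), so that $\sup_{m} \opnorm{S^m}{\R^{2n}}$ is finite if and only if $\sup_{m} \opnorm{S^m}{\C^{2n}}$ is. Passing to $\C^{2n}$ lets me use eigenvectors and the Jordan canonical form freely even when $S$ has non-real eigenvalues.

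For the implication ``linearly stable $\Rightarrow$ bounded powers'' I would use diagonalisability to write $S = P D P^{-1}$ with $D = \diag(\lambda_1, \dotsc, \lambda_{2n})$ and $\abs{\lambda_j} = 1$ for every $j$ (this last being exactly $\sigma(S) \subset \U$). Then $S^m = P D^m P^{-1}$, and submultiplicativity of the operator norm gives $\opnorm{S^m}{\C^{2n}} \leq \opnorm{P}{\C^{2n}}\,\opnorm{D^m}{\C^{2n}}\,\opnorm{P^{-1}}{\C^{2n}}$. Since $D^m = \diag(\lambda_1^m, \dotsc, \lambda_{2n}^m)$ with all $\abs{\lambda_j^m} = 1$, one has $\opnorm{D^m}{\C^{2n}} = 1$ for every $m$, so the iterates are uniformly bounded by the condition number $\opnorm{P}{\C^{2n}}\,\opnorm{P^{-1}}{\C^{2n}}$.

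For the converse, assume $\sup_{m} \opnorm{S^m}{\C^{2n}} < +\infty$ and recover the two defining properties in turn. First, spectral containment: if some $\lambda \in \sigma(S)$ had $\abs{\lambda} > 1$, then applying $S^m$ to a corresponding eigenvector $v \in \C^{2n}$ would give $\opnorm{S^m}{\C^{2n}} \geq \abs{\lambda}^m \to +\infty$, a contradiction, so $\sigma(S)$ lies in the closed unit disc. Here the symplectic structure enters decisively: by Proposition~\ref{prop:spectra} the spectrum is invariant under $\lambda \mapsto \lambda^{-1}$, whence $\abs{\lambda} \leq 1$ and $\abs{\lambda^{-1}} \leq 1$ hold simultaneously for every eigenvalue, forcing $\abs{\lambda} = 1$ and thus $\sigma(S) \subset \U$. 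Second, diagonalisability: passing to the Jordan form $S = Q J Q^{-1}$ over $\C$, boundedness of $\opnorm{S^m}{\C^{2n}}$ is equivalent to that of $\opnorm{J^m}{\C^{2n}}$; writing a single Jordan block as $\lambda I + N$ with $N$ nilpotent, the expansion $J^m = \sum_{j \geq 0} \binom{m}{j} \lambda^{m-j} N^j$ shows that a block of size $\geq 2$ produces an entry of modulus $\binom{m}{1}\abs{\lambda}^{m-1} = m \to +\infty$ (using $\abs{\lambda} = 1$), again contradicting boundedness. Hence every Jordan block is $1 \times 1$, i.e.\ $S$ is diagonalisable, and together with $\sigma(S) \subset \U$ this yields linear stability.

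The two norm estimates are routine; the genuinely delicate point, which I would stress, is that power-boundedness alone does \emph{not} force diagonalisability in general, because eigenvalues strictly inside the unit disc may carry Jordan blocks whose powers decay. The hard part is therefore to see that the symplectic reciprocity of the spectrum is precisely what eliminates eigenvalues off the unit circle, after which the Jordan-block growth estimate upgrades semisimplicity on $\U$ to full diagonalisability. This is the step where the hypothesis $S \in \Sp(2n, \R)$ is indispensable.
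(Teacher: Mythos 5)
Your proof is correct and follows essentially the same route as the paper's: diagonalisation plus a condition-number bound for the forward implication, and the reciprocity of the symplectic spectrum together with the growth of powers of a nontrivial Jordan block for the converse. If anything, your ordering --- first forcing $\sigma(S) \subset \U$ via the symmetry $\lambda \mapsto \lambda^{-1}$, and only then applying the Jordan-block estimate with $\abs{\lambda} = 1$ --- is more careful than the paper's parenthetical claim that the block's norm diverges \emph{regardless} of whether $\lambda \in \U$, which is false for $\abs{\lambda} < 1$ and is exactly the pitfall you flag in your closing paragraph.
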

 	
 	\begin{proof}
 		If $S$ is linearly stable, then in particular it is similar to a diagonal matrix $D$ through an invertible matrix $P$, so that we have
			\[
				\begin{split}
					\sup_{m \in \N} \opnorm{S^m}{\R^{2n}} & = \sup_{m \in \N} \opnorm{P^{-1}D^mP}{\R^{2n}} \\
													& \leq  \opnorm{P^{-1}}{\R^{2n}} \opnorm{P}{\R^{2n}} \sup_{m \in \N} \opnorm{D^m}{\R^{2n}} \\
													& = \opnorm{P^{-1}}{\R^{2n}} \opnorm{P}{\R^{2n}} < +\infty,
				\end{split}
			\]
		where the last equality holds true because all the eigenvalues of $S$ (and hence those of $D$) lie on the unit circle.
		
		Vice versa, if $S$ is not linearly stable then it is not spectrally stable or it is not diagonalisable (or both). If it is spectrally unstable there exists, by definition, at least one eigenvalue
		$\lambda \notin \U$, and we can assume, by the properties of the spectrum of symplectic matrices, that $\abs{\lambda} > 1$. Writing $S$ in its Jordan form (possibly diagonal) and
		computing $S^m$ yields on the diagonal a power $\lambda^m$, whose modulus diverges as $m \to +\infty$. Hence $\opnorm{S^m}{\R^{2n}} \to +\infty$.
		If $S$ is not diagonalisable, then there exists at least one Jordan block of size $k \geq 2$ (say) relative to the eigenvalue $\lambda$. Its $m$-th power has the form
			\[
				\begin{bmatrix}
					\lambda^m & m\lambda^{m - 1} & 0 & \hdotsfor{2} & 0 \\
					0 & \lambda^m & m\lambda^{m - 1} & 0 & \ldots & 0 \\
					\hdotsfor{6} \\
					\hdotsfor{6} \\
					0 & \hdotsfor{2} & 0 & \lambda^m & m\lambda^{m - 1} \\
					0 & \hdotsfor{3} & 0 & \lambda^m
				\end{bmatrix},
			\]
		and therefore even in this case (regardless of the fact that $\lambda \in \U$ or not) the norm of $S^m$ tends to $+\infty$ as $m$ goes to $+\infty$.
 	\end{proof}

	\section{Auxiliary results}  \label{sec:auxiliary}
	
	In this section we present the lemmata and the propositions needed in the proof of the main results in Section~\ref{sec:main}.
	We first introduce some notation and definitions; for further properties we refer to Appendix~\ref{app:A}.

		\subsection{Notation and definitions} \label{subsec:spfl}
	
	Let $\H$ be, throughout all this paper, a finite-dimensional complex Hilbert space (we shall specify its dimension when needed).
	We denote by $\B(\H)$ the Banach algebra of all (bounded) linear operators $T : \H \to \H$ and by $\Bsa(\H)$ the subset of all (bounded) linear self-adjoint operators on $\H$. For a
	subset $\A \subseteq \B(\H)$, the writing $\G\A$ indicates the set of all invertible elements of $\A$.
	
	\begin{defn}
		For any $T \in \Bsa(\H)$, we define its \emph{index} $n^-(T)$, its \emph{nullity} $\nu(T)$ and its \emph{coindex} $n^+(T)$ as the numbers of its negative, null and positive eigenvalues,
		respectively. Its \emph{extended index} and the \emph{extended coindex} are defined as
			\[
				\nmext(T) \= n^-(T) + \nu(T), \qquad \npext(T) \= n^+(T) + \nu(T).
			\]
		The \emph{signature} $\sgn(T)$ of $T$ is the difference between its coindex and its index:
		 	\[
				\sgn(T) \= n^+(T) - n^-(T).
			\]
	\end{defn}
	
	\begin{rmk}
		We shall refer to the index $n^-(T)$ of a self-adjoint operator $T \in \Bsa(\H)$ also as its \emph{Morse index}, which will be denoted by $\iMor(T)$.
	\end{rmk}
	
	\begin{defn}
		Let $X$ be a topological space, $Y \subseteq X$ a subspace and $a, b \in \R$, with $a < b$. We denote by $\Omega(X, Y)$ the set of all continuous paths $\gamma : [a, b] \to X$ with
		endpoints in $Y$. Instead of $\Omega(X, X)$ we simply write $\Omega(X)$. Two paths $\gamma, \delta \in \Omega(X,Y)$ are said to be \emph{(free) homotopic} if there is a continuous
		map $F : [0,1] \times [a, b] \to X$ which satisfies the following properties:
			\begin{enumerate}[i)]
				\item $F(0,\cdot) = \gamma$, $F(1,\cdot) = \delta$;
				\item $F(s, a) \in Y$, $F(s, b) \in Y$ for all $s \in [0,1]$.
			\end{enumerate}
		The set of homotopy classes in this sense is denoted by $\tilde{\pi}_1(X,Y)$.
	\end{defn}
	
	\begin{rmk}
		Note that the endpoints are not fixed along the homotopy; however, they are allowed to move only within $Y$.
	\end{rmk}
	
	Taking into account \cite[Corollary~3.7]{Les05}, we are entitled to give the following definition:
	  
	\begin{defn}\label{def:spectralflow}
		Let $a, b \in \R$, with $a < b$, and let $T \in \Omega \bigl( \Bsa(\H), \G\Bsa(\H) \bigr)$. We define its \emph{spectral flow} on the interval $[a, b]$ as:
			\[
				\spfl \big( T, [a, b] \big) \= \npext \big( T(b) \big) - \npext \big( T(a) \big).
	 		\]
	\end{defn}
	
	\begin{rmk} \label{rmk:spfl}
		It is worthwhile noting that
			\[
				\spfl \big( T, [a, b] \big) = n^- \big( T(a) \big) - n^- \big( T(b) \big).
			\]
	\end{rmk}
	
	We now switch to introduce the key notion of \emph{crossing}.
	
	\begin{defn} \label{def:crossing}
		Let $a, b \in \R$, with $a < b$, and let $T \in \Cscr^1\bigl( [a, b], \Bsa(\H) \bigr)$. A \emph{crossing instant} (or simply a \emph{crossing}) for the path $T$ is a number $t_* \in [a, b]$ for
		which $T(t_*)$ is not injective. We define the \emph{crossing operator} (also called \emph{crossing form}) $\Gamma(T, t_*) : \ker T(t_*) \to \ker T(t_*)$ of $T$ with respect to the crossing
		$t_*$ by
			\begin{equation} \label{eq:defcrossform}
				\Gamma(T, t_*) \= Q\dot{T}(t_*)Q \bigr|_{\ker T(t_*)},
			\end{equation}
		where $Q : \H \to \H$ denotes the orthogonal projection onto the kernel of $T(t_*)$. A crossing $t_*$ is called \emph{regular} if the crossing form $\Gamma(T, t_*)$ is non-degenerate.
		We say that the path $T$ is \emph{regular} if each crossing for $T$ is regular.
	\end{defn}
	
	\begin{rmk} \label{rmk:crossform}
		The computation of the spectral flow of a path of operators involves the signature of the crossing form. We point out here that we actually refer to the signature of the \emph{quadratic
		form} associated with the linear map defined in \eqref{eq:defcrossform}, that is, we make the following implicit identification. Given an endomorphism $\Gamma : V \to V$ on a vector
		space $V$, it is associated in a natural way with a bilinear form $\mathcal{B}_\Gamma : V \times V^* \to \R$ defined by
			\[
				\mathcal{B}_\Gamma(u, f) \= f(\Gamma u),
			\]
		where $f \in V^*$ is an element of the dual space $V^*$ of $V$. Since $V^* \cong V$ one can then define
			\[
				\mathcal{B}_\Gamma(u, v) \= \trasp{v} \Gamma u.
			\]
		The quadratic form associated with $\Gamma$ is thus the quadratic form associated with $\mathcal{B}_\Gamma$. This is the justification for the abuse of language and notation that the
		reader will encounter throughout the paper.
	\end{rmk}
	
	As last piece of information, we point out that in the rest of the paper we shall denote the matrix $iJ$ by $G$.

		\subsection{Relationships among linear stability, spectral flow and partial signatures}
	
	Here are the properties and facts that we shall exploit later to prove our main theorem. In this subsection we identify the Hilbert space $\H$ with $\C^{4n}$ and consider the affine path
	$D : [0, +\infty) \to \Bsa(\C^{4n})$ defined by
		\[
			D(t) \= A + tG,
		\]
	where $A \in \Bsa(\C^{4n})$ is a real symmetric matrix (hence $JA$ is Hamiltonian). Without different indication, it will be understood that $\H$, $A$ and $D$ are as defined above.
	
	Thanks to the identification $\H = \C^{4n}$, we implicitly fix the canonical basis of $\C^{4n}$ and therefore every operator in $\Bsa(\H)$ is represented by a $4n \times 4n$ complex
	Hermitian matrix.
	
	We explicitly note that the spectral flow does not depend on the particular inner product chosen but only on the associated quadratic form (see \cite{arxiv}).
	
	\begin{lem}
		Assume that $JA$ is linearly stable.
		
		Then if $A$ is singular there exist $\eps > 0$ and $T > \eps$ such that 
			\begin{enumerate}[(i)]
				\item The instant $t_* = 0$ is the only crossing for the path $D$ on $[0, \eps]$;
				\item $\spfl \bigl( D, [\eps, T_1] \bigr) = \spfl \bigl(D, [\eps, T_2] \bigr)$ for all $T_1, T_2 \geq T$;
				\item $\spfl \bigl( D, [\eps, T] \bigr)$ is an even number.
			\end{enumerate}
		If $A$ is non-singular there exists $T > 0$ such that
			\begin{enumerate}[(i)]
				\item $\spfl \bigl( D, [0, T_1] \bigr) = \spfl \bigl(D, [0, T_2] \bigr)$ for all $T_1, T_2 \geq T$;
				\item $\spfl \bigl( D, [0, T] \bigr)$ is an even number.
			\end{enumerate}
	\end{lem}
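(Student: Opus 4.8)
The plan is to convert the crossings of $D$ into spectral data for the Hamiltonian matrix $JA$ and then to read off both the location of the crossings and the parities from the symmetry of $\sigma(JA)$. First I would observe that $t_*\in[0,+\infty)$ is a crossing of $D$ precisely when $D(t_*)=A+t_*\,iJ$ fails to be injective; multiplying the relation $(A+t_*\,iJ)v=0$ by $J$ and using $J^2=-I$ shows that this is equivalent to $JA\,v=i t_*\,v$ for some $v\neq 0$, that is, to $i t_*\in\sigma(JA)$. Since $JA$ is linearly stable we have $\sigma(JA)\subset i\R$ and $JA$ is diagonalisable, while Proposition~\ref{prop:spectra} (or simply the reality of $JA$) makes $\sigma(JA)$ invariant under complex conjugation. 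Hence the crossings of $D$ on $[0,+\infty)$ are exactly the finitely many numbers $0\le\beta_1<\dots<\beta_k$ with $i\beta_j\in\sigma(JA)$, and $t_*=0$ is a crossing if and only if $A$ is singular. Choosing $\eps$ strictly below the smallest positive crossing isolates the origin on $[0,\eps]$, giving statement~(i) in the singular case; choosing $T>\beta_k$ makes $D(t)$ invertible for all $t>\beta_k$, so the path has no crossings on $[T_1,T_2]$ for $T_1,T_2\ge T$ and, by the concatenation property of the spectral flow, the stabilisation statements follow.

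For the parity claims I would use Remark~\ref{rmk:spfl}, which on an interval with invertible endpoints reads $\spfl(D,[a,b])=n^-(D(a))-n^-(D(b))$. Because $G=iJ$ has eigenvalues $\pm1$, each of multiplicity $2n$, the matrix $A+TG$ has, for $T$ large, the same inertia as $TG$, so $n^-(D(T))=n^-(G)=2n$. The relevant spectral flow therefore equals $n^-(A)-2n$ in the non-singular case and $n^-(D(\eps))-2n$ in the singular one; since $2n$ is even, everything reduces to the evenness of $n^-$ of an invertible operator of the family.

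The evenness of $n^-(A)$ I would deduce from the symplectic normal form: as $JA$ is infinitesimally symplectic with semisimple, purely imaginary spectrum, there is a real symplectic $S$ for which $\trasp S A S$ is block-diagonal, with one block attached to each conjugate pair $\{i\mu,-i\mu\}$ and, if $A$ is singular, a block supported on $\ker A$. Each pair-block is congruent to a form in which every Krein sign occurs twice, so it contributes an even amount to the index, while the kernel block contributes nothing to $n^-$; by Sylvester's law of inertia $n^-(A)=n^-(\trasp S A S)$ is even, which settles the non-singular case~(ii). In the singular case the crossing at the origin is regular: its crossing form (Definition~\ref{def:crossing}) is $\Gamma(D,0)=G|_{\ker A}$, and writing $w=u+iv$ one computes $\langle iJw,w\rangle=\pm2\,\Omega(u,v)$, so $\Gamma(D,0)$ is (a multiple of) the symplectic pairing of $\ker A$ with itself. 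This is non-degenerate because $\ker A=\ker(JA)$ is a symplectic subspace (semisimplicity of the eigenvalue $0$), and a first-order perturbation argument then gives $n^-(D(\eps))=n^-(A)+n^-(\Gamma(D,0))$ for small $\eps>0$.

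The hard part is precisely this last parity count: locating the crossings and stabilising the flow are routine once the crossings are identified with $\sigma(JA)$, but showing that $n^-(A)$ together with the contribution $n^-(\Gamma(D,0))=\tfrac12\dim\ker A$ of the degenerate crossing at the origin adds up to an even number is where the structure of the problem must be exploited. This is exactly the place where the conjugation symmetry of $\sigma(JA)$ pairing $i\mu$ with $-i\mu$, the symplectic nature of $\ker A$, and --- for the matrix $A=B$ of \eqref{eq:generalB} --- the positive-definite block $M^{-1}$ all enter, so I expect the careful bookkeeping of this combination, rather than any single estimate, to be the crux of the proof.
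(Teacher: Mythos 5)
Your identification of the crossings of $D$ with the points of $\sigma(JA)\cap i[0,+\infty)$, the stabilisation of the spectral flow past the largest crossing, and the entire non-singular case are correct and run parallel to the paper, which funnels everything through Proposition~\ref{prop:MainA} and Corollary~\ref{cor:MainA}. Your route to the evenness of $n^-(A)$ --- a symplectic normal form for a semisimple infinitesimally symplectic matrix with purely imaginary spectrum, each Krein block contributing $0$ or $2$ to the index --- differs from the paper's, which instead sums the signatures of the (non-degenerate) crossing forms and uses $\sgn\equiv\dim\ \bmod\ 2$ on each eigenspace; both are valid, and yours has the advantage of making the evenness of $n^-(A)$ a standalone fact rather than a by-product of the flow computation.

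The gap is exactly where you locate it, and it cannot be closed. Your (correct) bookkeeping gives, in the singular case,
\[
\spfl\bigl(D,[\eps,T]\bigr)=n^-\bigl(D(\eps)\bigr)-n^-\bigl(D(T)\bigr)=n^-(A)+\tfrac{1}{2}\,\nu(A)-2n,
\]
so, since $n^-(A)$ is even, clause (iii) is equivalent to $\nu(A)\equiv 0 \pmod 4$, which linear stability does not provide. Indeed the clause is false as stated: with $4n=4$ take $A=\diag(1,0,1,0)$; then $JA$ is linearly stable with $\sigma(JA)=\{0,0,\pm i\}$ and $\nu(A)=2$, the only positive crossing is $t_*=1$ with one-dimensional kernel spanned by $v=\trasp{(1,0,-i,0)}$ and crossing form $\langle Gv,v\rangle=-2$, whence $\spfl\bigl(D,[\eps,T]\bigr)=-1$. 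The paper's own proof of (T3) in Proposition~\ref{prop:MainA} contains the corresponding slip: the count $\sum_{\lambda}\dim E_\lambda$ over $\lambda\in\sigma(-B^{-1}A)\cap i[\eps,+\infty)$ equals $2n-\nu(A)/2$, not $2n-\nu(A)$. The downstream results are unharmed, because the proof of Theorem~\ref{prop:mainB} uses the two congruences $\spfl\equiv\frac{1}{2}\dim\H_0$ and $\spfl\equiv n^-(A)+\frac{1}{2}\dim\H_0 \pmod 2$ and only their difference, so $n^-(A)\equiv 0$ survives even though clause (iii) of this lemma does not. In short: your proof of (i), (ii) and of the non-singular case is complete, and your suspicion that the remaining parity requires extra structure is the right instinct --- no amount of bookkeeping will supply it, because the assertion is wrong in the singular case.
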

	
	\begin{proof}
		Since $A$ is symmetric, the matrix $JA$ is Hamiltonian. Therefore its spectrum is symmetric with respect to the real axis of the complex plane and $\ker A$ (which is equal to $\ker JA$
		because $J$ is an isomorphism) is even-dimensional, being $JA$ diagonalisable. Furthermore, due to the Krein properties of $G$ (see Subsection~\ref{subsec:Krein}), the crossing form
		$Q_\lambda G Q_\lambda|_{E_\lambda}$ is always non-degenerate on each eigenspace $E_\lambda$.
		
		Hence the hypotheses of Proposition~\ref{prop:MainA} or of Corollary~\ref{cor:MainA} (depending whether $A$ is invertible or not) are fulfilled and this proof reduces to the
		corresponding one in Appendix~\ref{app:A}.
	\end{proof}
	
	\begin{prop} \label{thm:sgnpartial}
		Assume that $t_* > 0$ is an isolated (possibly non-regular) crossing instant for the path $D$. Then, for $\eps > 0$ small enough, 
			\[
				\spfl \bigl( D, [t_* - \eps, t_* + \eps] \bigr) = \sgn \mathcal{B}_1,
			\]
		 where
			\[
				\mathcal{B}_1 \= \langle G\, \cdot, \cdot \rangle \bigr|_{\H_{t_*}}
			\]
		and $\H_{t_*}$ is the generalised eigenspace given by
			\[
				\H_{t_*} \= \bigcup_{j=1}^{4n} \ker (GA+ t_* I)^j.
			\]
	\end{prop}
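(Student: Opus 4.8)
The plan is to exploit the affine structure of $D$ together with the Krein (indefinite inner-product) geometry of $G$. Since $G = iJ$ satisfies $G^2 = I$, the matrix $L \= GA$ is well defined and $D(t) = G(L + tI)$; moreover $L$ is self-adjoint with respect to the (possibly indefinite) Hermitian form $[u,v] \= \langle Gu, v\rangle$, because $\langle GLu, v\rangle = \langle Au, v\rangle = \langle Gu, Lv\rangle$. A crossing of $D$ at $t_*$ is thus equivalent to $-t_* \in \sigma(L)$, and $\H_{t_*}$ is exactly the root space (generalised eigenspace) of $L$ at the eigenvalue $-t_*$. As the spectrum of a matrix is finite I may fix $\eps > 0$ so small that $t_*$ is the only crossing in $[t_* - \eps, t_* + \eps]$ and the endpoints are non-degenerate; by Remark~\ref{rmk:spfl} it then suffices to compute $n^-\bigl(D(t_* - \eps)\bigr) - n^-\bigl(D(t_* + \eps)\bigr)$.

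First I would reduce the computation to $\H_{t_*}$. Since $-t_*$ is a \emph{real} eigenvalue of the $[\cdot,\cdot]$-self-adjoint operator $L$, the structure theory of self-adjoint operators in indefinite inner-product spaces (the Krein theory of Subsection~\ref{subsec:Krein} and Appendix~\ref{app:A}) guarantees that $\H_{t_*}$ is $[\cdot,\cdot]$-nondegenerate and $[\cdot,\cdot]$-orthogonal to the sum $W$ of the root spaces at the remaining eigenvalues, yielding an $L$-invariant, $G$-orthogonal splitting $\C^{4n} = \H_{t_*} \oplus W$. The crucial observation is that the Hermitian form $\langle D(t)\,\cdot\,,\cdot\rangle = [(L + tI)\,\cdot\,,\cdot]$ has no cross terms for this splitting: if $u \in \H_{t_*}$ and $w \in W$ then $(L+tI)u \in \H_{t_*}$ and $[(L+tI)u, w] = 0$, and symmetrically. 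Hence $n^-\bigl(D(t)\bigr) = n^-\bigl(D(t)|_{\H_{t_*}}\bigr) + n^-\bigl(D(t)|_W\bigr)$. On $W$ the operator $L + tI$ remains invertible for $t$ near $t_*$, so $D(t)|_W$ is non-degenerate across the whole interval and its index is constant; it contributes nothing, and $\spfl\bigl(D,[t_*-\eps,t_*+\eps]\bigr)$ equals the jump of $n^-$ of the restricted form on $\H_{t_*}$.

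On $\H_{t_*}$ the operator $N \= (L + t_* I)|_{\H_{t_*}}$ is nilpotent and $\mathcal{B}_1$-self-adjoint, and writing $s = t - t_*$ the restricted form becomes $\mathcal{B}_1\bigl((N + sI)\,\cdot\,,\cdot\bigr)$ with $\mathcal{B}_1$ nondegenerate. I would then invoke the canonical form for such a pair: $\H_{t_*}$ splits into $\mathcal{B}_1$-orthogonal, $N$-invariant blocks on each of which $N$ is a single nilpotent Jordan block of some size $k_\ell$ and $\mathcal{B}_1$ equals $\epsilon_\ell$ times the reversal (anti-diagonal) form, with $\epsilon_\ell \in \{\pm 1\}$. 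A direct computation on one block shows that, as $s$ passes from $-\eps$ to $+\eps$, the unique eigenvalue that vanishes at $s = 0$ behaves like $c\,s^{k_\ell}$, changing sign precisely when $k_\ell$ is odd; thus the block contributes $\epsilon_\ell$ to the jump of $n^-$ when $k_\ell$ is odd and $0$ when $k_\ell$ is even. Since the signature of $\epsilon_\ell$ times the $k_\ell \times k_\ell$ reversal form is $\epsilon_\ell$ for $k_\ell$ odd and $0$ for $k_\ell$ even, each block's contribution equals its own $\mathcal{B}_1$-signature; summing over the blocks gives $\spfl\bigl(D,[t_*-\eps,t_*+\eps]\bigr) = \sgn \mathcal{B}_1$.

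The hard part will be the degenerate (non-regular) case. When the crossing is regular, $N = 0$ on $\H_{t_*}$, all blocks have size one, and the statement collapses to the usual ``signature of the crossing form'' formula; the genuine work is in the higher-dimensional Jordan blocks, where the naive crossing form on $\ker D(t_*) = \ker(L + t_* I)$ is degenerate and one is forced to pass to the full root space $\H_{t_*}$. Establishing the canonical form of $(\mathcal{B}_1, N)$ and, above all, the vanishing of the even-size blocks (the cancellation encoded by the parity of $k_\ell$) is the delicate step; I would either quote the partial-signatures machinery of Appendix~\ref{app:A} for it or verify it through the explicit block matrices, using that the block determinant is a nonzero multiple of $s^{k_\ell}$ to fix the parity and the anti-diagonal normal form to fix the sign.
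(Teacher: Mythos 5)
Your argument is correct, but it follows a genuinely different route from the paper's. The paper's proof is a two-line reduction: it factors $D(t) = t\bigl(\tfrac{1}{t}A + G\bigr)$, uses invariance of the spectral flow under multiplication by a positive function and the orientation-reversing change of variable $s = 1/t$, and then invokes Proposition~\ref{thm:cruciale} (itself quoted from the partial-signatures reference) as a black box; the sign flip coming from $s = 1/t$ converts the $-\sgn\mathcal{B}_1$ of that proposition into the $+\sgn\mathcal{B}_1$ of the statement. You instead reprove the local formula from first principles: writing $D(t) = G(L+tI)$ with $L = GA$ self-adjoint for the Krein form $\langle G\,\cdot,\cdot\rangle$, you localise the spectral flow onto the root space $\H_{t_*}$ via the $G$-orthogonal, $L$-invariant splitting $\C^{4n} = \H_{t_*}\oplus W$ (correct: the cross terms of $\langle D(t)\,\cdot,\cdot\rangle$ vanish and the $W$-block stays nondegenerate near $t_*$), and then compute block by block using the canonical form of the pair (nondegenerate Hermitian form, nilpotent self-adjoint operator). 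What your route buys is transparency and self-containedness --- it exhibits exactly why even-sized Jordan blocks contribute nothing and odd-sized ones contribute their sign, which is precisely the content of the first partial signature --- at the cost of importing the Gohberg--Lancaster--Rodman canonical form (with its sign characteristic), which the paper never develops. The one step you should not leave implicit is pinning down the sign of the leading coefficient $c$ in the eigenvalue asymptotics $\lambda(s)\sim c\,s^{k_\ell}$ on an odd block: the determinant identity $\det\bigl(\eps_\ell Z(N+sI)\bigr) = \eps_\ell^{k_\ell}(-1)^{k_\ell(k_\ell-1)/2}s^{k_\ell}$ fixes only the parity, and you must divide by the product of the $k_\ell - 1$ nonvanishing eigenvalues of $\eps_\ell ZN$ at $s=0$ to conclude $c$ has the sign $\eps_\ell$; you flag this honestly and it does work out, so there is no gap, only a computation to be written down.
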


	\begin{proof}
		We observe that for $t \in(0, +\infty)$ 
			\[
				D(t) = t \biggl( \frac{1}{t}A + G \biggr) \eq t\, \tilde{D} \biggl( \frac{1}{t} \biggr).
			\]
		Clearly, the spectral flow is invariant by multiplication of  a path for a positive real-analytic function:
			\[
				\spfl \bigl( D, [t_* - \eps, t_* + \eps] \bigr) = \spfl \biggl( \tilde{D}, \biggl[ \frac{1}{t_*} - \eps, \frac{1}{t_*} + \eps \biggr] \biggr).
			\]
		Using now Proposition~\ref{thm:cruciale}, with $C \= G$ and $s \= \frac{1}{t}$, we obtain the thesis (observe that the difference in sign to the local contribution to the spectral flow is due to the change of variable $s \= \frac{1}{t}$).
	\end{proof}
	
	We now prove the main result of this section by means of the theory of partial signatures (see Subsection~\ref{subsec:parsgn}).
	
	\begin{thm} \label{prop:mainB}
		If $JA$ is spectrally stable, then $n^-(A)$ is even.
	\end{thm}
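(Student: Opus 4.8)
The plan is to recover $n^-(A)$ from the spectral flow of the path $D(t)=A+tG$ and then to show that this flow is even. First I would pin down the two endpoints. Since $G=iJ$ and $J=J_{4n}$ has $\pm i$ as eigenvalues, each of multiplicity $2n$, the Hermitian matrix $G$ has eigenvalues $\pm 1$, again each of multiplicity $2n$, so $n^-(G)=2n$. Writing $D(T)=T\bigl(\tfrac1T A+G\bigr)$ and letting $T\to+\infty$ gives $n^-\bigl(D(T)\bigr)=n^-(G)=2n$ for every large $T$. By Remark~\ref{rmk:spfl},
\[
	\spfl\bigl(D,[0,T]\bigr)=n^-(A)-n^-\bigl(D(T)\bigr)=n^-(A)-2n
\]
in the non-singular case, and $\spfl\bigl(D,[\eps,T]\bigr)=n^-\bigl(D(\eps)\bigr)-2n$ once the crossing at the origin has been isolated in the singular case. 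Because $2n$ is even, the whole statement reduces to proving that the relevant spectral flow of $D$ is an even integer.

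Next I would bring in the stability hypothesis through the Krein geometry of $G$. A one-line computation using $G^2=I$ gives $\langle GAu,v\rangle=\langle Au,v\rangle=\langle Gu,GAv\rangle$, i.e.\ $GA$ is self-adjoint for the indefinite Hermitian form $[u,v]=\langle Gu,v\rangle$; moreover $GA=iJA$ has real spectrum precisely because $\sigma(JA)\subset i\R$. Hence every crossing of $D$ occurs at a real instant $t_*=-\mu$ with $\mu\in\sigma(GA)$, and by Proposition~\ref{thm:sgnpartial} the local contribution there is $\sgn\mathcal B_1$, the signature of $[\cdot\,,\cdot]$ on the generalised eigenspace $\H_{t_*}$. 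The conceptual reason for evenness is that $JA$ is real, so $\overline{GA}=-GA$ and complex conjugation carries $\H_{t_*}$ onto the generalised eigenspace at $-t_*$; combined with Proposition~\ref{prop:spectra} this realises the eigenvalues of $JA$ in conjugate pairs $\pm i\,t_*$ and makes the whole crossing configuration symmetric about the origin. This symmetry, plus the fact that the Krein crossing forms $Q_\lambda G Q_\lambda|_{E_\lambda}$ are non-degenerate on the eigenspaces, is exactly what the Lemma proved above packages into items (ii)–(iii); I would simply invoke it to deduce that the spectral flow of $D$ is even, whence $n^-(A)$ is even.

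The step I expect to be the real obstacle is the degenerate case, $A$ singular, where $t_*=0$ is itself a crossing and the pairing $\mu\leftrightarrow-\mu$ collapses onto the self-conjugate space $\ker A=\ker JA$. Here the delicate bookkeeping is that passing from $n^-(A)=n^-\bigl(D(0)\bigr)$ to $n^-\bigl(D(\eps)\bigr)$ changes the index by the number of null directions of $A$ that turn negative, i.e.\ by the index of the crossing form $Q_0 G Q_0|_{\ker A}$, so one must control this quantity modulo $2$. The two facts I would combine are that $\ker A$ is even-dimensional (Remark~\ref{rmk:spectra}, since $0$ is a root of even multiplicity of the characteristic polynomial of the Hamiltonian $JA$) and that the crossing form at the origin is non-degenerate with balanced signature, by the Krein properties of $G$ (Subsection~\ref{subsec:Krein}); the latter is what allows one to separate the origin from the remaining crossings (item (i) of the Lemma) and to treat the non-regular intersection at $0$ via the theory of partial signatures (Subsection~\ref{subsec:parsgn}). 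Carrying this parity analysis at the origin through carefully is, I expect, the technical heart of the argument; once it is done, the spectral flow of $D$ on $[0,T]$ (resp.\ $[\eps,T]$) is even, and therefore $n^-(A)=\spfl+2n$ is even, as required.
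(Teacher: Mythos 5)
Your overall architecture is the same as the paper's: the affine path $D(t)=A+tG$, the endpoint computation $n^-\bigl(D(T)\bigr)=n^-(G)=2n$ for large $T$, Proposition~\ref{thm:sgnpartial} together with the non-degeneracy of the Krein form on generalised eigenspaces for the strictly positive crossings, and a separate treatment of $t=0$. In the non-singular case your reduction is complete and correct: each local contribution at a crossing $t_*>0$ is congruent to $\dim\H_{t_*}$ modulo $2$, the conjugate-pair symmetry gives $\sum_{t_*>0}\dim\H_{t_*}=2n$, hence $\spfl\bigl(D,[0,T]\bigr)$ is even and so is $n^-(A)=\spfl\bigl(D,[0,T]\bigr)+2n$.

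The singular case, which you rightly call the heart of the matter, is where the proposal breaks, and on three specific points. First, the reduction ``it suffices to show the spectral flow is even'' is false there: one only gets $\spfl\bigl(D,[\eps,T]\bigr)\equiv\sum_{t_*>0}\dim\H_{t_*}=2n-\tfrac12\dim\H_0\equiv\tfrac12\dim\H_0\pmod 2$, and $\tfrac12\dim\H_0$ need not be even (e.g.\ a single $2\times 2$ nilpotent Jordan block of $JA$ at $0$, which is perfectly compatible with spectral stability). Second, your claim that the crossing form at the origin, $Q_0GQ_0|_{\ker A}$, is non-degenerate ``by the Krein properties of $G$'' is wrong under mere spectral stability: the Krein form is non-degenerate on the \emph{generalised} eigenspace $\H_0$, not on $\ker JA$, and the kernel direction of every Jordan block of size $\geq 2$ is isotropic for it. Consequently the jump $n^-\bigl(D(\eps)\bigr)-n^-(A)$ is \emph{not} the index of that crossing form, and invoking the Lemma of Subsection~3.2 is not legitimate, since its hypothesis is linear stability (diagonalisability), which is exactly what Theorem~\ref{prop:mainB} does not assume. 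Third, your closing identity $n^-(A)=\spfl+2n$ holds only when $0$ is not a crossing. The paper's actual mechanism in the singular case is a cancellation, not an evenness statement: it establishes $n^-\bigl(D(\eps)\bigr)=n^-(A)+\tfrac12\dim\H_0$ (the null eigenvalues of $D(0)$ leave $0$ splitting evenly, because the Krein form on $\H_0$ is non-degenerate with zero signature) and, separately, $\spfl\bigl(D,[\eps,T]\bigr)=n^-\bigl(D(\eps)\bigr)-2n\equiv\tfrac12\dim\H_0\pmod 2$; subtracting the two congruences kills the $\tfrac12\dim\H_0$ terms and leaves $n^-(A)\equiv 0$. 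You have named all the right ingredients, but the statements you make about the origin would steer the argument into claims that are false precisely in the non-diagonalisable situation the theorem is designed to cover.
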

	
	\begin{proof}
		If we write
			\[
				D(t) = -J(JA - itI)
			\]
		we see that $t_* \in [0, +\infty)$ is a crossing instant for $D$ if and only if
			\[
				it_* \in \sigma(JA) \cap i[0, +\infty).
			\]
		Indeed, since $-J$ is an isomorphism,
			\[
				\ker D(t) = \ker (JA - itI) \qquad \forall t \in [0, +\infty),
			\]
		and thus there is a bijection between the set of crossing instants $t_*$ of $D$ and the set of pure imaginary eigenvalues of $JA$ of the form $it_*$.
		Being $D$ an affine path, it is real-analytic, and the Principle of Analytic Continuation implies that every crossing (be it regular or not) is isolated, because it can be regarded as a zero
		of the (real-analytic) map $\det D(t)$.
		
		Let us examine the strictly positive crossings. By Proposition~\ref{thm:sgnpartial}, in a suitable neighbourhood with radius $\delta > 0$ around a crossing $t_* > 0$
		we see that
			\[
				\spfl \bigl( D, [t_* - \delta, t_* + \delta] \bigr) = \sgn \mathcal{B}_1,
			\]
		where $\mathcal{B}_1$ and $\H_{t_*}$ are as in the aforementioned proposition. Furthermore, by the general theory of the Krein signature (see Subsection~\ref{subsec:Krein}), for any
		crossing $t_* \in (0,+\infty)$ the restriction $\langle G\, \cdot, \cdot \rangle|_{\H_{t_*}}$ of the Krein form to each generalised eigenspace $\H_{t_*}$ is non-degenerate.
		In particular, Remark~\ref{sgndim} yields
			\begin{equation} \label{eq:sfisolcross}
				\spfl \bigl( D, [t_* - \delta, t_* + \delta] \bigr) \equiv \dim \H_{t_*} \quad \mod 2.
			\end{equation}
		for every strictly positive crossing instant $t_*$.
		
		When turning our attention to the instant $t = 0$, we have to distinguish two situations: one where $A$ is singular and one where it is not. Let us start with the former and assume that
		$A$ is non-invertible, so that $t_* = 0$ is a crossing for the path $D$. Since this is isolated, by arguing as in the proof of (T2) in Proposition~\ref{prop:MainA} we can find $\eps > 0$ and
		$T > \eps$ such that the path $D$ has only $t_* = 0$ as crossing instant on $[0, \eps]$ and $\spfl \bigl( D, [\eps, T_1] \bigr) = \spfl \bigl( D, [\eps,T_2] \bigr)$ for every $T_1, T_2 \geq T$.
		Thus, recalling Remark~\ref{rmk:spfl} and the fact that $n^-\bigl( D(T) \bigr) = n^-(G)$, we obtain
			\begin{equation}\label{eq:finalespfl2}
				\begin{split}
					\spfl \bigl( D, [\eps, T] \bigr) &= n^-\bigl( D(\eps) \bigr) - n^- \bigl( D(T) \bigr) \\ 
										&= n^- \bigl( D(\eps) \bigr) - 2n.
				\end{split}
			\end{equation}
		We observe that the dimension of the generalised eigenspace $\H_0$ (which coincides with the algebraic multiplicity of the eigenvalue $0$) is even, being $JA$ Hamiltonian.
		Intuitively speaking, then, since the Krein form is non-degenerate on this subspace, the null eigenvalues move from $0$ as $t$ leaves $0$; and since its signature at the initial instant is
		$0$ (by Krein theory, see Appendix~\ref{app:A}, page~\pageref{page:sign0}), they split evenly: half become positive and half negative. This justifies the choice of $\eps$ so small that
			\begin{equation}\label{eq:92}
				n^-\bigl( D(\eps) \bigr) = n^-(A) + \frac{\dim \H_0}{2}.
			\end{equation}
		On the other hand, we have
			\[
				4n = \quad 2\ \sum_{\mathclap{it_*\, \in\, \sigma(JA)\, \cap\, i(0, +\infty)}}\ \dim \H_{t_*} + \dim \H_0,
			\]
		or, equally well,
			\begin{equation}\label{eq:9piu2}
				2n - \frac{\dim \H_0}{2} = \quad\ \sum_{\mathclap{it_*\, \in\, \sigma(JA)\, \cap\, i(0, +\infty)}}\ \dim \H_{t_*}.
			\end{equation}
		By Equation~\eqref{eq:sfisolcross} and by the concatenation axiom defining the spectral flow, we get
			\begin{equation}\label{eq:10meno2}
				\spfl \bigl( D, [\eps,T] \bigr) \equiv \quad\ \sum_{\mathclap{it_*\, \in\, \sigma(JA)\, \cap\, i(0, +\infty)}}\ \dim \H_{t_*} \quad \mod 2,
			\end{equation}
		and comparing \eqref{eq:9piu2} and \eqref{eq:10meno2} we infer
			\begin{equation}\label{eq:102}
				\spfl \bigl( D, [\eps,T] \bigr) \equiv - \frac{\dim \H_0}{2} \equiv \frac{\dim \H_0}{2} \quad \mod 2.
			\end{equation}
		Equations~\eqref{eq:finalespfl2} and \eqref{eq:92} also yield
			\begin{equation}\label{eq:112}
				\spfl \bigl( D, [\eps,T] \bigr) \equiv n^-(A) + \frac{\dim \H_0}{2} \quad \mod 2,
			\end{equation}
		and from the last two congruences \eqref{eq:102} and \eqref{eq:112}, we finally conclude that
			\[
				n^-(A) \equiv 0 \quad \mod 2.
			\]
		
		In the case where $A$ is invertible, the initial instant $t = 0$ is not a crossing and therefore we can repeat the previous discussion in a simpler way, by considering the spectral flow
		directly on the interval $[0, T]$ (cf.~Corollary~\ref{cor:MainA}).
	 \end{proof}
	 
	The following corollary is a direct consequence of Theorem~\ref{prop:mainB}; however, since the case is much simpler and does not require in fact the partial signatures, we give an
	independent proof. In this special case in which the matrix $JA$ is diagonalisable the result can be proved directly by arguing as in Proposition~\ref{thm:cruciale} and by taking into account the
	local contribution to the spectral flow as discussed in Lemma~\ref{thm:lemmafava}.
 	  
	\begin{cor}\label{cor:mainB2}
		If $A$ is invertible and $JA$ is linearly stable, then $n^-(A)$ is even.
	\end{cor}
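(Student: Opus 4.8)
The plan is to compute the spectral flow of the affine path $D(t) = A + tG$ on an interval $[0, T]$ in two different ways and to compare the outcomes modulo $2$, exactly in the spirit of the non-singular case of Theorem~\ref{prop:mainB}, but exploiting the diagonalisability of $JA$ to bypass the partial signatures altogether. First I would recall, as in the proof of that theorem, that writing $D(t) = -J(JA - itI)$ identifies the crossing instants $t_* \in [0, +\infty)$ with the eigenvalues $it_*$ of $JA$ lying in $i[0, +\infty)$. Since $A$ is invertible and $J$ is an isomorphism, $JA$ is invertible, so $0 \notin \sigma(JA)$ and $t_* = 0$ is \emph{not} a crossing; this lets me work directly on $[0, T]$ and is precisely what makes the argument simpler. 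Linear stability forces $\sigma(JA) \subset i\R$ together with diagonalisability of $JA$, so each generalised eigenspace collapses to the ordinary one and every crossing $t_* > 0$ satisfies $\ker D(t_*) = \ker(JA - it_* I)$.

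Next I would check that \emph{every} crossing is regular, so that the elementary crossing-form expression for the spectral flow applies in place of Proposition~\ref{thm:sgnpartial}. Since $\dot{D}(t_*) = G$, the crossing form of Definition~\ref{def:crossing} is $\Gamma(D, t_*) = QGQ\bigr|_{\ker D(t_*)}$, which is just the restriction of the Krein form to the eigenspace; by the Krein properties of $G$ (Subsection~\ref{subsec:Krein}) this restriction is non-degenerate whenever $JA$ is linearly stable, whence $t_*$ is regular. Arguing as in Proposition~\ref{thm:cruciale} and using the local contribution computed in Lemma~\ref{thm:lemmafava}, for $T$ larger than every positive crossing one obtains
\[
	\spfl \bigl( D, [0, T] \bigr) = \sum_{0 < t_* < T} \sgn \Gamma(D, t_*).
\]
Non-degeneracy of each $\Gamma(D, t_*)$ gives $\sgn \Gamma(D, t_*) \equiv \dim \ker D(t_*) \bmod 2$, so that
\[
	\spfl \bigl( D, [0, T] \bigr) \equiv \sum_{it_*\, \in\, \sigma(JA)\, \cap\, i(0, +\infty)} \dim \ker D(t_*) \quad \bmod 2.
\]

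Finally I would evaluate both sides. Because $A$ is real the non-real spectrum of $JA$ is invariant under complex conjugation, so $it_*$ and $\overline{it_*} = -it_*$ occur with equal multiplicities; as $0$ is not an eigenvalue, the $4n$ available dimensions split evenly between $i(0, +\infty)$ and $i(-\infty, 0)$, and the sum above equals $2n$. On the other hand, by Remark~\ref{rmk:spfl} and the fact that $G = iJ$ has eigenvalues $\pm 1$ each of multiplicity $2n$ (so that $n^-(A + TG) = n^-(G) = 2n$ once $T$ is large), one has $\spfl(D, [0, T]) = n^-(A) - 2n$. Comparing the two expressions yields $n^-(A) - 2n \equiv 2n \equiv 0 \bmod 2$, that is, $n^-(A)$ is even. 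The only genuinely delicate point is the regularity of all crossings: it is exactly what permits the direct crossing-form computation to stand in for the partial-signature analysis, and it rests entirely on the Krein non-degeneracy supplied by linear stability.
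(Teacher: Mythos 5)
Your proposal is correct and follows essentially the same route as the paper's own proof: identify the crossings of $D(t)=A+tG$ with the eigenvalues $it_*\in\sigma(JA)\cap i(0,+\infty)$, use linear stability and Krein non-degeneracy to see that every crossing is regular with crossing form equal to the Krein form, sum the signatures modulo $2$ to get $2n$, and compare with $\spfl\bigl(D,[0,T]\bigr)=n^-(A)-2n$. The only cosmetic difference is that you justify the stabilisation of the spectral flow for large $T$ by finiteness of the crossings rather than by invoking Proposition~\ref{prop:MainA}, which changes nothing of substance.
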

	
	\begin{proof}
		First we observe that the second assumption implies that there is a bijection between the crossing instants $t_*$ and the pure imaginary eigenvalues of $JA$ of the form $it_*$ for positive
		real $t_*$. Let us then compute the crossing form $\Gamma(D, t_*)$ in correspondence of a crossing $t_* \in (0, +\infty)$: by definition it is given by
			\[
				\Gamma(D, t_*) \= Q\dot{D}(t_*)Q \bigr|_{\ker D(t_*)} = QGQ\bigr|_{\ker D(t_*)},
			\]
		where $Q$ is the orthogonal projection onto the kernel of $D(t_*)$. Note that the linear map $\Gamma(D, t_*)$ coincides (in the sense of Remark~\ref{rmk:crossform}) with the quadratic
		Krein form:
			\[
				\Gamma (D, t_*)[u] = \langle Gu, u \rangle, \qquad \forall\, u \in E_{it_*}(JA),
			\]
		since $\ker D(t_*) = E_{it_*}(JA)$ for every crossing $t_*$. By Krein theory and by the fact that $JA$ is diagonalisable, for any crossing instant $t_* \in (0,+\infty)$ the Krein form
		$g(u, u) \= \langle Gu,u \rangle$ is non-degenerate on each eigenspace $E_{it_*}(JA)$ and by Proposition~\ref{prop:MainA} there exists $T > 0$ such that
		$\spfl\bigl( D, [0, T_1] \bigr) = \spfl \bigl( D, [0, T_2] \bigr)$ for every $T_1, T_2 \geq T$. Thus we get
			\[
				\begin{split}
					\spfl \bigl( D, [0, T] \bigr) & = n^-(A) - n^- \bigl( D(T) \bigr) \\
							& = n^-(A) - 2n.
				\end{split}
			\]
		Since $JA$ is diagonalisable we have
			\[
				4n = \quad 2 \sum_{\mathclap{\lambda\, \in\, \sigma(JA)\, \cap\, i(0, +\infty)}}\ \dim E_\lambda,
			\]
		or, which is the same,
			\[
				2n = \quad \sum_{\mathclap{\lambda\, \in\, \sigma(JA)\, \cap\, i(0, +\infty)}}\ \dim E_\lambda
			\]
		Equation~\eqref{eq:utile} applied to the path $D$ yields
			\[
				\spfl \bigl( D, [0,T] \bigr) \equiv \quad  \sum_{\mathclap{\lambda\, \in\, \sigma(JA)\, \cap\, i(0, +\infty)}}\ \dim E_\lambda \quad \mod 2
			\]
		and we conclude that	
			\[
				n^-(A) \equiv 0 \quad \mod 2. \qedhere
			\]
	 \end{proof}
	 
	 \begin{rmk}
	 	We observe that Corollary~\ref{cor:mainB2} can be proved without using the technique of partial signatures also in the case where $A$ is not invertible.
		In order to take care of the crossing instant $t = 0$ it is enough to argue as in the proof of Theorem~\ref{prop:mainB}, with the only difference that, assuming diagonalisability, $\H_0$
		coincides with the kernel of $A$ (and, consequently, the kernel of $JA$).
	 \end{rmk}

	\section{Main theorem} \label{sec:main}

	We state and prove here the main result of our research, concerning the relationship between the Morse index of a critical point and the spectral instability of an associated relative
	equilibrium.
	
	Consider the matrix $B$ defined in \eqref{eq:generalB} and set
		\begin{equation} \label{eq:NsimB}
			N \= \begin{pmatrix}
				I & M\Xi \\
				0 & I
			\end{pmatrix}
			\begin{pmatrix}
				- D^2\mathcal{U}(\bar{x}) & \trasp{\Xi} \\
				\Xi & M^{-1}
			\end{pmatrix}
			\begin{pmatrix}
				I & 0 \\
				M\trasp{\Xi} & I
			\end{pmatrix} =
			\begin{pmatrix}
				-\big( D^2\mathcal{U}(\bar{x}) + \omega^2 M \big) & 0 \\
				0 & M^{-1}
			\end{pmatrix}.
		\end{equation}
	Observe that $D^2\mathcal{U}(\bar{x}) + \omega^2 M$ is precisely the Hessian $D^2\mathcal{U}_\Xi(\bar{x})$ of the augmented potential $\mathcal{U}_\Xi$ evaluated at its critical point
	$\bar{x}$ and define then the nullity and the Morse index of $\bar{x}$ as:
		\begin{align*}
			\nu(\bar{x}) & \= \nu \bigl( D^2\mathcal{U}_\Xi(\bar{x}) \bigr), \\
			\iMor(\bar{x}) & \= \iMor \bigl( D^2\mathcal{U}_\Xi(\bar{x}) \bigr).
		\end{align*}
	Thus we have the following theorem.
	
	\begin{thm} \label{thm:mainequiv}
		Let $\bar{x}$ be a critical point of the augmented potential function $\mathcal{U}_\Xi$ defined in \eqref{eq:augpotential} and assume that $\nu(\bar{x})$ is even.
		If $\iMor(\bar{x})$ is odd, then the relative equilibrium corresponding to $\bar{x}$ is spectrally unstable. 
	\end{thm}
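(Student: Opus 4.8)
The plan is to reduce the statement to Theorem~\ref{prop:mainB}, applied to the real symmetric matrix $A=B$ of \eqref{eq:generalB}, and then to read off the Morse index $n^-(B)$ from the congruent block-diagonal matrix $N$ of \eqref{eq:NsimB}, finishing with a parity count. Since the genuinely hard analytic content---the non-perturbative, partial-signature and Krein-theoretic analysis of the crossings of $D(t)=A+tG$ at the origin and along the imaginary axis---is already packaged in Theorem~\ref{prop:mainB}, which I am assuming, what remains here is essentially linear algebra and bookkeeping. I would prove the theorem in its contrapositive form, so it suffices to produce a single eigenvalue of the linearisation off the imaginary axis, equivalently to force $n^-(B)$ to be odd.

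First I would translate spectral stability of the relative equilibrium into a statement about $B$. The linearised system is \eqref{eq:hamsyslinearized}, namely $\dot z=-JBz$, so the relative equilibrium is spectrally stable exactly when $\sigma(-JB)\subset i\R$. Because $B$ is real and symmetric, both $JB$ and $-JB$ are Hamiltonian, and by Proposition~\ref{prop:spectra} their spectra are invariant under $\lambda\mapsto-\lambda$; hence $\sigma(-JB)=-\sigma(JB)=\sigma(JB)$, and spectral stability of $-JB$ is equivalent to spectral stability of $JB$. This is the point that lets me invoke Theorem~\ref{prop:mainB} with $A=B$: were the relative equilibrium spectrally stable, $JB$ would be spectrally stable and so $n^-(B)$ would be even.

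Next I would compute $n^-(B)$. The matrix $N$ of \eqref{eq:NsimB} arises from $B$ by the congruence $N=P^{\mathsf T}BP$ with $P=\begin{pmatrix}I&0\\ M\trasp{\Xi}&I\end{pmatrix}$; here one must check that the two outer factors in \eqref{eq:NsimB} are genuinely transposes of one another, which is where the relation $[K,M]=0$ (so that $\Xi M=M\Xi$) enters. Sylvester's law of inertia then gives $n^-(B)=n^-(N)$. As $N$ is block-diagonal with blocks $-D^2\mathcal{U}_\Xi(\bar x)$ and the positive-definite $M^{-1}$, the second block contributes no negative eigenvalue, so $n^-(N)=n^-\bigl(-D^2\mathcal{U}_\Xi(\bar x)\bigr)=n^+\bigl(D^2\mathcal{U}_\Xi(\bar x)\bigr)$. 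Since $D^2\mathcal{U}_\Xi(\bar x)$ is a $2n\times2n$ symmetric matrix, its coindex equals $2n-\iMor(\bar x)-\nu(\bar x)$, whence $n^-(B)=2n-\iMor(\bar x)-\nu(\bar x)$.

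Finally I would close with the parity count. Reducing modulo $2$ and using that $2n$ is even, the identity above yields $n^-(B)\equiv \iMor(\bar x)+\nu(\bar x)\pmod 2$, and the hypothesis that $\nu(\bar x)$ is even then collapses this to $n^-(B)\equiv\iMor(\bar x)$. Thus if $\iMor(\bar x)$ is odd, $n^-(B)$ is odd, and the contrapositive of Theorem~\ref{prop:mainB} forces $JB$, hence the relative equilibrium, to be spectrally unstable. The steps requiring the most care---once Theorem~\ref{prop:mainB} is granted---are the Hamiltonian spectral symmetry that replaces $-JB$ by $JB$, the verification that $N$ and $B$ are truly congruent, and the correct identification $n^-(B)=n^+\bigl(D^2\mathcal{U}_\Xi(\bar x)\bigr)$ that makes the even-nullity hypothesis convert the parity into the clean condition $n^-(B)\equiv\iMor(\bar x)$.
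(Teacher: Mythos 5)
Your proposal is correct and follows essentially the same route as the paper: contrapositive via Theorem~\ref{prop:mainB} applied to $A=B$, then Sylvester's law of inertia through the congruence \eqref{eq:NsimB} to get $n^-(B)=2n-\iMor(\bar x)-\nu(\bar x)$, and the parity count using the even-nullity hypothesis. The extra details you supply --- the spectral symmetry identifying stability of $-JB$ with that of $JB$, and the check via $[K,M]=0$ that the outer factors in \eqref{eq:NsimB} are genuine transposes --- are points the paper leaves implicit, and they are handled correctly.
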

	
	\begin{proof}
		Let $\H \= \C^{4n}$ and define the path $D : [0, +\infty) \to \Bsa(\H)$ as
			\[
				D(t) \= B + tG
			\]
		with $G \= iJ$, as in the previous section. We prove the contrapositive of the statement, that is, we show that if the relative equilibrium corresponding to the given critical point
		$\bar{x}$ is spectrally stable then its Morse index $\iMor(\bar{x})$ is even. Thus, assuming spectral stability, Theorem~\ref{prop:mainB} immediately yields
			\[
				n^-(B) \equiv 0 \quad \mod 2.
			\]
		Now, by Sylvester's Law of Inertia, we observe that
			\[
				n^-(B) = n^-(N),
			\]
		where $N$ is given by \eqref{eq:NsimB}, and since $n^-(N) = 2n - \iMor(\bar{x}) - \nu(\bar{x})$, it directly follows that
			\[
				\iMor(\bar{x}) \equiv 0 \quad \mod 2. \qedhere
			\]
	\end{proof}
	
	The next corollary is an immediate consequence of the previous theorem.
	
	\begin{cor} \label{cor:mainequiv}
		Let $\bar{x}$ be a critical point of the augmented potential function $\mathcal{U}_\Xi$. If $\iMor(\bar{x})$ or $\nu(\bar{x})$ are odd then the corresponding relative equilibrium is linearly
		unstable.
	\end{cor}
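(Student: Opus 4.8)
The plan is to deduce the corollary directly from Theorem~\ref{thm:mainequiv} together with the elementary spectral symmetry of Hamiltonian matrices recorded in Remark~\ref{rmk:spectra}. Recall from Definition~\ref{def:stabsym} that linear stability of the relative equilibrium means that the associated Hamiltonian matrix $JB$ (it is immaterial whether one works with $JB$ or $-JB$, since they differ by a sign and both the condition $\sigma\subset i\R$ and diagonalisability are invariant under negation) has purely imaginary spectrum \emph{and} is diagonalisable. Consequently it suffices to show that, whenever $\iMor(\bar{x})$ or $\nu(\bar{x})$ is odd, at least one of these two requirements fails. I would organise the argument according to the parity of $\nu(\bar{x})$.

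First I would treat the case in which $\nu(\bar{x})$ is odd. Since $J$ is an isomorphism we have $\ker(JB) = \ker B$, so the geometric multiplicity of the eigenvalue $0$ of $JB$ equals $\nu(B)$. Exactly as in the proof of Theorem~\ref{thm:mainequiv}, Sylvester's Law of Inertia applied to the congruence~\eqref{eq:NsimB} (together with the non-degeneracy of $M^{-1}$) yields $\nu(B) = \nu(N) = \nu\bigl( D^2\mathcal{U}_\Xi(\bar{x}) \bigr) = \nu(\bar{x})$. By Remark~\ref{rmk:spectra}, on the other hand, the eigenvalue $0$ of the Hamiltonian matrix $JB$ has \emph{even} algebraic multiplicity. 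If $\nu(\bar{x})$ is odd, the geometric multiplicity of $0$ is then odd and hence strictly smaller than its (even) algebraic multiplicity; thus $0$ is a defective eigenvalue, $JB$ admits a non-trivial Jordan block at $0$, and so it is not diagonalisable. Therefore the relative equilibrium is linearly unstable.

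Next I would handle the case in which $\iMor(\bar{x})$ is odd. If in addition $\nu(\bar{x})$ is odd, then we are already done by the previous paragraph; otherwise $\nu(\bar{x})$ is even, and Theorem~\ref{thm:mainequiv} applies verbatim and gives spectral instability. Since spectral instability is precisely the failure of spectral stability, and linear stability requires spectral stability, the relative equilibrium is \emph{a fortiori} linearly unstable. These two cases together exhaust every situation in which $\iMor(\bar{x})$ or $\nu(\bar{x})$ is odd, which establishes the corollary.

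I expect the only genuine point of care to be the nullity branch: one must identify the nullity $\nu(\bar{x})$ of the Hessian of the augmented potential with the geometric multiplicity of the zero eigenvalue of the full Hamiltonian matrix $JB$ (through the congruence $B\sim N$ of~\eqref{eq:NsimB}), and then pit the parity of this geometric multiplicity against the parity of the algebraic multiplicity, which is forced to be even by Remark~\ref{rmk:spectra}. The remaining steps are formal consequences of Theorem~\ref{thm:mainequiv} and of the definition of linear stability, so no further computation is needed.
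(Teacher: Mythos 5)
Your argument is correct and coincides with the one the paper intends: the odd Morse index case (with even nullity) is exactly Theorem~\ref{thm:mainequiv} plus the implication from spectral to linear instability, while the odd nullity case is the contrapositive of the paper's remark following the corollary, namely that linear stability forces $\nu(\bar{x}) = \nu(JB)$ to be even via diagonalisability, the identification $\nu(\bar{x}) = \nu(B) = \dim\ker(JB)$ coming from Sylvester's Law applied to~\eqref{eq:NsimB}. Your explicit case split (handling the situation where both quantities are odd through the nullity branch) is precisely the bookkeeping the paper leaves implicit, so nothing further is needed.
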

	
	\begin{rmk}
		Assuming linear stability we have that $\nu(\bar{x}) = \nu(JB)$, which is even due to the diagonalisability of $JB$.
	\end{rmk}

	\section{An important application: $n$-body-type problems} \label{sec:nbody}

	With reference to the notation and the setting outlined in the beginning of Section~\ref{sec:description}, we define two $n$-body-type problems by specifying two potential functions as follows.
	For each pair of indices $i, j \in \{1, \dots, n\}$, $i \neq j$, we let $\Delta_{ij}$ denote the \emph{collision set of the $i$-th and $j$-th particles}
		\[
			\Delta_{ij} \= \Set{ q \in \R^{2n} | q_i = q_j };
		\]
	we call $\displaystyle \Delta \= \bigcup_{i, j = 1}^n \Delta_{ij}$ the \emph{collision set} (by definition, then, $\Delta$ is a union of hyperplanes) and $X \= \R^{2n} \setminus \Delta$ the
	\emph{(collision-free) configuration space}.

	On this set (which is a cone in $\R^{2n}$) we define the \emph{potential functions} $\Uhom, \Ulog: X \to \R$ (generally denoted by $U$) as
		\begin{subequations} \label{eq:potentials}
			\begin{align}
				\Uhom(q) &\= \sum_{\substack{i, j = 1\\ i<j}}^n \frac{m_i m_j}{\abs{q_i - q_j}^{\alpha}}, \qquad \alpha \in (0, 2); \label{eq:Uhom} \\
				\Ulog(q) &\= -\sum_{\substack{i, j = 1\\ i<j}}^n m_i m_j\,\log\abs{q_i - q_j}. \label{eq:Ulog}
			\end{align}
		\end{subequations}
	From now on, unless otherwise specified, every reference to the contents of Section~\ref{sec:description} will be intended as concerning these two potential, \ie we consider $\mathcal{U} = U$.
	
	\begin{rmk}
		Note that for $\alpha = 1$ one finds the gravitational potential of the classical $n$\nobreakdash-body problem. Moreover, the logarithmic potential can be considered as a limit case of the
		$\alpha$\nobreakdash-ho\-mo\-ge\-ne\-ous%
			\footnote{The $\alpha$-homogeneous potential is actually homogeneous of degree $-\alpha$; however, we call it in this way for the sake of simplicity.}
		one, in the following sense:
			\[
				\frac{\Uhom(q) - 1}{\alpha} \sim \Ulog(q), \qquad \alpha \to 0^+,
			\]
		for every $q \in X$. Nevertheless, it displays quite a different behaviour with respect to $\Uhom$, as we shall show.
	\end{rmk}
	
	Since the centre of mass of the system moves with uniform rectilinear motion, without loss of generality we can fix it at the origin, that is we can set $\sum_{i = 1}^n m_i q_i = 0$.
	We thus consider the \emph{reduced (collision-free) configuration space} as follows:
		\[
 			\hat{X} \= \Set{ q \in X | \sum_{i = 1}^n m_i q_i = 0}.
		\]

	\begin{rmk}
		We observe that the Hamiltonian flow of System~\eqref{eq:Hamilton} is well defined on $T^*\hat{X}$ but it is not complete on $T^*\R^{2n}$, due to the existence of solutions
		for which the potential escapes to infinity in a finite time. This happens, for instance, for initial conditions leading to a collision between two or more particles.
	\end{rmk}

		\subsection{Central configurations and relative equilibria} \label{subsec:cc}
	
	We recall here some well-known facts about central configurations and fix our notation. For further references in the classical gravitational case, we	refer to \cite{Moeckel}.

	Let $a, b \in \R$, with $a < b$. We call $\bar{q} \in \hat{X}$ a \emph{(planar) central configuration} if there is some smooth real-valued function $r : (a, b) \to \R$, with $r(t) > 0$ for all
	$t \in (a, b)$, such that
		\begin{equation} \label{def:CC}
			q(t) \= r(t)\, \bar{q}
		\end{equation}
	is a (classical) solution of Newton's Equations~\eqref{eq:Newton}. Here $\bar{q}$ represents the constant shape of the configuration, while $r(t)$ its time-depending size. Substituting
	\eqref{def:CC} into \eqref{eq:Newton} we obtain:

	\begin{description}
		\item \emph{$\alpha$-homogeneous case:}
				\[
					\ddot{r} M \bar{q} = r^{-(\alpha+1)} \nabla \Uhom(\bar{q}). 
				\]
			Taking the scalar product with $\bar{q}$ in both sides of the above equality and applying Euler's theorem on homogeneous functions, we get
			$\ddot{r} = - \lambda_\alpha / r^{\alpha+1}$, where
				\begin{equation} \label{eq:lambdahom}
					\quad \lambda_\alpha \= \frac{\alpha \Uhom(\bar{q})}{\I(\bar{q})}.
				\end{equation}
		\item \emph{Logarithmic case:} 
				\[
					\ddot{r} M \bar{q} = r^{-1} \nabla \Ulog(\bar{q}).
				\]
			Taking again the scalar product with $\bar{q}$ as before, we get $\ddot{r} = -\lambda_{\log} / r$, where
				\[
					\quad \lambda_{\mathrm{log}} \= -\frac{\langle \nabla \Ulog(\bar{q}), \bar{q} \rangle}{\I(\bar{q})}.
				\]
			A straightforward computation shows that $\displaystyle - \langle \nabla \Ulog(\bar{q}), \bar{q} \rangle = \sum_{\substack{i, j = 1 \\ i < j}}^n m_i m_j \eq \mathcal{M}$, so that
				\begin{equation} \label{eq:lambdalog}
					\lambda_{\mathrm{log}} =  \dfrac{\mathcal{M}}{\I(\bar{q})}.
				\end{equation}
	\end{description}
	
	\begin{rmk} \label{rmk:lambdalog}
		It is worthwhile noting that in the logarithmic case the Lagrange multiplier depends only on the size of the central configuration (via the moment of inertia) and not on its shape.
	\end{rmk}
	
	In both cases, a central configuration $\bar{q}$ satisfies the \emph{central configurations equation}
		\begin{equation} \label{eq:cc}
			M^{-1} \nabla U(\bar{q}) + \lambda \bar{q} = 0,
		\end{equation}
	where $\lambda = \lambda_\alpha$ (resp.~$\lambda = \lambda_{\mathrm{log}}$) when $U = U_{\alpha}$ (resp.~$U=\Ulog$).
	Thus we can also look at a central configuration as a special distribution of the bodies in which the acceleration vector of each particle lines up with its position vector, and the
	proportionality constant $\lambda$ is the same for all particles. Equation~\eqref{eq:cc} is a quite complicated system of nonlinear algebraic equations and only few solutions are
	known.

	Let us now introduce the \emph{ellipsoid of inertia} (also called the \emph{standard ellipsoid})
		\[
			\S \= \Set{ q \in \hat{X} | \I(q) = 1 }.
		\]
	If $\bar{q}$ is a central configuration, then so are $c\bar{q}$ and $R \bar{q}$, for any $c \in \R \setminus \{ 0 \}$ and any $2n \times 2n$ block-diagonal matrix $R$ with blocks given by a
	$2 \times 2 $ fixed matrix in $\SO(2)$. We observe that the rescaled configuration $c\bar{q}$ solves a system analogous to \eqref{eq:cc} obtained by replacing $\lambda_\alpha$ with
	$\widetilde{\lambda}_\alpha \= \lambda_\alpha \abs{c}^{-(\alpha+2)}$ and $\lambda_{\log}$ with $\widetilde{\lambda}_{\log} \= \lambda_{\log}\abs{c}^{-2}$.
	Because of these facts, it is standard practice to count central configurations by fixing a constant $c$ (the \lq\lq scale\rq\rq: this actually means to work on $\S$) and to identify all those
	which are rotationally equivalent. This amounts to take the quotient of the configuration space $\hat{X}$ with respect to homotheties and rotations about the origin, or, which is the same, to
	consider the so-called \emph{shape sphere}
		\[
			\mathbb{S} \= \S / \SO(2).
		\]
	Note that the second equation of System~\eqref{eq:equilibrium2} (with $\mathcal{U} = U$) is precisely the Central Configurations Equation~\eqref{eq:cc}, with the square modulus of the
	angular velocity as Lagrange multiplier. Intuitively speaking, then, if we let $n$ bodies, distributed in a planar central configuration, rotate with an angular velocity $\omega$ equal to
	$\sqrt{\lambda_\alpha}$ or $\sqrt{\lambda_{\log}}$ (depending on the potential they are subject to), we get a relative equilibrium, which becomes an equilibrium in a uniformly rotating
	coordinate system.
				
	Motivated by the observation that one can write, for every $q \in \hat{X}$,
		\[
			U(q) = U \left( \sqrt{\I(q)} \frac{q}{\sqrt{\I(q)}} \right) = \begin{cases}
									\I^{-\frac{\alpha}{2}}(q)\, \Uhom \left( \dfrac{q}{\sqrt{\I(q)}} \right) &\text{if $U = \Uhom$} \\ \\
									\Ulog \left( \dfrac{q}{\sqrt{\I(q)}} \right) - \dfrac{\M}{2} \log \I(q) &\text{if $U = \Ulog$}
									\end{cases}
		\]	
	we define, as in \cite{BarSec}, the maps $f_\alpha, f_{\log} : \hat{X} \to \R$ respectively as
		\[
			f_\alpha (q) \= \I^{\frac{\alpha}{2}}(q) \Uhom(q) \qquad \text{and} \qquad f_{\mathrm{log}}(q) \= \Ulog(q) + \frac{\mathcal{M}}{2} \log \I(q),
		\]
	so that, restricting to the ellipsoid of inertia $\S$, we have
		\[
 			f_\alpha (q) = \Uhom \bigr|_{\S}(q)	\qquad \text{and} \qquad		f_{\mathrm{log}}(q) = \Ulog \bigr|_{\mathcal S}(q), \qquad \forall q \in \S.
		\]
	The reason for introducing these functions lies in the fact that we want to find the critical points of the potentials $\Uhom, \Ulog$ \emph{constrained to $\S$}: we shall now show that it is
	possible to compute them more easily as \emph{free} critical points of $f_\alpha$ and $f_{\log}$. Since the manifold $\S$ is topologically a sphere, we can avoid the use of the
	covariant derivative for this purpose.
	
	For every $(q,v) \in T \hat X$ we calculate, in the standard basis of $\R^{2n}$,
		\begin{subequations}\label{eq:grad}
			\begin{align}
				\langle \nabla f_\alpha (q),v \rangle &= \frac{\alpha}{2} \I^{\frac{\alpha}{2} - 1}(q) \Uhom(q) \langle \nabla \I(q),v \rangle + \I^\frac{\alpha}{2}(q) \langle \nabla\Uhom(q),v \rangle, \\
				\langle \nabla f_{\mathrm{log}}(q), v \rangle &= \langle \nabla\Ulog(q),v \rangle + \frac{\M}{2\I(q)} \langle \nabla \I(q),v \rangle.
			\end{align}
		\end{subequations}
	Now, recalling that $\I(q) = 1$ on $\mathcal{S}$ and that $\langle \nabla \I(q), v \rangle = \langle 2Mq, v \rangle$, we obtain, for every $q \in \S$ and every $v \in T_q\hat X$:
		\begin{subequations}\label{eq:gradU}
			\begin{align}
				\langle \nabla \Uhom \bigr|_{\S} (q), v \rangle &= \langle \nabla \Uhom(q), v \rangle + \alpha \Uhom(q) \langle Mq, v \rangle, \\
				\langle \nabla \Ulog \bigr|_{\S}(q), v \rangle &= \langle \nabla \Ulog(q), v \rangle + \M \langle Mq, v \rangle. 
			\end{align}
		\end{subequations}
	It is now clear, comparing Equations~\eqref{eq:cc} and \eqref{eq:gradU} and using \eqref{eq:lambdahom} and \eqref{eq:lambdalog}, that the constrained critical points of the restricted
	potentials $\Uhom|_{\S}$ and $\Ulog|_{\S}$ are precisely the central configurations.

	From Equations~\eqref{eq:grad} we compute the Hessians of $f_\alpha$ and $f_{\mathrm{log}}$ for every $(q, v) \in T \hat{X}$:
		\begin{subequations}\label{eq:Hess}
			\begin{align}
				\begin{split}
					\langle D^2 f_\alpha (q)v, v \rangle &= \frac{\alpha}{2} \left( \frac{\alpha}{2} - 1 \right) \I^{\frac{\alpha}{2} - 2}(q) \Uhom(q) \langle \nabla \I(q), v \rangle^2
													+ \alpha \I^{\frac{\alpha}{2} - 1}(q) \langle \nabla \Uhom(q), v \rangle \langle \nabla \I(q), v \rangle \\
												& \quad\, + \frac{\alpha}{2} \I^{\frac{\alpha}{2} - 1}(q) \Uhom(q) \langle D^2 \I(q)v, v \rangle
													+ \I^{\frac{\alpha}{2}}(q) \langle D^2\Uhom(q)v, v \rangle,
				\end{split} \raisetag{18.1pt} \\
				\langle D^2 f_{\mathrm{log}}(q)v, v \rangle &= \langle D^2 \Ulog(q)v, v \rangle + \frac{\mathcal{M}}{2}\left( -\frac{\langle \nabla \I(q),v \rangle^2}{\I^2(q)}
													+ \frac{\langle D^2 \I(q)v, v \rangle}{\I(q)} \right).
			\end{align}
		\end{subequations}
	Assuming that $q \in \mathcal{S}$ is a central configuration for $\Uhom$ (resp.~for $U_{\mathrm{log}}$) and recalling that $\langle D^2 \I(q)v, v \rangle = \langle 2Mv, v \rangle$, from
	\eqref{eq:cc} and \eqref{eq:Hess} we obtain, for every $v \in T_q\hat{X}$:
		\[
			\begin{split}
				\langle D^2 \Uhom \bigr|_{\S}(q) v, v \rangle &= \langle D^2 \Uhom(q)v, v \rangle + \alpha \Uhom(q) \langle Mv, v \rangle
					- \alpha (\alpha + 2) \Uhom(q) \langle Mq, v \rangle^2, \\
				\langle D^2 \Ulog \bigr|_{\S}(q) v, v \rangle &= \langle D^2 \Ulog(q)v, v \rangle + \M \big\{ \langle Mv,v \rangle - \langle Mq,v \rangle^2 \big\}.
			\end{split}
		\]
	Choosing $v \in T_q \mathcal{S}$, these last expressions can be simplified, since the equality $\langle Mq, v \rangle = 0$ holds:
		\begin{subequations} \label{eq:hessristr}
			\begin{align}
					\langle D^2 \Uhom \bigr|_{\S}(q) v, v \rangle &= \langle D^2 \Uhom(q)v, v \rangle + \alpha \Uhom(q) \langle Mv, v \rangle, \\
					\langle D^2 \Ulog \bigr|_{\S}(q) v, v \rangle &= \langle D^2 \Ulog(q)v, v \rangle + \M \langle Mv, v \rangle.
			\end{align}
		\end{subequations}
	Thus, for any central configuration $q \in \mathcal S$, the previous equations ensure that the Hessians of the restrictions of $\Uhom$ and $\Ulog$ to $\S$ are restrictions to $T_q\S$ of
	quadratic forms defined on the whole $T_q \hat{X}$.
	
	\begin{rmk}
		The previous equations still hold unchanged also if we restrict the potentials to the shape sphere $\mathbb{S}$.
	\end{rmk}

		\subsection{A symplectic decomposition of the phase space} \label{subsec:decomp}
	
	We continue our analysis by presenting here a symplectic splitting of the phase space which reflects the invariance of the $n$-body-type problems under some isometries.
	There are three components: the first one, denoted by $E_1$, represents the translational invariance, $E_2$ is the subspace generated by all rotations and dilations of the central
	configuration and the third one, $E_3$, is the symplectic complement of the other two.
	The reason behind this construction is that, due to the existence of the first integrals, there are eight eigenvalues of the linearised matrix which are
	always present, independently of the number of bodies $n$: accordingly, we isolate them and focus only on the remaining $4n - 8$, the ones holding the heart of the dynamics.
		
	When linearising around a relative equilibrium $\bar{\zeta}$, the $4n \times 4n$ Hamiltonian matrix associated to System~\eqref{eq:hamsyslinearized} is
		\begin{equation} \label{eq:L}
			L \= -JB = \begin{pmatrix}
						\omega K & M^{-1} \\
						D^2 U(\bar{x}) & \omega K
					\end{pmatrix},
		\end{equation}
	where, we recall, each block is a square matrix of size $2n \times 2n$. Since it will be necessary, in the following, to know the explicit expressions of the Hessians of the two potentials
	$\Uhom$ and $\Ulog$, we write them down here:
		\begin{subequations} \label{freehessianexpr}
			\begin{align}
				& D^2 \Uhom(x) \eq \bigl( S^{(\alpha)}_{ij} \bigr), &\quad &\text{with } \left\{ \begin{aligned}
											& S^{(\alpha)}_{ij} \= \alpha \frac{m_i m_j}{\abs{x_i - x_j}^{\alpha + 2}}\ \big[ I_2 - (\alpha + 2)u_{ij}\trasp{u}_{ij} \big] \qquad \text{if } j \neq i \\ 
											& S^{(\alpha)}_{ii} \= -\sum_{\substack{j = 1 \\ j \neq i}}^n S^{(\alpha)}_{ij}
										\end{aligned}\right. \raisetag{1.45cm} \\
				\notag \\
				& D^2 \Ulog(x) \eq \bigl( S^{(\log)}_{ij} \bigr), &\quad &\text{with } \left\{ \begin{aligned}
														& S^{(\log)}_{ij} \= \frac{m_i m_j}{\abs{x_i - x_j}^2} \big[ I_2 - 2 u_{ij} \trasp{u}_{ij} \big] \qquad \text{if } j \neq i \\
														& S^{(\log)}_{ii} \= -\sum_{\substack{j = 1 \\ j \neq i}}^n S^{(\log)}_{ij}
													\end{aligned}\right.
			\end{align}
		\end{subequations}
	where $u_{ij} \= \dfrac{x_i - x_j}{\abs{x_i - x_j}}$ and the indices $i$ and $j$ vary in $\{ 1, \dotsc, n \}$.

	Going back to the linearisation, we note that the first integrals of motion and the symmetries of the system generate two linear symplectic subspaces of the phase space
	$T^*X \cong X \times \R^{2n}$ which are invariant under $L$. Indeed, a basis for the position and momentum of the centre of mass is given by the four vectors in $\R^{4n}$
		\[
			v_1 \= \begin{pmatrix}
					v \\
					0
				\end{pmatrix}, \qquad
			v_2 \= \begin{pmatrix}
					Kv \\
					0
				\end{pmatrix}, \qquad
			v_3 \= \begin{pmatrix}
					0 \\
				 	Mv
				\end{pmatrix}, \qquad
			v_4 \= \begin{pmatrix}
					0 \\
					KMv
				\end{pmatrix}
		\]
	with $v \= \trasp{(1, 0, 1, 0, \dots, 1, 0)} \in \R^{2n}$. If we let $E_1$ denote the space spanned by these vectors, with the following computations we see that it is $L$-invariant:
		\begin{align*}
			Lv_1 & = \begin{pmatrix}
					\omega K v \\
					D^2 U(\bar{x}) v
				\end{pmatrix}
				= \omega v_2 + \begin{pmatrix}
								0 \\
								D^2 U(\bar{x}) v
							\end{pmatrix}
				= \omega v_2, \\
			Lv_2 & = \begin{pmatrix}
					\omega K^2 v \\
					D^2 U(\bar{x}) K v
				\end{pmatrix}
				= - \omega v_1 + \begin{pmatrix}
							0 \\
							D^2 U(\bar{x}) Kv
							\end{pmatrix}
				= - \omega v_1,\\
			Lv_3 & = 	\begin{pmatrix}
					M^{-1} M v \\
					\omega K M v
				\end{pmatrix}
				=  v_1 + \begin{pmatrix}
							0 \\
							\omega KMv
							\end{pmatrix}
				= v_1 + \omega v_4, \\
			Lv_4 & = 	\begin{pmatrix}
					M^{-1} KMv \\
					\omega K^2 Mv
				\end{pmatrix}
				=  v_2 + \begin{pmatrix}
							0 \\
							\omega K^2 Mv
							\end{pmatrix}
				= v_2 - \omega v_3,
		\end{align*}
	since $K$ and $M$ commute and $D^2 U(\bar{x}) v = D^2 U(\bar{x}) Kv = 0$ for both $\Uhom$ and $\Ulog$, due to their matrix structure \eqref{freehessianexpr}.
	The invariant space $E_1$ is also symplectic, because the standard symplectic form%
		\footnote{We take as standard symplectic form $\Omega$ on $\R^{4n}$ that one induced by the complex structure $J_{4n}$ (denoted again by $J$).}
	$\Omega_1 \= \Omega|_{E_1 \times E_1}$ of $(\R^{4n}, \Omega)$ restricted to $E_1$ is non-degenerate: we have indeed that $\Omega_1(v_1, v_3) = \langle J v_1, v_3 \rangle =
	\trasp{v}Mv \neq 0$ and $\Omega_1(v_2, v_4) = \langle J v_2, v_4 \rangle = \trasp{v}Mv \neq 0$. We denote by $L_1$ the restriction $L|_{E_1}$ of $L$ to $E_1$; from the calculations
	performed above to show the invariance of $E_1$, it follows that it is given, in the basis $(v_1, v_2, w_1, w_2)$, by the $4 \times 4$ matrix
		\[
			L_1 \= \begin{pmatrix}
					0 & -\omega & 1 & 0 \\
					\omega & 0 & 0 & 1 \\
					0 & 0 & 0 & -\omega \\
					0 & 0 & \omega & 0
				\end{pmatrix}.
		\]
	Its eigenvalues are $\pm i \omega$, each with algebraic multiplicity $2$; however, the dimension of the associated eigenspaces is $1$, and therefore $L_1$ is not diagonalisable.
	Note that the symplectic complement $E_1^{\perp_\Omega}$ of $E_1$ is the space where the centre of mass of the system is fixed at the origin and the total linear momentum is zero.

	The scaling symmetry and the conservation of the angular momentum generate another linear symplectic $L$-invariant subspace $E_2$, a basis of which is given by the four vectors
	in $\R^{4n}$
		\[
			w_1 \= \begin{pmatrix}
					\bar{x} \\
					0
				\end{pmatrix}, \qquad
			w_2 \= \begin{pmatrix}
					K\bar{x} \\
					0
				\end{pmatrix}, \qquad
			w_3 \= \begin{pmatrix}
					0 \\
					M\bar{x}
				\end{pmatrix}, \qquad
			w_4 \= \begin{pmatrix}
					0 \\
					KM\bar{x}
				\end{pmatrix}
		\]
	To show that this is $L$-invariant, we compute:
		\begin{align*}
			Lw_1 & = \begin{pmatrix}
					\omega K \bar{x} \\
					D^2 U(\bar{x}) \bar{x}
				\end{pmatrix}
				= \omega w_2 + \begin{pmatrix}
								0 \\
								D^2 U(\bar{x}) \bar{x}
							\end{pmatrix}
				= \begin{cases}
					\omega w_2 + (\alpha + 1)\omega^2 w_3 & \text{if $U = \Uhom$} \\
					\omega w_2 + \omega^2 w_3 & \text{if $U = \Ulog$}
				   \end{cases}, \\
			Lw_2 & = \begin{pmatrix}
					\omega K^2 \bar{x} \\
					D^2 U(\bar{x}) K\bar{x}
				\end{pmatrix}
				= - \omega w_1 + \begin{pmatrix}
							0 \\
							D^2 U(\bar{x}) K\bar{x}
							\end{pmatrix}
				= -\omega w_1 - \omega^2 w_4, \\
			Lw_3 & = 	\begin{pmatrix}
					M^{-1} M\bar{x} \\
					\omega KM\bar{x}
				\end{pmatrix}
				=  w_1 + \begin{pmatrix}
							0 \\
							\omega KM\bar{x}
							\end{pmatrix}
				= w_1 + \omega w_4, \\
			Lw_4 & = 	\begin{pmatrix}
					M^{-1} KM\bar{x} \\
					\omega K^2 M\bar{x}
				\end{pmatrix}
				=  w_2 + \begin{pmatrix}
							0 \\
							\omega K^2 M\bar{x}
							\end{pmatrix}
				= w_2 - \omega w_3.
		\end{align*}
	The first relation is obtained from Euler's theorem on homogeneous functions applied to $\nabla U(x)$:
		\[
			D^2 U(x) x = D\big( \nabla U(x) \big) x = \begin{cases}
										-(\alpha + 1) \nabla\Uhom(x) & \text{if $U = \Uhom$} \\
										- \nabla\Ulog(x) & \text{if $U = \Ulog$}
									\end{cases}
		\]
	and using the central configurations equation for relative equilibria. The second one comes from the invariance of the potentials under rotations: following \cite{Moeckel}, we have indeed that
	$U\big( R(t)x \big) = U(x)$ for every $R(t) \= e^{\omega Kt}$, and differentiating this relation with respect to $x$ we obtain%
		\footnote{We denote by $DU(x)$ the transpose of $\nabla U(x)$.}
	$DU \big( R(t)x \big) R(t) = DU(x)$. If we differentiate again with respect to $t$ at $t = 0$ and divide by $\omega$, we get
		\[
			\trasp{(Kx)}D^2 U(x) + DU(x)K = 0.
		\]
	When $x = \bar{x}$ is a central configuration associated with a relative equilibrium, as it is in this case, $DU(\bar{x}) = -\omega^2 \trasp{(M\bar{x})}$ and the equation above becomes, dividing
	both sides by $\omega$:
		\[
			\trasp{(K\bar{x})}D^2U(\bar{x}) - \omega^2 \trasp{(M\bar{x})}K = 0,
		\]
	or, equivalently,
		\[
			\trasp{(K\bar{x})}D^2U(\bar{x}) + \omega^2 \trasp{(KM\bar{x})} = 0.
		\]
	Because of the symmetry of the Hessian it is now sufficient to take the transpose of both sides to conclude.
	
	The space $E_2$ is again symplectic: a verification of the non-degeneracy of $\Omega_2 \= \Omega|_{E_2 \times E_2}$ is completely analogous to the one that we performed above for
	$E_1$. The matrices of $L|_{E_2}$ with respect to the basis $(w_1, w_2, w_3, w_4)$ are
		\begin{gather*}
			L_2^{(\alpha)} \= \begin{pmatrix}
						0 & -\omega & 1 & 0 \\
						\omega & 0 & 0 & 1 \\
						(\alpha + 1)\omega^2 & 0 & 0 & -\omega \\
						0 & -\omega^2 & \omega & 0
					\end{pmatrix} \qquad \text{if $U = \Uhom$} \\
	\intertext{and}
			L_2^{(\log)} \= \begin{pmatrix}
						0 & -\omega & 1 & 0 \\
						\omega & 0 & 0 & 1 \\
						\omega^2 & 0 & 0 & -\omega \\
						0 & -\omega^2 & \omega & 0
					\end{pmatrix} \qquad \text{if $U = \Ulog$}.
		\end{gather*}
	Their eigenvalues are $0$ (with algebraic multiplicity $2$) and $\pm i\omega\sqrt{2 - \alpha}$ in the homogeneous case ($\Uhom$), and $0$ (with algebraic multiplicity $2$) and
	$\pm i\omega\sqrt{2}$ in the logarithmic case ($\Ulog$). Again, these matrices are not diagonalisable because the eigenspace associated to $0$ is only one-dimensional.
	Table~\vref{tab:eigenvalues} summarises the information obtained thus far about these first eight eigenvalues.
		\begin{table}[tb]
			\caption{Eigenvalues of $L_1$ and $L_2$ for both potentials.}
			\label{tab:eigenvalues}
			\centering
			\begin{tabular}{cccc}
				\toprule
				 Potential	& \multicolumn{2}{c}{Eigenvalue} & Multiplicity \\
				 \midrule
				 \multirow{5}{*}{$\Uhom$}	& \multirow{2}{*}{$L_1^{(\alpha)}$} & $i\omega$ & 2 \\
					&  & $-i\omega$ & 2 \\
					\cmidrule(rl){2-4}
					& \multirow{3}{*}{$L_2^{(\alpha)}$} & $0$ & 2 \\
					&  & $i\omega\sqrt{2 - \alpha}$ & 1 \\
					&  & $-i\omega\sqrt{2 - \alpha}$ & 1 \\
				\cmidrule(rl){1-4}
				\multirow{5}{*}{$\Ulog$} & \multirow{2}{*}{$L_1^{(\log)}$} & $i\omega$ & 2 \\
					&  & $-i\omega$ & 2 \\
					\cmidrule(rl){2-4}
					& \multirow{3}{*}{$L_2^{(\log)}$} & $0$ & 2 \\
					&  & $i\omega\sqrt{2}$ & 1 \\
					&  & $-i\omega\sqrt{2}$ & 1 \\
				 \bottomrule
			\end{tabular}
		\end{table}
		      
	Thus, a relative equilibrium is always degenerate and not linearly stable in the classical sense. For this reason, we shall consider the restriction $L_3 \= L|_{E_3}$ of $L$ to the
	skew-orthogonal complement
		\begin{equation} \label{eq:E3}
			E_3 \= (E_1 \oplus E_2)^{\perp_\Omega},
		\end{equation}
	which is a linear symplectic subspace of dimension $4n - 8$ of $\R^{4n}$. Following \cite{Moeckel}, we adopt the following terminology.
	
	\begin{defn}
		A relative equilibrium is \emph{non-degenerate} if the remaining $4n - 8$ eigenvalues (relative to $L_3$) are different from $0$; we say that it is \emph{spectrally stable} if these
		eigenvalues are pure imaginary and \emph{linearly stable} if, in addition to this condition of spectral stability, $L_3$ is diagonalisable.
	\end{defn}
	
	In order to understand the structure of $L_3$, let us now consider the following change of variables:
		\[
			\begin{cases}
				x \mapsto C\xi \\
				\trasp{y} \mapsto \trasp{(C^{-1})}\trasp{\eta},
			\end{cases}
		\]
	where $C$ is a $2n \times 2n$ invertible matrix such that $[C, K] = 0$ and $\trasp{C}MC = I$. Then we have, for every $(x, y) \in T^*X$:
		\[
			L \begin{pmatrix}
					x \\
					\trasp{y}
				\end{pmatrix} = \begin{pmatrix}
								\omega K & M^{-1} \\
								D^2 U(\bar{x}) & \omega K
							\end{pmatrix}
							\begin{pmatrix}
								C\xi \\
								\trasp{(C^{-1})} \trasp{\eta}
							\end{pmatrix} = \begin{pmatrix}
											\omega KC\xi + M^{-1}\trasp{(C^{-1})} \trasp{\eta} \\
											D^2 U(\bar{x})C\xi + \omega K \trasp{(C^{-1})} \trasp{\eta}
										\end{pmatrix}.
		\]
	From the first condition on $C$ we find that also $\trasp{(C^{-1})}$ commutes with $K$, while from the second one we get that $\trasp{(C^{-1})} = MC$, so that we can write
		\[
			L \begin{pmatrix}
					x \\
					\trasp{y}
				\end{pmatrix} = \begin{pmatrix}
								C(\omega K\xi + \trasp{\eta}) \\
								\trasp{(C^{-1})} \big( \trasp{C} D^2 U(\bar{x})C\xi + \omega K \trasp{\eta} \big)
							\end{pmatrix} = \begin{pmatrix}
											C & 0 \\
											0 & \trasp{(C^{-1})}
										\end{pmatrix} \begin{pmatrix}
														\omega K & I \\
														\trasp{C} D^2 U(\bar{x}) C & \omega K
													\end{pmatrix} \begin{pmatrix}
																	\xi \\
																	\trasp{\eta}
																\end{pmatrix}.
		\]
	The matrix $C$ can be thought of as made up of $2 \times 2$ blocks of the form $(b, Jb)$, for any vector $b \in \R^2$; furthermore, it can be shown (see \cite{MeySch}) that, using a
	Gram-Schmidt-type algorithm, the first four columns of $C$ can be chosen as $(v, Kv, \bar{x}, K\bar{x})$, where $v$ is, as before, the vector $\trasp{(1,0, \dotsc, 1, 0)} \in \R^{2n}$.
	Looking now at the structures of (the first columns of) $C$ and $K$, one can recover the restrictions $L_1$ and $L_2$ from the equation above and derive the expression for the
	$(4n - 8) \times (4n - 8)$ matrix representing $L_3$:
		\[
			L_3 \= \begin{pmatrix}
					\omega K & I \\
					\D & \omega K
				\end{pmatrix},
		\]
	where every block has dimension $(2n - 4) \times (2n - 4)$ and $\D$ is the Hessian $\trasp{C} D^2 U(\bar{x}) C$ restricted to $E_3$, acting on the last $2n - 4$ components of $\xi$.
	The study of the linear stability of the relative equilibrium $\bar{z}$ amounts then to determine whether or not this matrix is spectrally stable and/or diagonalisable.

		\subsection{An example: the equilateral triangle}
	
	It is easy to see that the Lagrangian triangle with equal masses is a central configuration both for the $\alpha$-homogeneous potential and the logarithmic one. Indeed, both of them give rise
	to a central force field and the symmetry of a regular polygon is a sufficient condition for the bodies to satisfy Equation~\eqref{eq:cc}. We analyse here the behaviour of this relative equilibrium
	with respect to linear stability for both potentials.
	
	For simplicity of computation we set
		\[
			m_1 \= m_2 \= m_3 \= 1
		\]
	in both situations. The centre of mass is fixed at the origin and the setting is as described previously, specially Subsection~\ref{subsec:releq}.
	
	In the $\alpha$-homogeneous case we have that $\omega = \sqrt{3\alpha}$, hence the matrix $L^{(\alpha)}$ of the linearised problem is
		\[
			L^{(\alpha)} = \left( \begin{smallmatrix}
							0 & -\sqrt{3\alpha} & 0 & 0 & 0 & 0 & 1 & 0 & 0 & 0 & 0 & 0 \\
							\sqrt{3\alpha} & 0 & 0 & 0 & 0 & 0 & 0 & 1 & 0 & 0 & 0 & 0 \\
							0 & 0 & 0 & -\sqrt{3\alpha} & 0 & 0 & 0 & 0 & 1 & 0 & 0 & 0 \\
							0 & 0 & \sqrt{3\alpha} & 0 & 0 & 0 & 0 & 0 & 0 & 1 & 0 & 0 \\
							0 & 0 & 0 & 0 & 0 & -\sqrt{3\alpha} & 0 & 0 & 0 & 0 & 1 & 0 \\
							0 & 0 & 0 & 0 & \sqrt{3\alpha} & 0 & 0 & 0 & 0 & 0 & 0 & 1 \\
							\frac{1}{2}a & 0 & -\frac{1}{4}a & -\frac{\sqrt{3}}{4}b & -\frac{1}{4}a &
																													\frac{\sqrt{3}}{4}b &
																											0 & -\sqrt{3\alpha} & 0 & 0 & 0 & 0 \\
							0 & \frac{1}{2}c & -\frac{\sqrt{3}}{4}b & -\frac{1}{4}c & \frac{\sqrt{3}}{4}b &
																													-\frac{1}{4}c &
																											\sqrt{3\alpha} & 0 & 0 & 0 & 0 & 0 \\
							-\frac{1}{4}a & -\frac{\sqrt{3}}{4}b & \frac{1}{2}\alpha + \frac{5}{4}\alpha^2 & \frac{\sqrt{3}}{4}b &
																													-\alpha(\alpha + 1) & 0 &
																											0 & 0 & 0 & -\sqrt{3\alpha} & 0 & 0 \\
							-\frac{\sqrt{3}}{4}b & -\frac{1}{4}c & \frac{\sqrt{3}}{4}b & -\frac{1}{2}\alpha + \frac{3}{4}\alpha^2 & 0 & \alpha &
																											0 & 0 & \sqrt{3\alpha} & 0 & 0 & 0 \\
							-\frac{1}{4}a & \frac{\sqrt{3}}{4}b & -\alpha(\alpha + 1) & 0 & \frac{1}{2}\alpha + \frac{5}{4}\alpha^2 &
																													-\frac{\sqrt{3}}{4}b &
																											0 & 0 & 0 & 0 & 0 & -\sqrt{3\alpha} \\
							\frac{\sqrt{3}}{4}b & -\frac{1}{4}c & 0 & \alpha & -\frac{\sqrt{3}}{4}b & -\frac{1}{2}\alpha + \frac{3}{4}\alpha^2 &
																											0 & 0 & 0 & 0 & \sqrt{3\alpha} & 0
						\end{smallmatrix}
					\right),
		\]
	where $a \= \alpha(\alpha - 2)$, $b \= \alpha(\alpha + 2)$ and $c \= \alpha(3\alpha + 2)$. Its eigenvalues are
		\begin{align*}
			\lambda_1 & \= i\sqrt{3\alpha}, & \lambda_5 & \= 0,  & \lambda_9 & \= \frac{1}{2}\sqrt{6\alpha^2 + 12\alpha(i\sqrt{2\alpha} - 1)}, \\
			\lambda_2 & \= -i\sqrt{3\alpha}, & \lambda_6 & \= 0, & \lambda_{10} & \= -\frac{1}{2}\sqrt{6\alpha^2 + 12\alpha(i\sqrt{2\alpha} - 1)}, \\
			\lambda_3 & \= i\sqrt{3\alpha}, & \lambda_7 & \= i\sqrt{3\alpha(2 - \alpha)}, & \lambda_{11} & \= \frac{1}{2}\sqrt{6\alpha^2 - 12\alpha(i\sqrt{2\alpha} + 1)}, \\
			\lambda_4 & \= -i\sqrt{3\alpha}, & \lambda_8 & \= -i\sqrt{3\alpha(2 - \alpha)}, & \lambda_{12} & \= -\frac{1}{2}\sqrt{6\alpha^2 - 12\alpha(i\sqrt{2\alpha} + 1)}.
		\end{align*}
	The first four are those relative to the subspace $E_1$, the second four are related to the subspace $E_2$ and the last four are linked to the essential part of the dynamics, the subspace
	$E_3$. It is immediate to see that, for any value of $\alpha \in (0, 2),$ none of these last four eigenvalues is pure imaginary: their square is indeed a complex number with non-zero
	imaginary part, and not a negative real number as it should be. Therefore we conclude that the equilateral triangle is spectrally (hence linearly) unstable for every $\alpha \in (0, 2)$.
	This accords with the fact that every regular polygon is linearly unstable in the gravitational case, as showed by Moeckel in \cite{MR1350320}.
	We also verified (only for $\alpha = 1/2$ and $\alpha = 1$) that the matrix $L_3^{(\alpha)}$ is diagonalisable; unfortunately, due to lack of computational power, we could not check
	if this property is maintained for every other value of the homogeneity parameter in the range of investigation.
	
	As for the logarithmic potential, the angular velocity of the bodies is $\omega = \sqrt{3}$ and the matrix of the linearisation is the following:
		\[
			L^{(\log)} = \left( \begin{smallmatrix}
							0 & -\sqrt{3} & 0 & 0 & 0 & 0 & 1 & 0 & 0 & 0 & 0 & 0 \\
							\sqrt{3} & 0 & 0 & 0 & 0 & 0 & 0 & 1 & 0 & 0 & 0 & 0 \\
							0 & 0 & 0 & -\sqrt{3} & 0 & 0 & 0 & 0 & 1 & 0 & 0 & 0 \\
							0 & 0 & \sqrt{3} & 0 & 0 & 0 & 0 & 0 & 0 & 1 & 0 & 0 \\
							0 & 0 & 0 & 0 & 0 & -\sqrt{3} & 0 & 0 & 0 & 0 & 1 & 0 \\
							0 & 0 & 0 & 0 & \sqrt{3} & 0 & 0 & 0 & 0 & 0 & 0 & 1 \\
							-1 & 0 & \frac{1}{2} & -\frac{\sqrt{3}}{2} & \frac{1}{2} & \frac{\sqrt{3}}{2} & 0 & -\sqrt{3} & 0 & 0 & 0 & 0 \\
							0 & 1 & -\frac{\sqrt{3}}{2} & -\frac{1}{2} & \frac{\sqrt{3}}{2} & -\frac{1}{2} & \sqrt{3} & 0 & 0 & 0 & 0 & 0 \\
							\frac{1}{2} & -\frac{\sqrt{3}}{2} & \frac{1}{2} & \frac{\sqrt{3}}{2} &	-1 & 0 & 0 & 0 & 0 & -\sqrt{3} & 0 & 0 \\
							-\frac{\sqrt{3}}{2} & -\frac{1}{2} & \frac{\sqrt{3}}{2} & -\frac{1}{2} & 0 & 1 & 0 & 0 & \sqrt{3} & 0 & 0 & 0 \\
							\frac{1}{2} & \frac{\sqrt{3}}{2} & -1 & 0 & \frac{1}{2} & -\frac{\sqrt{3}}{2} & 0 & 0 & 0 & 0 & 0 & -\sqrt{3} \\
							\frac{\sqrt{3}}{2} & -\frac{1}{2} & 0 & 1 & -\frac{\sqrt{3}}{2} & -\frac{1}{2} & 0 & 0 & 0 & 0 & \sqrt{3} & 0
						\end{smallmatrix}
					\right).
		\]
	Its eigenvalues are
		\begin{align*}
			\lambda_1 & \= i\sqrt{3}, & \lambda_5 & \= 0, & \lambda_9 & \= i\sqrt{3}, \\
			\lambda_2 & \= -i\sqrt{3}, & \lambda_6 & \= 0, & \lambda_{10} & \= -i\sqrt{3}, \\
			\lambda_3 & \= i\sqrt{3}, & \lambda_7 & \= i\sqrt{6}, & \lambda_{11} & \= i\sqrt{3}, \\
			\lambda_4 & \= -i\sqrt{3}, & \lambda_8 & \= -i\sqrt{6}, & \lambda_{12} & \= -i\sqrt{3}
		\end{align*}
	and as before the last four are connected to the essential subspace $E_3$. Here it is clear that the relative equilibrium is spectrally stable, since every eigenvalue is pure imaginary.
	Nevertheless, it is not linearly stable, because the matrix $L_3^{(\log)}$ is not diagonalisable.
	
	This simple example shows the deep contrast between the $\alpha$-homogeneous potential and the logarithmic one, as well as their similarities: in both cases, indeed, there is linear instability,
	but for opposite reasons.

		\subsection{Linear instability results} \label{subsec:linstab}
	
	We now present a theorem on spectral (hence linear) instability of relative equilibria, valid both in the $\alpha$-homogenous and in the logarithmic case.
	This constitutes an improvement, even in the gravitational case ($\alpha = 1$), of the result found by X.~Hu and S.~Sun in \cite{HuSunCRASP}. Since their proof was only sketched,
	we provide here a complete demonstration and, at the same time, we show that it holds for more general singular potentials.
	In what follows $U$ can be indifferently substituted by $\Uhom$ or $\Ulog$.
	
	Let  $B_3 \in \Mat(4n-8, \R)$ be the restriction of the $4n \times 4n$ matrix $B$ of System~\eqref{eq:hamsyslinearized} to the invariant symplectic subspace $E_3$ of the phase space defined
	by \eqref{eq:E3}. It can be written as
		\[
			B_3 \=
				\begin{pmatrix}
					- \D & \omega \trasp{K}\\
					\omega K & I
				\end{pmatrix},
		\]
	where each block is of dimension $(2n - 4) \times (2n - 4)$ and $\D$ is the restriction of $\trasp{C} D^2 U(\bar{x}) C$ to $E_3$. Following the authors in \cite{HuSunCRASP}, we have:
		\begin{equation} \label{eq:matrixdecomposition}
			\begin{pmatrix}
				I & \omega K\\
				0 & I
			\end{pmatrix}
			\begin{pmatrix}
				- \D & \omega \trasp{K}\\
				\omega K & I
			\end{pmatrix}
			\begin{pmatrix}
				I & 0\\
				-\omega K & I
			\end{pmatrix}=
			\begin{pmatrix}
				-\big(\D + \omega^2 I \big) & 0\\
				0 & I
			\end{pmatrix} \eq N_3.
		\end{equation}
	Note that
		\[
			\D + \omega^2 I \= \trasp{C} D^2 U(\bar{x}) C \bigr|_{E_3} + \omega^2 I = \trasp{C} \big( D^2 U(\bar{x}) + \omega^2 M \big) C \bigr|_{E_3}
		\]
	is precisely the Hessian of $U|_{\mathbb{S}}$ evaluated at the central configuration $\bar{x}$ (cf.~Equations~\eqref{eq:hessristr}, keeping in mind that $\omega^2 = \lambda_\alpha$ if
	$U = \Uhom$ and $\omega^2 = \lambda_{\log}$ if $U = \Ulog$) and restricted to $E_3$.
	
	Define then the nullity and the Morse index of $\bar{x}$ as
		\begin{gather*}
			\nu(\bar{x}) \= \nu \bigl( \D + \omega^2 I \bigr)
		\intertext{and}
			\iMor(\bar{x}) \= \iMor \bigl( \D + \omega^2 I \bigr),
		\end{gather*}
	respectively.
	
	\begin{thm} \label{thm:main}
		Let $\bar{x} \in \mathbb{S}$ be a central configuration for $\Uhom$ or $\Ulog$ such that its nullity $\nu(\bar{x})$ is even.
		If $\iMor(\bar{x})$ is odd, then the corresponding relative equilibrium is spectrally unstable. 
	\end{thm}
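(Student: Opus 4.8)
The plan is to reduce Theorem~\ref{thm:main} to the abstract result Theorem~\ref{prop:mainB}, applied to the restricted matrix $B_3$ on the essential symplectic subspace $E_3$, exactly in the spirit of the proof of Theorem~\ref{thm:mainequiv}. First I would observe that, since $\trasp{K} = -K$, the matrix $B_3$ is real symmetric, and that with respect to the complex structure $J$ induced on $E_3 \cong \R^{4n-8}$ by the restricted symplectic form $\Omega|_{E_3 \times E_3}$ one has $L_3 = -J B_3$, equivalently $B_3 = J L_3$. Hence $J B_3$ is Hamiltonian and $\sigma(J B_3) = -\sigma(L_3)$. Consequently, spectral stability of the relative equilibrium --- that is, $\sigma(L_3) \subset i\R$ in the sense of the definition of Subsection~\ref{subsec:decomp} --- is equivalent to spectral stability of $J B_3$ in the sense of Definition~\ref{def:stabsym}.

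I would then argue by contraposition, showing that spectral stability forces $\iMor(\bar{x})$ to be even. Assuming the relative equilibrium spectrally stable, I apply Theorem~\ref{prop:mainB} with $A = B_3$ to conclude that $n^-(B_3)$ is even. Next, the congruence \eqref{eq:matrixdecomposition} together with Sylvester's Law of Inertia gives $n^-(B_3) = n^-(N_3)$. Since $N_3 = \diag\big( -(\D + \omega^2 I),\, I \big)$ and the lower block $I$ (of size $2n-4$) is positive definite, the negative eigenvalues of $N_3$ are exactly the positive eigenvalues of $\D + \omega^2 I$, whence
\[
	n^-(B_3) = n^-(N_3) = n^+\big( \D + \omega^2 I \big) = (2n-4) - \iMor(\bar{x}) - \nu(\bar{x}).
\]
Because $2n-4$ and $\nu(\bar{x})$ are both even, the evenness of $n^-(B_3)$ forces $\iMor(\bar{x})$ to be even, contradicting the hypothesis that it is odd. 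This establishes the contrapositive and hence the theorem.

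The only genuinely delicate point --- and the one I expect to be the main obstacle --- is the very first step: one must ensure that $E_3$ is legitimately a standard linear symplectic space on which the hypotheses of Theorem~\ref{prop:mainB} are meaningful, namely the real symmetry of $B_3$, the Hamiltonian character of $JB_3$, and the Krein non-degeneracy exploited in that proof. This is precisely where the change of variables through the matrix $C$ with $[C,K]=0$ and $\trasp{C}MC = I$ from Subsection~\ref{subsec:decomp} does the work: it simultaneously brings the mass metric to the identity and displays $L_3$ in the canonical block form $\begin{pmatrix} \omega K & I \\ \D & \omega K \end{pmatrix}$, so that $J$ is the standard complex structure on $E_3$ and $B_3 = J L_3$ is symmetric. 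Once this identification is secured, everything reduces to the purely formal index bookkeeping above, which is identical in structure to the computation $n^-(N) = 2n - \iMor(\bar{x}) - \nu(\bar{x})$ used for Theorem~\ref{thm:mainequiv}.
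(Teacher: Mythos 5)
Your proof is correct and follows essentially the same route as the paper: the published proof simply defines the path $D(t) = B_3 + tG$ on $\C^{4n-8}$ and declares the argument ``completely analogous'' to that of Theorem~\ref{thm:mainequiv}, using \eqref{eq:matrixdecomposition} in place of \eqref{eq:NsimB}, which is precisely the reduction to Theorem~\ref{prop:mainB} and the Sylvester index count $n^-(B_3) = (2n-4) - \iMor(\bar{x}) - \nu(\bar{x})$ that you carry out. Your additional care in checking that $E_3$ is a genuine symplectic subspace on which $B_3 = JL_3$ is symmetric is exactly the content the paper delegates to Subsection~\ref{subsec:decomp}.
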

	
	\begin{proof}
		Let $\H \= \C^{4n - 8}$ and define the path $D : [0, +\infty) \to \Bsa(\H)$ as
			\[
				D(t) \= B_3 + tG
			\]
		with $G \= iJ$, as above. The proof is then completely analogous to that of Theorem~\ref{thm:mainequiv}, taking into account \eqref{eq:matrixdecomposition} rather than \eqref{eq:NsimB}.
	\end{proof}
	
	\begin{rmk}
		A case occurring quite frequently is $\nu(\bar{x}) = 0$: this happens, for instance, in regular $n$-gons, at least for small values of $n$.
	\end{rmk}
	
	An immediate corollary of this theorem is the following, which, in the gravitational case $\alpha=1$, is the main result of \cite{HuSunCRASP}.
	
	\begin{cor} \label{cor:main}
		Let $\bar{x} \in \mathbb{S}$ be a central configuration for $\Uhom$ or $\Ulog$. If $\iMor(\bar{x})$ or $\nu(\bar{x})$ are odd then the corresponding relative equilibrium is linearly unstable.
	\end{cor}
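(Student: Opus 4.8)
The plan is to prove the contrapositive: assuming that the relative equilibrium associated with $\bar x$ is linearly stable, I would deduce that \emph{both} $\iMor(\bar x)$ and $\nu(\bar x)$ are even, so that the oddness of either one forces linear instability. By the definition of linear stability in the reduced setting, the hypothesis means precisely that the Hamiltonian matrix $L_3 = -JB_3$ is spectrally stable \emph{and} diagonalisable, and I would exploit these two features separately and in this order.

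First I would dispose of the nullity, using only diagonalisability. Since $B_3$ is symmetric, $L_3$ is Hamiltonian, so by Remark~\ref{rmk:spectra} the eigenvalue $0$ has even algebraic multiplicity; diagonalisability then forces its geometric multiplicity, namely $\dim \ker L_3$, to be even as well. Because $J$ is an isomorphism we have $\ker L_3 = \ker B_3$, and the congruence \eqref{eq:matrixdecomposition} together with Sylvester's Law of Inertia gives $\nu(B_3) = \nu(N_3) = \nu(\D + \omega^2 I) = \nu(\bar x)$, the identity block of $N_3$ contributing no nullity. Hence $\nu(\bar x)$ is even.

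With even nullity now secured, the Morse index follows immediately. Spectral stability of $L_3$ is equivalent to spectral stability of $JB_3$, so Theorem~\ref{thm:main}—applicable precisely because its even-nullity hypothesis has just been verified—yields, in its contrapositive form, that $\iMor(\bar x)$ is even. Combining the two conclusions, linear stability implies that $\iMor(\bar x)$ and $\nu(\bar x)$ are both even, which is exactly the contrapositive of the corollary.

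The argument is short, and the only point demanding care is the sequencing. The even-nullity hypothesis of Theorem~\ref{thm:main} is not available a priori: it must be extracted from the diagonalisability half of linear stability \emph{before} the theorem can be invoked for the Morse index, after which the spectral-stability half feeds into Theorem~\ref{thm:main}. Thus the main (mild) obstacle is to keep the two halves of linear stability logically separated and in the right order; tracking the nullity through the congruence of \eqref{eq:matrixdecomposition} is then routine via Sylvester's law.
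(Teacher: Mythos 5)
Your proposal is correct and follows exactly the route the paper intends: the paper leaves the corollary as ``immediate'' but its accompanying remark (``Assuming linear stability we have that $\nu(\bar{x}) = \nu(JB)$, which is even due to the diagonalisability of $JB$'') is precisely your first step, and the second step is the contrapositive of Theorem~\ref{thm:main} once even nullity is in hand. Your explicit attention to the ordering --- extracting even nullity from diagonalisability \emph{before} invoking the theorem --- is the one point the paper glosses over, and you handle it correctly.
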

	
	We shall now derive a useful condition to detect spectral instability of a relative equilibrium utilising only the associated central configuration. Consider again the matrix $L$ of the linearised
	problem given by Equation~\eqref{eq:L}. In the wake of \cite{MR1709850}, we study the eigenvalue problem $Lu = \lambda u$, with $\lambda \in \C$ and
	$u \= \bigl( \begin{smallmatrix} u_1 \\ u_2 \end{smallmatrix} \bigr)$ belonging to $\C^{2n} \times \C^{2n}$ (both $u_1$ and $u_2$ are column vectors):
		\[
			Lu \= \begin{pmatrix}
					\omega K & M^{-1} \\
					D^2 U(\bar{x}) & \omega K
				\end{pmatrix}
				\begin{pmatrix}
					u_1 \\
					u_2
				\end{pmatrix} =
				\begin{pmatrix}
					\lambda u_1 \\
					\lambda u_2
				\end{pmatrix},
		\]
	which corresponds to the system
		\[
			\begin{cases}
				u_2 = M(\lambda I - \omega K)u_1 \\
				Pu_1 = 0,
			\end{cases}
		\]
	where
		\[
			P \= M^{-1} D^2 U(\bar{x}) + (\omega^2 - \lambda^2)I + 2 \lambda \omega K.
		\]
	Thus, to compute the eigenvalues of $L$, it is enough to find those of $P$.
	
	Note that the diagonal $2 \times 2$ blocks of $P$ are of the form
		\[
			\begin{pmatrix}
				d_{ii} + \omega^2 - \lambda^2 & d_{i, i + 1} - 2 \lambda \omega \\
				d_{i + 1, i} + 2 \lambda \omega & d_{i + 1, i + 1} + \omega^2 - \lambda^2
			\end{pmatrix},
		\]
	where the $d_{ij}$'s are the entries of the symmetric matrix $M^{-1}D^2U(\bar{x})$ --- hence $i$ is odd. The determinant of each diagonal block is (setting $\mu \= \lambda^2$)
		\[
			\mu^2 + (2 \omega^2 - d_{ii} - d_{i + 1, i + 1}) \mu + (d_{ii} + \omega^2)(d_{i + 1, i + 1} + \omega^2),
		\]
	so that we have
		\begin{equation} \label{eq:detP}
			\det P = \mu^{2n} + \big( 2n\omega^2 - \tr \big[ M^{-1}D^2 U(\bar{x}) \big] \big) \mu^{2n - 1} + \dotsb,
		\end{equation}
	because the only contribution to the coefficient of $\mu^{2n - 1}$ comes from the diagonal blocks. Now, since the characteristic polynomial of $L$ is even (being $L$ Hamiltonian), from
	Equation~\eqref{eq:detP} we can derive an expression for the sum of the squares of its roots, \ie the eigenvalues $\lambda_i$ of $L$:
		\[
			\sum_{i = 1}^{2n} \mu_i = \sum_{i = 1}^{2n} (\lambda^2)_i = \frac{1}{2} \sum_{i = 1}^{4n} (\lambda_i)^2 = \tr \bigl[ M^{-1} D^2 U(\bar{x}) \bigr] - 2n\omega^2.
		\]
	Recalling the structure of the Hessians of the potentials \eqref{freehessianexpr}, we obtain
		\[
			\tr \big[ M^{-1} D^2 U(\bar{x}) \big] = \begin{cases}
									\displaystyle \sum_{\substack{i, j = 1 \\ i < j}}^{n} \frac{\alpha^2 (m_i + m_j)}{\abs{\bar{x}_i - \bar{x}_j}^{\alpha + 2}} & \text{if $U = \Uhom$} \\ \\
									0 & \text{if $U = \Ulog$}.
								\end{cases}
		\]
	The computation is easily done, noting that $\tr(u_{ij}\trasp{u}_{ij}) = 1$:
		\begin{align}
			\begin{split}
				\tr \big[ M^{-1} D^2 \Uhom(\bar{x}) \big] &= \sum_{i = 1}^n \Bigg\{ - \sum_{\substack{j = 1 \\ j \neq i}}^n
													\frac{\alpha m_j}{\abs{\bar{x}_i - \bar{x}_j}^{\alpha + 2}} \big[ 2 - (\alpha + 2) \big] \Bigg\}\\
											&= \sum_{i = 1}^n \sum_{\substack{j = 1 \\ j \neq i}}^n \frac{\alpha^2 m_j}{\abs{\bar{x}_i - \bar{x}_j}^{\alpha + 2}}
													= \sum_{\substack{i, j = 1 \\ i < j}}^{n} \frac{\alpha^2 (m_i + m_j)}{\abs{\bar{x}_i - \bar{x}_j}^{\alpha + 2}},
			\end{split} \notag \\ & \notag \\
			\tr \big[ M^{-1} D^2 \Ulog(\bar{x}) \big] &= \sum_{i = 1}^n \Bigg\{ - \sum_{\substack{j = 1 \\ j \neq i}}^n \frac{m_j}{\abs{\bar{x}_i - \bar{x}_j}^2} \big[ 2 - 2 \big] \Bigg\} = 0.
				\label{eq:tracelog}
		\end{align}
	
	This discussion proves the following claim.
	
	\begin{thm} \label{thm:sumeigenvalues}
		Let $\bar{z} \= \trasp{(\trasp{\bar{x}}, \bar{y})}$, with $\bar{x} \in \mathbb{S}$ a central configuration, be a relative equilibrium for System~\eqref{eq:hamnuovevariabili} related to $\Uhom$
		(resp.~$\Ulog$), with angular velocity $\omega = \sqrt{\alpha \Uhom(\bar{x})}$ (resp.~$\omega = \sqrt{\mathcal{M}}$), and let $L$ be the matrix \eqref{eq:L} of the associated linearised
		System~\eqref{eq:hamsyslinearized}, with eigenvalues $\lambda_i$ ($i = 1, \dotsc, 4n$). Then we have
			\begin{enumerate}[i)]
				\item $\alpha$-homogeneous case: 
						\begin{equation} \label{eq:sumeigenvaluesuhom}
							\sum_{i = 1}^{4n} (\lambda_i)^2 = 2\alpha^2 \sum_{\substack{i, j = 1 \\ i < j}}^{n} \frac{m_i + m_j}{\abs{\bar{x}_i - \bar{x}_j}^{\alpha + 2}}
																- 4n\alpha\Uhom(\bar{x});
						\end{equation}
				\item Logarithmic case:
						\[
							\sum_{i = 1}^{4n} (\lambda_i)^2 = - 4n\M.
						\]
			\end{enumerate}
	\end{thm}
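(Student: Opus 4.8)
The plan is to extract $\sum_{i=1}^{4n}\lambda_i^2$ from a single coefficient of the characteristic polynomial, using the reduction to the matrix $P$ already carried out above. Since the discussion preceding the statement shows that $\lambda \in \sigma(L)$ exactly when $P = M^{-1}D^2U(\bar{x}) + (\omega^2 - \lambda^2)I + 2\lambda\omega K$ is singular, and since $\det P$, viewed as a polynomial in $\mu \= \lambda^2$, is the degree-$2n$ expression recorded in \eqref{eq:detP}, I would simply invoke Vieta's formulae: the sum of the $2n$ roots $\mu_i$ equals minus the coefficient of $\mu^{2n-1}$, that is, $\sum_{i=1}^{2n}\mu_i = \tr\bigl[M^{-1}D^2U(\bar{x})\bigr] - 2n\omega^2$.

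Next I would convert this into a statement about the spectrum of $L$ itself. Because $L$ is Hamiltonian, Proposition~\ref{prop:spectra} guarantees that its characteristic polynomial is even, so its $4n$ eigenvalues split into $\pm$ pairs; each root $\mu_i$ of $\det P$ thus accounts for two eigenvalues $\pm\sqrt{\mu_i}$ of $L$, both squaring to $\mu_i$. Consequently $\tfrac{1}{2}\sum_{i=1}^{4n}\lambda_i^2 = \sum_{i=1}^{2n}\mu_i$, and doubling the Vieta identity yields the single formula $\sum_{i=1}^{4n}\lambda_i^2 = 2\tr\bigl[M^{-1}D^2U(\bar{x})\bigr] - 4n\omega^2$, valid for both potentials.

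It then remains only to substitute the correct data. On the shape sphere one has $\I(\bar{x}) = 1$, so \eqref{eq:lambdahom} and \eqref{eq:lambdalog} give $\omega^2 = \alpha\Uhom(\bar{x})$ in the homogeneous case and $\omega^2 = \M$ in the logarithmic one. Inserting these, together with the two trace evaluations already obtained, namely $\tr\bigl[M^{-1}D^2\Uhom(\bar{x})\bigr] = \alpha^2\sum_{i<j}(m_i+m_j)/\abs{\bar{x}_i-\bar{x}_j}^{\alpha+2}$ and $\tr\bigl[M^{-1}D^2\Ulog(\bar{x})\bigr] = 0$, reproduces the two displayed identities at once.

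The only genuinely delicate point, which I would treat with care, is the claim that the coefficient of $\mu^{2n-1}$ in $\det P$ receives contributions solely from the diagonal $2\times 2$ blocks. This rests on a degree count: the inter-block entries of $P$ are the $\lambda$-free numbers $d_{k\ell}$ coming from $M^{-1}D^2U(\bar{x})$, so any term in the Leibniz expansion of $\det P$ that uses such an entry must sacrifice a diagonal factor $\omega^2-\lambda^2$ and hence drops strictly below degree $\mu^{2n-1}$; moreover, within each diagonal block the off-diagonal entries $\pm 2\lambda\omega$ contribute the crucial term $+4\omega^2\mu$ to that block's determinant, which is exactly what produces the $2\omega^2$ (rather than $-\omega^2$) per block in the summed coefficient. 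Verifying this bookkeeping, together with the symmetry $d_{k\ell}=d_{\ell k}$ used in simplifying the block determinants, is where the computation must be done carefully; everything else is a direct substitution.
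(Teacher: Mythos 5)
Your argument is correct and follows essentially the same route as the paper's own proof: reduction of the eigenvalue problem for $L$ to the matrix $P$, extraction of the coefficient of $\mu^{2n-1}$ in $\det P$ from the diagonal $2\times 2$ blocks, the identity $\sum_{i=1}^{2n}\mu_i = \tfrac{1}{2}\sum_{i=1}^{4n}\lambda_i^2$ via the evenness of the characteristic polynomial of the Hamiltonian matrix $L$, and substitution of $\omega^2 = \alpha\,\Uhom(\bar{x})$ (resp.\ $\omega^2 = \M$) together with the two trace evaluations. Your explicit degree count in the Leibniz expansion, showing that inter-block entries cannot contribute to the $\mu^{2n-1}$ coefficient, is a sound elaboration of a step the paper merely asserts.
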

	
	For a relative equilibrium to be spectrally stable, its eigenvalues must be pure imaginary and therefore their squares must be non-positive. We know the first eight of them, listed in
	Table~\vref{tab:eigenvalues}: the sum of their squares in the $\alpha$-homogeneous case is
		\begin{equation} \label{eq:8eigenvaluesuhom}
			\sum_{i = 1}^{8} (\lambda_i)^2 = (2 \alpha - 8)\omega^2 = 2\alpha (\alpha - 4) \Uhom(\bar{x}).
		\end{equation}
		
	We are now in the position to formulate the following sufficient condition for spectral (hence linear) instability.
	
	\begin{cor} \label{cor:ineqUhom}
		With the hypotheses of Theorem~\ref{thm:sumeigenvalues} (for $U = \Uhom$), if the following inequality holds:
			\begin{equation} \label{eq:ineqUhom}
				\sum_{\substack{i, j = 1 \\ i < j}}^{n} \frac{m_i + m_j}{\abs{\bar{x}_i - \bar{x}_j}^{\alpha + 2}} > \frac{2n + \alpha - 4}{\alpha}\ \Uhom(\bar{x})
			\end{equation}
		then the relative equilibrium $\bar{z}$ is spectrally unstable.
	\end{cor}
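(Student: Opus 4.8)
The plan is to prove the contrapositive: assuming that the relative equilibrium $\bar{z}$ is spectrally stable, I will derive the reverse of inequality~\eqref{eq:ineqUhom}. Recall that, in the sense of the definition adopted for $n$-body-type problems, spectral stability means precisely that the $4n - 8$ essential eigenvalues of $L$ (those associated with the subspace $E_3$) are pure imaginary. Consequently each such eigenvalue $\lambda_i$ satisfies $(\lambda_i)^2 \leq 0$, so the sum of the squares of the essential eigenvalues is non-positive. The whole argument thus reduces to isolating this partial sum and comparing it with zero.

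First I would separate the contribution of the eight eigenvalues coming from $E_1 \oplus E_2$ (listed in Table~\ref{tab:eigenvalues}) from the total furnished by Theorem~\ref{thm:sumeigenvalues}. Subtracting \eqref{eq:8eigenvaluesuhom} from \eqref{eq:sumeigenvaluesuhom} yields
	\[
		\sum_{i = 9}^{4n} (\lambda_i)^2 = 2\alpha^2 \sum_{\substack{i, j = 1 \\ i < j}}^{n} \frac{m_i + m_j}{\abs{\bar{x}_i - \bar{x}_j}^{\alpha + 2}} - 2\alpha(2n + \alpha - 4)\,\Uhom(\bar{x}),
	\]
where the two terms proportional to $\Uhom(\bar{x})$ collect as $-4n\alpha - 2\alpha(\alpha - 4) = -2\alpha(2n + \alpha - 4)$.

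Then spectral stability forces the left-hand side of the display to be non-positive; dividing by the strictly positive factor $2\alpha^2$ (recall $\alpha \in (0, 2)$) produces exactly the inequality opposite to \eqref{eq:ineqUhom}. Taking the contrapositive completes the argument: if \eqref{eq:ineqUhom} holds strictly, then $\sum_{i = 9}^{4n} (\lambda_i)^2 > 0$, which is incompatible with all essential eigenvalues being pure imaginary, and hence $\bar{z}$ is spectrally unstable. I do not anticipate any genuine obstacle here, as the computation is purely algebraic; the only point requiring a modicum of care is the bookkeeping that certifies the eight removed eigenvalues are exactly those of $L_1$ and $L_2$, and that the operative definition of spectral stability concerns only the $E_3$-block, so that non-positivity of the essential partial sum is indeed the right criterion to test.
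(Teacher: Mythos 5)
Your proposal is correct and follows essentially the same route as the paper: both argue by contraposition, use spectral stability to bound the sum of the squares of the $4n-8$ essential eigenvalues by zero, and then combine Equations~\eqref{eq:sumeigenvaluesuhom} and \eqref{eq:8eigenvaluesuhom} to extract the inequality. The algebraic collection $-4n\alpha - 2\alpha(\alpha-4) = -2\alpha(2n+\alpha-4)$ is right, so nothing further is needed.
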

	
	\begin{rmk}
		Observe that the relative equilibrium may be degenerate, \ie the matrix $L_3$ may have some zero eigenvalues. We rule out, however, the possibility of complete degeneracy
		($L_3 = 0$): this would correspond indeed to a spectrally stable scenario.
	\end{rmk}
	
	\begin{proof}[Proof of Corollary~\ref{cor:ineqUhom}]
		We prove the contrapositive statement: suppose that the relative equilibrium $\bar{z}$ is spectrally stable. This assumption implies that the sum of the squares of the remaining $4n - 8$
		eigenvalues must be non-positive:
			\[
				\sum_{i = 9}^{4n}(\lambda_i)^2 \leq 0,
			\]
		where equality corresponds to the completely degenerate case where all the eigenvalues of $L_3$ are equal to zero. Adding to both sides the first eight eigenvalues we obtain
			\[
				\sum_{i = 1}^{4n}(\lambda_i)^2 \leq \sum_{i = 1}^{8}(\lambda_i)^2.
			\]
		 Therefore, by Equations~\eqref{eq:sumeigenvaluesuhom} and \eqref{eq:8eigenvaluesuhom}, we get
			\[
				2\alpha^2 \sum_{\substack{i, j = 1 \\ i < j}}^{n} \frac{m_i + m_j}{\abs{\bar{x}_i - \bar{x}_j}^{\alpha + 2}}	- 4n\alpha\Uhom(\bar{x}) \leq 2\alpha (\alpha - 4) \Uhom(\bar{x}).
			\]
		Solving for the summation yields the result.
	\end{proof}
	
	\begin{rmk}
		Note that Corollary~\ref{cor:ineqUhom} provides a tool to detect spectral instability only for the $\alpha$-homogeneous potential $\Uhom$. In the logarithmic case, indeed,
		it is not possible to derive a similar useful condition because of Equation~\eqref{eq:tracelog}. As a justification of this fact, if we let $\alpha \to 0^+$ in \eqref{eq:ineqUhom} we see that the
		left-hand side remains finite, as well as $\Uhom(\bar{x})$, whereas the coefficient on the right-hand side tends to $+\infty$, thus shrinking the solution set of the inequality to
		$\varnothing$. This is not surprising, and is actually in accord with Remark~\ref{rmk:lambdalog}.
	\end{rmk}
	
	As an example of application of Corollary~\ref{cor:ineqUhom}, we examine regular $n$-gons (with $n \geq 3$, as before), employing Roberts' estimates in \cite{MR1709850}. For the
	sake of simplicity, set $m_j \= 1$ for every $j \in \{1, \dotsc, n\}$ and let all the bodies lie at distance $1$ from the origin of the reference frame, positioned at the vertices of a regular
	$n$-gon. In this way $\bar{x}_j = \trasp{\bigl( \cos\frac{2j\pi}{n}, \sin\frac{2j\pi}{n} \bigr)}$ denotes the position of the $j$-th body. Because of the symmetry of this configuration, we have that
	$\abs{\bar{x}_i - \bar{x}_{i + j}} = \abs{\bar{x}_n - \bar{x}_j}$ for all $i, j \in \{1, \dotsc, n\}$, where the indices are understood modulo $n$. Through elementary trigonometry we find
		\[
			\abs{\bar{x}_n - \bar{x}_j} = 2 \sin \frac{j\pi}{n}, \qquad \forall\, j \in \{1, \dotsc, n - 1\},
		\]
	and after a few simplifications Inequality~\eqref{eq:ineqUhom} becomes
		\[
			\sum_{j = 1}^{n - 1} \frac{1}{\sin^{\alpha + 2} \bigl( \frac{j\pi}{n} \bigr)} - \frac{4n + 2\alpha - 8}{n\alpha} \sum_{j = 1}^{n - 1} \frac{1}{\sin^\alpha \bigl( \frac{j\pi}{n} \bigr)} > 0,
		\]
	where we have taken into account the moment of inertia, $\I(\bar{x}) = n$, so that $\bar{x} \in \mathbb{S}$. We make use of the following estimates:
		\[
			\sum_{j = 1}^{n - 1} \frac{1}{\sin^{\alpha + 2} \bigl( \frac{j\pi}{n} \bigr)} \geq \frac{2}{\sin^{\alpha + 2} \bigl( \frac{\pi}{n} \bigr)}, \qquad \qquad
			\sum_{j = 1}^{n - 1} \frac{1}{\sin^\alpha \bigl( \frac{j\pi}{n} \bigr)} \leq \frac{n - 1}{\sin^\alpha \bigl( \frac{\pi}{n} \bigr)}
		\]
	and impose the stronger condition
		\[
			\frac{2}{\sin^{\alpha + 2} \bigl( \frac{\pi}{n} \bigr)} -
										\frac{4n + 2\alpha - 8}{n\alpha} \left[ \frac{n - 1}{\sin^\alpha \bigl( \frac{\pi}{n} \bigr)} \right] > 0.
		\]
	Collecting the common factor $1/\sin^\alpha \bigl( \frac{\pi}{n} \bigr)$, which is positive for every $n \geq 3$, this is equivalent to asking
		\[
			\frac{2}{\sin^2 \bigl( \frac{\pi}{n} \bigr)} - (n - 1)\frac{4n + 2\alpha - 8}{n\alpha} > 0.
		\]
	Exploiting the fact that $\frac{1}{\sin^2 x} > \frac{1}{x^2}$ for every $x \in \R \setminus \pi\Z$, we obtain the solution
		\[
			\bar{\alpha}(n) \= \frac{2\pi^2(n^2 - 3n + 2)}{n^3 - \pi^2n + \pi^2} < \alpha < 2,
		\]
	which is meaningful only for $n \geq 8$. Therefore, for every $n \geq 8$ we see that there exists a real number $\bar{\alpha}(n) \in (0, 2)$ such that for any
	$\alpha \in \bigl( \bar{\alpha}(n), 2 \bigr)$ the regular $n$-gon is spectrally unstable. Moreover, we observe that $\bar{\alpha}(n)$ monotonically tends to $0$ as $n \to +\infty$.

	\appendix
	
	\section{Analytic and symplectic framework} \label{app:A}
	
	The aim of this section is to examine more in depth the analytic and symplectic setting which is used in the rest of the paper, reporting some properties and results supporting and completing
	the previous propositions. The notation is the same as that one adopted in Subsection~\ref{subsec:spfl}.

		\subsection{On the spectral flow}
	
	We present here some important properties of the spectral flow. Our basic reference is \cite{Les05}.
	
	\begin{thm}[\cite{Les05}] \label{thm:conc-hominv}
		Let
			\[
				\mu : \Omega \bigl( \Bsa(\H), \G\Bsa(\H) \bigr) \to \Z,
			\]
		be a map which satisfies the following properties:
			\begin{enumerate}[i)]
				\item \emph{Concatenation:} If $\gamma, \delta \in \Omega \bigl( \Bsa(\H), \G\Bsa(\H) \bigr)$, with $\gamma(b) = \delta(a)$, then
						\[
							\mu(\gamma * \delta) = \mu(\gamma) + \mu(\delta).
						\]
				\item \emph{Homotopy invariance:} The map $\mu$ descends to a map $\tilde{\mu} : \tilde{\pi}_1 \bigl( \Bsa(\H), \G\Bsa(\H) \bigr) \to \Z$, that is, the following diagram is
					commutative ($p$ denotes the quotient map):
						\[
							\xymatrix{
								\Omega \bigl( \Bsa(\H), \G\Bsa(\H) \bigr) \ar[r]^-{\mu} \ar[d]_-{p} & \Z \\
								\tilde{\pi}_1\bigl( \Bsa(\H), \G\Bsa(\H) \bigr) \ar[ur]_-{\tilde{\mu}}.
								}
						\]
				\item \emph{Normalisation:} There exist an orthogonal projector $P \in \Bsa(\H)$ of rank $1$ such that
						\begin{enumerate}[a)]
							\item the restriction $(I - P)A(I - P)|_{\ker P}$ of the operator $(I - P)A(I - P) \in \Bsa(\H)$ to the kernel of $P$ is invertible for every $A \in \Bsa(\H)$;
							\item the path $\zeta \in \Omega \bigl( \Bsa(\H), \G\Bsa(\H) \bigr)$ defined by
									\[
										\zeta(t) \= \biggl( t - \frac{1}{2} \biggr)P + (I - P)A(I - P) 	\qquad \textup{for all } t \in [0, 1]
									\]
								verifies
									\[
										\mu (\zeta) = 1.
									\]
						\end{enumerate}
			\end{enumerate}
		Then 
			\[
				\mu(\gamma) = \spfl \bigl( \gamma, [a, b] \bigr)
			\]
		for all $\gamma \in \Omega \bigl( \Bsa(\H), \G\Bsa(\H) \bigr)$.
	\end{thm}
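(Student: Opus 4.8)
The plan is to show that the three axioms characterise $\spfl$ uniquely, by first checking that the spectral flow is itself such a map and then proving that any two maps satisfying (i)--(iii) agree. The substantive half is the second one, so I would set $N \= \dim_\C \H$ and study the difference $\nu \= \mu - \spfl \colon \Omega\bigl(\Bsa(\H), \G\Bsa(\H)\bigr) \to \Z$. Granting that $\spfl$ obeys the axioms, $\nu$ is additive under concatenation, descends to $\tilde{\pi}_1\bigl(\Bsa(\H), \G\Bsa(\H)\bigr)$, and vanishes on the normalising path $\zeta$; the goal becomes $\nu \equiv 0$. Verifying that $\spfl$ satisfies the axioms is a direct computation: concatenation is immediate from Definition~\ref{def:spectralflow}; homotopy invariance holds because the image of each endpoint under the homotopy stays in one component of $\G\Bsa(\H)$, so that $n^-\bigl(\gamma(a)\bigr)$ and $n^-\bigl(\gamma(b)\bigr)$ are invariant; and the normalisation follows from Remark~\ref{rmk:spfl}, since $\zeta(0)$ and $\zeta(1)$ differ only in the sign of the single eigenvalue contributed by $(t - \tfrac{1}{2})P$, whence $\spfl(\zeta) = n^-\bigl(\zeta(0)\bigr) - n^-\bigl(\zeta(1)\bigr) = 1$.

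The next step is to describe $\tilde{\pi}_1\bigl(\Bsa(\H), \G\Bsa(\H)\bigr)$ explicitly. Since $\Bsa(\H)$ is a contractible (hence simply connected) real vector space, a free homotopy class of paths with endpoints in $\G\Bsa(\H)$ is determined precisely by the ordered pair of connected components of its two endpoints: one first slides each endpoint inside its own component of $\G\Bsa(\H)$ to a chosen representative, and then uses simple connectivity to fix the class rel those endpoints. The components $\pi_0\bigl(\G\Bsa(\H)\bigr)$ are in turn indexed by the Morse index $n^- \in \{0, 1, \dots, N\}$, because two invertible Hermitian matrices lie in the same component exactly when they have the same signature. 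Thus $\tilde{\pi}_1$ is the pair groupoid on $\{0, \dots, N\}$, and a concatenation-additive homomorphism such as $\nu$ is forced into the coboundary form $\nu[\gamma] = h\bigl(n^-(\gamma(b))\bigr) - h\bigl(n^-(\gamma(a))\bigr)$ for some function $h \colon \{0, \dots, N\} \to \Z$.

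It then remains to prove that $h$ is constant, equivalently that $\nu$ vanishes on a path realising each elementary crossing between adjacent components $k$ and $k-1$. Here I would invoke the normalisation in full: choosing the spectator operator $A$ in $\zeta$ so that $(I-P)A(I-P)|_{\ker P}$ has exactly $k-1$ negative eigenvalues, the path $\zeta$ runs from index $k$ to index $k-1$ and satisfies $\mu(\zeta) = 1 = \spfl(\zeta)$, so that $\nu = 0$ on the class $[k \to k-1]$, i.e.\ $h(k) = h(k-1)$. Letting $k$ range over $1, \dots, N$ forces $h$ constant, hence $\nu \equiv 0$, which is the assertion. The main obstacle is exactly this final step: the axioms directly pin down $\mu$ only along the normalising path, so one must ensure that \emph{every} adjacent transition is captured. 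This is achieved by letting the invertible spectator part $A$ attain each admissible signature on the $(N-1)$-dimensional space $\ker P$, which is precisely what makes the single rank-one normalisation propagate to all the generators of the groupoid; the accompanying delicate point is the careful verification that $\spfl$ genuinely satisfies all three axioms, as sketched in the first paragraph.
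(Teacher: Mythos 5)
Your argument is correct and is essentially the standard uniqueness proof of \cite{Les05}, which the paper does not reproduce (the theorem is only quoted there): one checks that $\spfl$ itself satisfies the three axioms, identifies $\tilde{\pi}_1\bigl(\Bsa(\H),\G\Bsa(\H)\bigr)$ with the pair groupoid on $\pi_0\bigl(\G\Bsa(\H)\bigr)\cong\{0,\dots,\dim_{\C}\H\}$ using the convexity of $\Bsa(\H)$ and the classification of components of $\G\Bsa(\H)$ by signature, deduces that $\mu-\spfl$ must be of coboundary form $h\bigl(n^-(\gamma(b))\bigr)-h\bigl(n^-(\gamma(a))\bigr)$, and kills it with the normalisation. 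You also correctly isolate the one delicate point, namely that the normalisation must be read as quantified over all spectator operators $A$ with $(I-P)A(I-P)|_{\ker P}$ invertible (so that every adjacent transition $k\to k-1$ is realised by some $\zeta$); with a single normalising path one would only obtain $h(k_0)=h(k_0-1)$ for one value of $k_0$ and the conclusion would fail.
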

	
	\begin{rmk}
		If we fix a basis $(e_1, \dotsc, e_n)$ in $\H$, then the axiom of normalisation in the previous theorem can be stated as follows. Let $P \in \Bsa(\H)$ be an orthogonal projector
		whose image is generated by $e_1$ and for a fixed $k \in \{2, \dotsc n - 1\}$ define two other orthogonal projectors $P_k^+$ and $P_k^-$ by $\im P_k^+ \= \gen\{ e_2, \dotsc, e_k \}$ and
		$\im P_k^- \= \gen\{ e_{k + 1}, \dotsc, e_n \}$. Choose $A \= P_k^+ - P_k^-$. Then the path $\zeta \in \Omega \bigl( \Bsa(\H), \G\Bsa(\H) \bigr)$ given by
			\[
				\zeta(t) \= \biggl( t - \frac{1}{2} \biggr)P + A	\qquad \textup{for all } t \in [0, 1]
			\]
		satisfies $\mu(\zeta) = 1$.
		
		This is actually a particular case of what we wrote in Theorem~\ref{thm:conc-hominv}, but we observe that it can be used as well to declare which paths have spectral flow equal to $1$.
		
		We note that our formulation of this axiom corrects the statement of \cite[Theorem~5.7]{Les05}, in which there is clearly just an oversight: the condition of invertibility of $(I - P)A(I - P)$ is
		indeed missing there.
	\end{rmk}
	
	\begin{lem}\label{thm:lemmafava}
		Let $t_* \in \R$ and consider a path $T \in \Cscr^1\bigl( [t_* - \eps, t_* + \eps], \Bsa(\H) \bigr)$, for some $\eps > 0$. Suppose that $T$ has a unique
		regular crossing at $t = t_*$. Then
			\[
	       			\spfl \bigl( T, [t_* - \eps, t_* + \eps] \bigr) = \sgn \Gamma (T, t_*).
			\]
	\end{lem}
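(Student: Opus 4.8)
The plan is to localise the spectral flow at the single crossing $t_*$ and then to reduce everything to a first-order computation on the fixed subspace $V \= \ker T(t_*)$, where the crossing form lives. First I would note that, $t_*$ being the only crossing on $[t_*-\eps,t_*+\eps]$, each $T(t)$ with $t\neq t_*$ is injective, hence invertible, so that both endpoints lie in $\G\Bsa(\H)$ and Remark~\ref{rmk:spfl} gives
\[
	\spfl\bigl(T,[t_*-\eps,t_*+\eps]\bigr) = n^-\bigl(T(t_*-\eps)\bigr) - n^-\bigl(T(t_*+\eps)\bigr).
\]
On any crossing-free subinterval the eigenvalues of $T(t)$ stay away from $0$ by continuity, so $n^-\bigl(T(t)\bigr)$ is locally constant there; combined with the concatenation property this shows that the spectral flow is unchanged if I replace $\eps$ by any smaller radius. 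I may therefore shrink $\eps$ as needed.

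Next I would separate the eigenvalues that actually cross $0$. Since the spectrum of $T(t_*)$ is finite, $0$ is isolated in it, and I fix $\delta>0$ so that $0$ is its only eigenvalue in $[-\delta,\delta]$. The Riesz projection $P(t) \= \frac{1}{2\pi i}\oint_{\abs{z}=\delta}\bigl(z - T(t)\bigr)^{-1}\,\d z$ is then, for $\eps$ small, a $C^1$ family of orthogonal projections of constant rank $d \= \dim V$ with $P(t_*) = Q$, each $\im P(t)$ reduces $T(t)$, and the eigenvalues of $T(t)$ outside $[-\delta,\delta]$ never reach $0$ on the interval. Consequently $n^-\bigl(T(t)\bigr)$ differs from $n^-\bigl(T(t)|_{\im P(t)}\bigr)$ by a constant (the fixed number of negative eigenvalues outside $[-\delta,\delta]$). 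By Kato's transformation function I would produce a $C^1$ family of isometries $U(t)\colon V\to\im P(t)$ with $U(t_*)$ the inclusion $V\hookrightarrow\H$, and set $\tilde{T}(t) \= U(t)^*T(t)U(t)\in\Bsa(V)$, which is unitarily equivalent to $T(t)|_{\im P(t)}$ and hence shares its index.

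The heart of the argument is the first-order behaviour of $\tilde{T}$ at $t_*$. Because $T(t_*)|_V = 0$, the two cross terms in the derivative of $U(t)^*T(t)U(t)$ vanish at $t_*$, leaving
\[
	\tilde{T}(t_*) = 0, \qquad \dot{\tilde{T}}(t_*) = U(t_*)^*\dot{T}(t_*)U(t_*) = Q\dot{T}(t_*)Q|_{V} = \Gamma(T,t_*).
\]
Since the crossing is regular, $\Gamma(T,t_*)$ is non-degenerate, so the Taylor expansion $\tilde{T}(t_*+s) = s\,\Gamma(T,t_*) + o(s)$ forces $\tilde{T}(t_*+s)$ to have the same inertia as $s\,\Gamma(T,t_*)$ for $\abs{s}$ small: its Morse index $n^-$ equals $n^-\bigl(\Gamma(T,t_*)\bigr)$ for $s>0$ and $n^+\bigl(\Gamma(T,t_*)\bigr)$ for $s<0$. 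Substituting $s=\pm\eps$ into the first displayed identity, the constant from the outer eigenvalues cancels and I obtain
\[
	\spfl\bigl(T,[t_*-\eps,t_*+\eps]\bigr) = n^+\bigl(\Gamma(T,t_*)\bigr) - n^-\bigl(\Gamma(T,t_*)\bigr) = \sgn\Gamma(T,t_*),
\]
as claimed. I expect the only genuinely delicate points to be the construction of the $C^1$ spectral data $P(t)$ and $U(t)$ (Kato's analytic perturbation theory) and the control of the remainder $o(s)$; both are secured by the isolation of $0$ in the spectrum of $T(t_*)$ together with the non-degeneracy of the crossing form, which keeps the eigenvalues of $s\,\Gamma(T,t_*)$ bounded away from $0$ at rate $\abs{s}$.
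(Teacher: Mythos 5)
Your proof is correct, and it reaches the conclusion by a genuinely different route from the paper's. The paper fixes the splitting $\H = \H_{t_*} \oplus \H_{t_*}^{\perp}$ at the crossing instant, deforms the restricted path through the affine homotopy $F(s,t) = s\,T(t)|_{\H_{t_*}} + (1-s)(t-t_*)\dot{T}(t_*)|_{\H_{t_*}}$ (regularity of the crossing keeps the endpoints invertible along the deformation), and then evaluates the spectral flow of the model path $(t-t_*)\dot{T}(t_*)|_{\H_{t_*}}$ directly; the tools are homotopy invariance and additivity of the spectral flow under direct sums. You never invoke homotopy invariance: you isolate the eigenvalues near $0$ with the Riesz projection $P(t)$, transport the moving subspace $\im P(t)$ back to the fixed kernel via Kato's transformation function, and read the inertia of $\tilde{T}(t_* \pm \eps)$ off the first-order expansion $s\,\Gamma(T,t_*) + o(s)$, using only Remark~\ref{rmk:spfl} and concatenation. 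Your route is heavier on perturbation theory, but it buys rigour at exactly the point where the paper's proof is loosest: for $t \neq t_*$ the operator $T(t)$ does \emph{not} in general preserve the fixed splitting $\H_{t_*} \oplus \H_{t_*}^{\perp}$, so the additivity
\[
	\spfl \bigl( T, [t_*-\eps, t_*+\eps] \bigr) = \spfl \bigl( T|_{\H_{t_*}}, [t_*-\eps, t_*+\eps] \bigr) + \spfl \bigl( T|_{\H_{t_*}^{\perp}}, [t_*-\eps, t_*+\eps] \bigr)
\]
asserted in the paper needs an extra (standard but unstated) argument that the off-diagonal blocks, being $O(t-t_*)$ against an invertible block on $\H_{t_*}^{\perp}$, do not contribute; your spectral subspaces $\im P(t)$ genuinely reduce $T(t)$, so no such issue arises. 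Conversely, the paper's argument is shorter and stays entirely within the axiomatic properties of the spectral flow used throughout Appendix~\ref{app:A}. The only points you should make explicit in a final write-up are the $\Cscr^1$ dependence of $P(t)$ (contour integral of the resolvent over a circle staying in the resolvent set) and of the Kato unitaries $U(t)$ (solving $\dot{U} = [\dot{P},P]U$ with $[\dot{P},P]$ skew-adjoint), both of which are standard and available for merely $\Cscr^1$ paths.
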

	
	\begin{proof}
		Let $Q : \H \to \H$ be the orthogonal projection onto the kernel of $T(t_*)$. Since $t_*$ is a regular crossing instant for $T$, the operator $Q \dot{T}(t_*) Q|_{\H_{t_*}}$ is invertible on
		$\H_{t_*} \= \ker T(t_*)$. Therefore there exists a number $\beta > 0$ such that $Q\bigl( \dot{T}(t_*) + B \bigr)Q|_{\H_{t_*}}$ is also invertible on $\H_{t_*}$ for every $B \in \Bsa(\H)$
		such that $\norm{B} < \beta$. On the other hand, being $T(t_*)\rvert_{\H_{t_*}} = 0$, we may choose a number $\eps > 0$ such that
			\[
				\norm{\biggl( \frac{T(t) - T(t_*)}{t - t_*} - \dot{T}(t_*) \biggr) \biggr\rvert_{\H_{t_*}}} = \norm{\biggl( \frac{T(t)}{t - t_*} - \dot{T}(t_*) \biggr) \biggr\rvert_{\H_{t_*}}} < \beta
			\]
		for every $t \in [t_* + \eps, t_* + \eps] \setminus \{ t_* \}$.
		
		Define then a homotopy $F : [0,1] \times [t_* - \eps, t_* + \eps] \to \Bsa(\H_{t_*})$ by
			\[
				\begin{split}
					F(s, t) & \= s T(t) \bigr|_{\H_{t_*}} + (1 - s)(t - t_*) \dot{T}(t_*) \bigr|_{\H_{t_*}} \\
						& \,= (t - t_*) \biggl[ s \biggl( \frac{T(t)}{t - t_*} - \dot{T}(t_*) \biggr) \biggr\rvert_{\H_{t_*}} + \dot{T}(t_*) \bigr|_{\H_{t_*}} \biggr].
				\end{split}
			\]
		The previous choice of $\eps$ is thus sufficient to guarantee that $F(s, t)$ is invertible for every $s \in [0,1]$ and every $t \in [t_* - \eps, t_* + \eps]$. Hence, by the homotopy invariance
		of the spectral flow,
			\begin{equation} \label{eq:hidden}
				\spfl \bigl( T, [t_* - \eps, t_* + \eps] \bigr) = \spfl \bigl( (t - t_*) \dot{T}(t_*) \bigr|_{\H_{t_*}}, [t_* - \eps, t_* + \eps] \bigr).
			\end{equation}
		Here we actually use the fact that, for all $t \in [t_* - \eps, t_* + \eps]$, the operator $T(t)$ splits into $T|_{\H_{t_*}}(t) + T|_{\H_{t_*}^\perp}(t)$ on $\H = \H_{t_*} \oplus \H_{t_*}^\perp$
		and that the spectral flow is compatible with this splitting, \ie
			\[
				\spfl \bigl( T \bigr|_{\H_{t_*}} + T \bigr|_{\H_{t_*}^\perp}, [t_* - \eps, t_* + \eps] \bigr) =
				\spfl \bigl( T \bigr|_{\H_{t_*}}, [t_* - \eps, t_* + \eps] \bigr) + \spfl \bigl( T \bigr|_{\H_{t_*}^\perp}, [t_* - \eps, t_* + \eps] \bigr).
			\]
		The last addendum is of course zero and this justifies equality~\eqref{eq:hidden}.
		
		Finally, by Remark~\ref{rmk:spfl},
			\[
				\begin{split}
				\spfl \bigl( (t - t_*) \dot{T}(t_*) \bigr|_{\H_{t_*}}, [t_* - \eps, t_* + \eps] \bigr)
				& = n^- \bigl( -\eps \dot{T}(t_*) \bigr|_{\H_{t_*}} \bigr) - n^- \bigl( \eps\dot{T}(t_*) \bigr|_{\H_{t_*}} \bigr) \\
				& = n^+ \bigl( \dot{T}(t_*) \bigr|_{\H_{t_*}} \bigr) - n^- \bigl( \dot{T}(t_*) \bigr|_{\H_{t_*}} \bigr) \\
				& = \sgn \dot{T}(t_*) \bigr|_{\H_{t_*}} \\
				& = \sgn Q \dot{T}(t_*) Q\rvert_{\H_{t_*}}. \qedhere
				\end{split} 
			\]
	\end{proof}
	
	From this Lemma immediately follows the next Proposition.
	
	\begin{prop}\label{thm:sfcasoregolare}
		Let $T \in \Cscr^1\bigl( [0,1], \Bsa(\H) \bigr)$ be a regular curve with invertible endpoints. Then the spectral flow is computed as:
			\begin{equation} \label{eq:sfcasoregolare}
				\spfl \bigl( T, [0, 1] \bigr) =\ \sum_{\mathclap{\substack{t_* \in [0,1] \\ t_* \textup{ crossing}}}}\ \sgn \Gamma (T, t_*).
			\end{equation}
	\end{prop}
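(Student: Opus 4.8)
The plan is to reduce the global spectral flow to a sum of local contributions, each of which is already computed by Lemma~\ref{thm:lemmafava}; the only real work is to organise the crossings and to invoke the additivity of the spectral flow.

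First I would show that there are only finitely many crossings. Since $T$ is a regular curve, every crossing $t_*$ is regular, \ie the crossing form $\Gamma(T, t_*) = Q\dot{T}(t_*)Q|_{\ker T(t_*)}$ is non-degenerate. This non-degeneracy makes $t_*$ isolated: to first order the eigenvalues of $T$ that vanish at $t_*$ leave the value $0$ with velocities equal to the (non-zero) eigenvalues of $\Gamma(T, t_*)$, so $Q T(t) Q|_{\ker T(t_*)}$ is invertible for $0 < |t - t_*|$ small, while $T(t)|_{(\ker T(t_*))^\perp}$ stays invertible near $t_*$ by continuity; hence $T(t)$ is invertible on a punctured neighbourhood of $t_*$. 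This is precisely the first-order mechanism underlying the choice of $\eps$ in the proof of Lemma~\ref{thm:lemmafava}. The crossing set is thus a set of isolated points which is closed in the compact interval $[0,1]$, and is therefore finite; since the endpoints are invertible by hypothesis, I may list the crossings as $0 < t_1 < \dots < t_N < 1$.

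Next I would choose $\eps > 0$ so small that the intervals $[t_k - \eps, t_k + \eps]$ are pairwise disjoint, lie in $(0,1)$, and each contains $t_k$ as its unique crossing. Writing $[0,1]$ as the concatenation of these intervals together with the complementary gaps, and noting that all the subdivision points $t_k \pm \eps$ (as well as $0$ and $1$) are invertible instants, each subinterval is an admissible path in $\Omega(\Bsa(\H), \G\Bsa(\H))$. The concatenation axiom (property (i) of Theorem~\ref{thm:conc-hominv}) then splits $\spfl(T, [0,1])$ into the sum of the spectral flows over these subintervals.

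Finally I evaluate each piece. On a gap the operator $T(t)$ is invertible throughout, so by Definition~\ref{def:spectralflow} the extended coindex $\npext$ is constant and the contribution is $0$; on each interval $[t_k - \eps, t_k + \eps]$ Lemma~\ref{thm:lemmafava} applies directly and gives $\spfl(T, [t_k - \eps, t_k + \eps]) = \sgn \Gamma(T, t_k)$. Summing yields $\sum_{k=1}^{N} \sgn \Gamma(T, t_k)$, which is exactly the right-hand side of \eqref{eq:sfcasoregolare}. The main (indeed only) delicate point is the isolation-and-finiteness step; once that is secured, the remainder is pure bookkeeping with the additivity property.
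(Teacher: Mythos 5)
Your proposal is correct and follows essentially the same route as the paper's proof: regularity of the crossing forms forces the crossings to be isolated (the paper cites the Inverse Function Theorem where you unpack the first-order eigenvalue mechanism), compactness of $[0,1]$ gives finiteness, and the concatenation property reduces everything to the local computation of Lemma~\ref{thm:lemmafava}. Your write-up is simply a more detailed version of the same argument.
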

		
	\begin{proof}
		Since every crossing is regular by assumption, the corresponding crossing forms are all non-degenerate and we can use the Inverse Function Theorem to deduce that the
		crossings are isolated. Then we can apply Lemma~\ref{thm:lemmafava} to each isolated crossing and sum up every contribution by means of the concatenation property of the
		spectral flow. The compactness of the interval $[0,1]$ ensures that there are only finitely many crossing and that the sum on the right-hand side of \eqref{eq:sfcasoregolare} is
		well defined.
	 \end{proof}
	 	  
	In the following proposition we investigate the parity of the spectral flow of an affine path of Hermitian matrices. 

	\begin{prop}\label{prop:MainA}
		Let $\H$ be a complex Hilbert space of dimension $4n$, let $A \in \Bsa(\H) \setminus \G\Bsa(\H)$ be a real symmetric non-invertible matrix and take $C \in \G\Bsa(\H)$ of the form
		$C \= iB$, where $B$ is a real skew-symmetric invertible matrix. Consider the affine path $D : [0, +\infty) \to \Bsa(\H)$ defined by
			\[
				D(t) \= A + tC.
			\]
		Let $E_\lambda \= \ker (B^{-1}A + \lambda I)$ be the eigenspace of $-B^{-1}A$ relative to the eigenvalue $\lambda$ and let $Q_\lambda : \H \to \H$ be the eigenprojection onto
		$E_\lambda$. We assume that
			\begin{enumerate}[(H1)]
				\item The quadratic form $Q_\lambda C Q_\lambda \bigr|_{E_\lambda}$ is non-degenerate for every $\lambda \in \sigma(-B^{-1}A)$;
				\item $-B^{-1}A$ is diagonalisable;
				\item $\sigma(-B^{-1}A) \subset i\R$ and it is symmetric with respect to the real axis;
				\item $\nu(A)$ is even.
	 		\end{enumerate}
	    	Then there exist $\eps > 0$ and  $T > \eps$ such that 
			\begin{enumerate}[(T1)]
				\item The instant $t = 0$ is the only crossing for $D$ on $[0, \eps]$;
				\item $\spfl \bigl( D, [\eps, T_1] \bigr) = \spfl \bigl( D, [\eps, T_2] \bigr)$ for all $T_1, T_2 \geq T$;
				\item $\spfl \bigl( D, [\eps, T] \bigr)$ is even.
			\end{enumerate}
	\end{prop}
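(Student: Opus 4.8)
The plan is to translate the whole problem into the spectral data of the real matrix $-B^{-1}A$ and then read off the spectral flow from the crossing forms. First I would observe that, since $B$ is invertible, $\ker D(t)=\ker(A+tiB)=\ker(B^{-1}A+tiI)=E_{it}$, so that $t_*\in[0,+\infty)$ is a crossing for $D$ precisely when $it_*\in\sigma(-B^{-1}A)$. By (H3) the spectrum is purely imaginary, hence the crossings are exactly the nonnegative reals $t_*$ with $it_*\in\sigma(-B^{-1}A)$; being $D$ affine, $\det D(t)$ is real-analytic, so its zeros are isolated and therefore finite in number. Because $A$ is singular and $B$ is invertible, $\ker A=\ker(B^{-1}A)\neq 0$, so $0\in\sigma(-B^{-1}A)$ and $t_*=0$ is itself a crossing.

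For (T1) I would simply choose $\eps>0$ smaller than the least strictly positive crossing, which exists since the crossings form a finite set; then $t_*=0$ is the only crossing on $[0,\eps]$. For (T2), note that $D(t)/t\to iB\in\G\Bsa(\H)$ as $t\to+\infty$, so $D(t)$ is invertible for $t$ large; taking $T$ larger than the greatest crossing, the interval $(T,+\infty)$ is crossing-free with invertible values, and by the concatenation property the spectral flow vanishes there. Hence $\spfl(D,[\eps,T_1])=\spfl(D,[\eps,T_2])$ for all $T_1,T_2\geq T$.

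The core is (T3). For a strictly positive crossing $t_*$, diagonalisability (H2) gives $\ker D(t_*)=E_{it_*}$, and the crossing operator $\Gamma(D,t_*)=QCQ|_{\ker D(t_*)}$ is, in the sense of Remark~\ref{rmk:crossform}, the restriction of the Krein form $\langle C\,\cdot,\cdot\rangle$ to $E_{it_*}$, which is nondegenerate by (H1); thus every positive crossing is regular. Applying Lemma~\ref{thm:lemmafava} at each of them and concatenating (Proposition~\ref{thm:sfcasoregolare}) yields $\spfl(D,[\eps,T])=\sum_{t_*>0}\sgn\Gamma(D,t_*)$. Since the signature of a nondegenerate form is congruent modulo $2$ to the dimension of its domain, this is $\equiv\sum_{t_*>0}\dim E_{it_*}\pmod 2$. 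Now $-B^{-1}A$ is a real matrix, so its nonreal (here purely imaginary) eigenvalues occur in conjugate pairs $it_*,-it_*$ with equal multiplicities; together with (H3) this gives $4n=\dim E_0+2\sum_{t_*>0}\dim E_{it_*}$, whence $\sum_{t_*>0}\dim E_{it_*}=2n-\tfrac12\dim E_0$. Because $E_0=\ker A$ has dimension $\nu(A)$, which is even by (H4), the quantity $\tfrac12\dim E_0$ is an integer and the parity of $\spfl(D,[\eps,T])$ is pinned down by it.

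The hard part will be precisely this last parity bookkeeping around the crossing at $t=0$. I would cross-check the count against the endpoint formula $\spfl(D,[\eps,T])=n^-(D(\eps))-n^-(D(T))$ of Remark~\ref{rmk:spfl}, using $n^-(D(T))=n^-(iB)=2n$ for large $T$ and the fact that, the Krein form on $E_0$ having signature zero, the $\nu(A)$ null eigenvalues of $A$ split into $\tfrac12\nu(A)$ positive and $\tfrac12\nu(A)$ negative ones as $t$ leaves $0$, so that $n^-(D(\eps))=n^-(A)+\tfrac12\nu(A)$. Reconciling the two computations is the delicate step: it forces a relation between $n^-(A)$ and $\tfrac12\dim E_0$, and it is here that the evenness must be extracted with care from the hypothesis that $\nu(A)$ is even together with the signature-zero property of the zero eigenspace. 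I expect the bulk of the work, and essentially the only risk of error, to lie in this endpoint analysis rather than in the regular positive crossings.
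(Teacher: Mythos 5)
Your handling of (T1), (T2) and of the strictly positive crossings is essentially the paper's own argument: isolate the crossings by analyticity, stabilise the flow for large $t$ (the paper additionally proves $\sgn D(t)=\sgn C$ for $t\geq T$, which you will need anyway in your cross-check to justify $n^-\bigl(D(T)\bigr)=2n$, so this should be an explicit step rather than an assertion), identify the crossing form at $t_*>0$ with the Krein form on $E_{it_*}$, and use (H1) together with Lemma~\ref{thm:lemmafava} and Proposition~\ref{thm:sfcasoregolare} to get $\spfl\bigl(D,[\eps,T]\bigr)\equiv\sum_{t_*>0}\dim E_{it_*}\pmod 2$. Up to that point the proposal is correct and coincides with the paper's route.

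The gap is exactly where you place it, and neither of your two computations closes it. Your (correct) pairing of conjugate eigenvalues gives
\[
\spfl\bigl(D,[\eps,T]\bigr)\ \equiv\ 2n-\tfrac12\,\nu(A)\ \equiv\ \tfrac12\,\nu(A)\pmod 2,
\]
and evenness of $\nu(A)$ does not make $\tfrac12\nu(A)$ even; one would need $\nu(A)\equiv 0\pmod 4$. The proposed reconciliation with the endpoint formula $\spfl\bigl(D,[\eps,T]\bigr)=n^-(A)+\tfrac12\nu(A)-2n$ of Remark~\ref{rmk:spfl} does not produce (T3) either: equating the two expressions only yields $n^-(A)\equiv 0\pmod 2$, which is the conclusion of Theorem~\ref{prop:mainB}, not the evenness of the spectral flow. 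So the proposal proves (T1), (T2) and the congruence $\spfl\equiv\tfrac12\nu(A)\pmod 2$, but not (T3). Note that the paper's proof reaches evenness only through the identity $\sum_{\lambda\in\sigma(-B^{-1}A)\cap i[\eps,+\infty)}\dim E_\lambda=2n-\nu(A)$, whereas your count (which agrees with the formula $2n-\tfrac12\dim\H_0$ used in the proof of Theorem~\ref{prop:mainB}) gives $2n-\tfrac12\nu(A)$; the obstruction you ran into is therefore genuine and not an artefact of your bookkeeping. A concrete test: on $\H=\C^4$ take $A=\diag(1,0,1,0)$ and $B=J_4$; then (H1)--(H4) hold with $\nu(A)=2$, the only positive crossing is $t_*=1$ with one-dimensional $E_i$, and $\spfl\bigl(D,[\eps,T]\bigr)=n^-(A)+1-2=-1$ is odd. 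No refinement of the endpoint analysis will extract (T3) from the stated hypotheses; what your argument does establish, and what the rest of the paper actually uses, is the pair of congruences $\spfl\equiv\tfrac12\nu(A)$ and $\spfl\equiv n^-(A)+\tfrac12\nu(A)-2n$ modulo $2$, hence the evenness of $n^-(A)$.
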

	
	\begin{proof}
		Statement (T1) follows by assumption (H1). Indeed, $t = 0$ is a crossing instant because $A$ is singular and it is regular because the crossing form
		$Q_0 C Q_0|_{E_0}$ is non-degenerate. Thus, by the Inverse Function Theorem, it is isolated and the number $\eps > 0$ claimed in the first thesis exists.
	
		In order to prove (T2), we observe that there exist $T > \eps$ such that
			\[
				\sgn D(t) = \sgn C, \qquad \forall\, t \geq T.
			\]
		To prove this claim, we analyse the following two cases (note that $\sigma(C) \subset \R \setminus \{0\}$, being $C$ hermitian and invertible):
			\begin{itemize}
				\item $\lambda_* \in \sigma(C) \cap \R^-$. If $u_* \in \ker(C - \lambda_* I)$ is an eigenvector related to $\lambda_*$, we have
						\[
							\langle D(t) u_*, u_* \rangle = \langle A u_*, u_*\rangle + t \lambda_* \norm{u_*}^2.
	     					\]
					Thus
						\[
							\sup_{\mathclap{\substack{\norm{u} = 1\\ u\, \in\, \ker(C - \lambda_*I)}}}\ \langle D(t) u, u \rangle \leq \lambda_{\textup{max}} + t \lambda_*,
						\]
					where $\lambda_{\textup{max}}$ is the maximum of the quadratic form $\langle Au,u \rangle$ on the unit sphere of the eigenspace of $C$ relative to $\lambda_*$
					(which is attained by Weierstra\ss\ theorem). If we choose $T_{\textup{max}} \= \dfrac{1 + \lambda_{\textup{max}}}{\abs{\lambda_*}}$, we obtain that
						\[
							\sup_{\mathclap{\substack{\norm{u} = 1\\ u\, \in\, \ker(C - \lambda_*I)}}}\ \langle D(t) u, u \rangle \leq -1, \qquad \forall t \geq T_{\textup{max}},
						\]
					so that $\lambda_*$ eventually defines a negative eigendirection for $D(t)$.

	   			\item $\lambda_* \in \sigma(C) \cap \R^+$. If $u_* \in \ker(C - \lambda_*I)$ is an eigenvector related to $\lambda_*$, we have
						\[
							\langle D(t) u_*, u_* \rangle = \langle A u_*, u_*\rangle + t \lambda_* \norm{u_*}^2.
						\]
					Thus
						\[
							\sup_{\mathclap{\substack{\norm{u} = 1\\ u\, \in\, \ker(C - \lambda_*I)}}}\  \langle D(t) u, u \rangle \geq \lambda_{\textup{min}} + t \lambda_*,
						\]
					where $\lambda_{\textup{min}}$ is the minimum of the quadratic form $\langle Au, u \rangle$ on the unit sphere of the eigenspace of $C$ relative to $\lambda_*$
					(which is attained by Weierstra\ss\ theorem). If we choose $T_{\textup{min}} \= \dfrac{1 - \lambda_{\textup{min}}}{\abs{\lambda_*}}$, we obtain that
						\[
							\sup_{\mathclap{\substack{\norm{u} = 1\\ u\, \in\, \ker(C - \lambda_*I)}}}\ \langle D(t) u, u \rangle \geq 1, \qquad \forall t \geq T_{\textup{min}},
						\]
					so that $\lambda_*$ eventually defines a positive eigendirection for $D(t)$.
	 		\end{itemize}
		Define then $T \= \max\{ T_{\textup{min}}, T_{\textup{max}} \}$. Without loss of generality we may assume that $T_1 < T_2$. By means of the concatenation property of the spectral flow
		we get
			\[
				\spfl \bigl( D, [\eps, T_2] \bigr) = \spfl \bigl( D, [\eps, T_1] \bigr) + \spfl \bigl( D, [T_1, T_2] \bigr) = \spfl \bigl( D, [\eps, T_1] \bigr),
			\]
		where the last equality comes from the fact that $D(t)$ is an isomorphism for every $t \geq T$.
		As we showed, indeed, for any $t \geq T$ each eigenvalue of $C$ determines an eigendirection (and hence an eigenvalue) of $D(t)$ of the same sign. Being $C$ invertible, the claim
		follows.

		We now prove (T3). Let us first make the link between $t_*$ and $\lambda$ explicit: writing
			\[
				D(t) = -B(-B^{-1}A - itI)	\qquad \forall t \in [0, +\infty)
			\]
		it is clear that $t_*$ is a crossing for $D$ if and only if $\lambda = it_*$ is an eigenvalue of $-B^{-1}A$. Now, by construction both $\eps$ and $T$ are not crossing instants for $D$, hence
		we can apply Proposition~\ref{thm:sfcasoregolare} to $\spfl \bigl( D, [\eps, T] \bigr)$ and write
			\begin{equation}\label{eq:proof1}
				\spfl \bigl( D, [\eps, T] \bigr) = \sum_{\mathclap{\substack{t_*\, \in\, [\eps, T] \\ t_* \textup{ crossing}}}}\ \sgn \Bigl( Q_\lambda C Q_\lambda \bigr|_{E_\lambda} \Bigr).
			\end{equation}
		(Note that this summation is meaningful because of our brief discussion a few lines above.)
		Since the crossing forms are non-degenerate by (H1) and since $-B^{-1}A$ is diagonalisable by (H2), we have that
			\begin{equation} \label{eq:proof2}
				\begin{split}
					\sgn \Bigl( Q_\lambda C Q_\lambda \bigr|_{E_\lambda} \Bigr) & \=
								n^+\Bigl( Q_\lambda C Q_\lambda \bigr|_{E_\lambda} \Bigr) - n^-\Bigl( Q_\lambda C Q_\lambda \bigr|_{E_\lambda} \Bigr) \\
								& \equiv n^+\Bigl( Q_\lambda C Q_\lambda \bigr|_{E_\lambda} \Bigr) + n^-\Bigl( Q_\lambda C Q_\lambda \bigr|_{E_\lambda} \Bigr) \qquad \mod 2 \\
								& = \dim E_\lambda
				\end{split}
			\end{equation}
		for all $\lambda \in \sigma(-B^{-1}A)$. As a consequence of \eqref{eq:proof1} and \eqref{eq:proof2} we infer that
			\begin{equation}\label{eq:utile}
				\spfl \bigl( D, [\eps, T] \bigr)\ \equiv\ \sum_{\mathclap{\substack{t_*\, \in\, [\eps, T] \\ t_* \textup{ crossing}}}}\ \dim E_\lambda
										=\qquad \sum_{\mathclap{\lambda\, \in\, \sigma(-B^{-1}A)\, \cap\, i [\eps, +\infty)}}\ \dim E_\lambda \quad 	\mod 2.
			\end{equation}
		Finally, taking into account (H2), (H3) and (H4), we deduce
			\[
				\sum_{\mathclap{\lambda\, \in\, \sigma(-B^{-1}A)\, \cap\, i [\eps, +\infty)}}\ \dim E_\lambda = 2n - \nu(A) \equiv 0 \quad \mod 2. \qedhere
			\]
	\end{proof}
	
	The next corollary is directly derived from the previous proposition and it deals with the case when the matrix $A$ is invertible.
	
	\begin{cor} \label{cor:MainA}
		In the same setting of Proposition~\ref{prop:MainA}, assume that $A \in \G\Bsa(\H)$ and that (H1), (H2) and (H3) hold.
		Then there exists $T > 0$ such that 
			\begin{enumerate}
				\item[(T2')] $\spfl \bigl( D, [0, T_1] \bigr) = \spfl \bigl( D, [0, T_2] \bigr)$ for all $T_1, T_2 \geq T$;
				\item[(T3')] $\spfl \bigl( D, [0, T] \bigr)$ is even.
			\end{enumerate}
	\end{cor}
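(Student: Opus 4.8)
The plan is to re-run the argument of Proposition~\ref{prop:MainA} almost verbatim, exploiting the two simplifications provided by the invertibility of $A$: since $D(0) = A$ is an isomorphism the instant $t = 0$ is no longer a crossing, so I may let the spectral flow start directly at $0$ rather than at some $\eps > 0$; and the nullity contribution $\nu(A)$ that appeared in the final count of (T3) now simply vanishes. Accordingly, statement (T1) becomes vacuous and only the analogues (T2') and (T3') of (T2) and (T3) remain.

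First I would establish (T2'). The key observation is that the reasoning used for (T2) in Proposition~\ref{prop:MainA} never invokes the singularity of $A$: it rests solely on the invertibility of $C$ and on the eigenvalue estimates obtained by splitting $\sigma(C)$ into its positive and negative parts. Hence the identical argument yields a threshold $T > 0$ with $\sgn D(t) = \sgn C$ for all $t \geq T$, and since $C$ is invertible this forces $D(t)$ to be an isomorphism for every $t \geq T$. As $D(0)$, $D(T_1)$ and $D(T_2)$ are then all invertible for $T_1, T_2 \geq T$, the spectral flows $\spfl(D, [0, T_i])$ are well defined, and the concatenation property combined with $\spfl(D, [T_1, T_2]) = n^-(D(T_1)) - n^-(D(T_2)) = 0$ (Remark~\ref{rmk:spfl}) gives (T2').

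Next I would prove (T3'). Because neither $0$ nor $T$ is a crossing, the restriction of $D$ to $[0, T]$ is a regular curve (by (H1)) with invertible endpoints, so Proposition~\ref{thm:sfcasoregolare} applies and expresses $\spfl(D, [0, T])$ as the sum of $\sgn(Q_\lambda C Q_\lambda|_{E_\lambda})$ over the crossings $t_* \in (0, T)$, that is over $\lambda = it_* \in \sigma(-B^{-1}A) \cap i(0, +\infty)$. Just as in (T3), each crossing form is non-degenerate by (H1) and diagonalisability (H2) yields $\sgn(Q_\lambda C Q_\lambda|_{E_\lambda}) \equiv \dim E_\lambda \pmod{2}$, so that
\[
	\spfl(D, [0, T]) \equiv \sum_{\lambda \in \sigma(-B^{-1}A) \cap i(0,+\infty)} \dim E_\lambda \pmod{2}.
\]
Since $A$ is invertible, $0$ is not an eigenvalue of $-B^{-1}A$; by (H3) the spectrum lies on the imaginary axis and is symmetric about the real one, and as $-B^{-1}A$ is real its conjugate eigenspaces have equal dimension, so the nonzero eigenvalues pair off as $\pm it_*$ and the sum above equals $\tfrac{1}{2}\dim \H = 2n$. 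As $2n$ is even, (T3') follows.

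The main obstacle is essentially bookkeeping rather than a genuine difficulty, since the corollary is a strict specialisation of Proposition~\ref{prop:MainA}. The two points needing care are verifying that the endpoints $0$ and $T$ are regular --- so that Proposition~\ref{thm:sfcasoregolare} and the concatenation rule are legitimately applicable --- and checking that the eigenvalue count collapses to exactly $2n$ because the kernel term $\nu(A)$ is now absent. It is precisely this vanishing that makes the parity conclusion immediate, whereas in the singular case one must track the half-dimension of $\ker A$ separately.
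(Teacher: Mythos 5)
Your proposal is correct and follows exactly the route the paper takes: the paper's own proof simply notes that invertibility of $A$ removes the crossing at $t=0$ and forces $\nu(A)=0$, and then invokes Proposition~\ref{prop:MainA} ``with obvious modifications''. You have merely spelled out those modifications --- the unchanged (T2) argument and the eigenvalue count collapsing to $2n$ --- which is the intended reading.
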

	
	\begin{proof}
		Since $A$ is invertible, the instant $t = 0$ is not a crossing for $L$ and $\nu(A) = 0$. Consequently, we can compute the spectral flow of $L$ directly on the interval $[0, T]$ and apply
		Proposition~\ref{prop:MainA} with obvious modifications.
	\end{proof}

		\subsection{Root functions, partial signatures and spectral flow} \label{subsec:parsgn}
	
	The aim of this section is to derive a formula for computing the spectral flow of an affine path at a possibly degenerate (\ie non-regular) crossing instant.
	The main references are \cite{MR2057171, arxiv} and references therein.
	
	Let $t_* \in \R$, $\eps > 0$ and $T : [t_* - \eps, t_* + \eps] \to \Bsa(\H)$ be a real-analytic path such that $t = t_*$ is an isolated crossing for $T$. We are interested in computing the
	\lq\lq jumps\rq\rq\ of the functions $n^+ \bigl( T(t) \bigr)$ and $n^- \bigl( T(t) \bigr)$ as $t$ passes through $t_*$ also in the degenerate case, in order to generalise Lemma~\ref{thm:lemmafava}.
	Recall that, given $k \in \N \setminus \{0\}$, a smooth map $f : [t_* - \eps, t_* + \eps] \to \H$ is said to have a \emph{zero of order $k$} at $t = t_*$ if
	$f(t_*) = f'(t_*) = \ldots = f^{(k - 1)}(t_*) = 0$ and $f^{(k)}(t_*) \neq 0$. We recall that in this case both the eigenvalues and the eigenvectors of $T(t)$ are real-analytic functions defined on the
	domain of $T$ (see \cite[Chapter~2]{kato}); we denote them by $\lambda_i(t)$ and $v_i(t)$ respectively, for $i \in \{ 1, \dotsc, \dim\H \}$. Moreover, if $\lambda_i(t)$ vanishes at $t = t_*$ for
	some index $i$ then it has a zero of finite order, and for each $i$ the $v_i$'s are pairwise orthogonal unit eigenvectors relative to the $\lambda_i(t)$'s.
	
	\begin{defn} \label{thm:defroot}
		A \emph{root function} for $T(t)$ at $t = t_*$ is a smooth map $u : [t_* - \eps, t_* + \eps] \to \H$ such that $u(t_*) \in \ker T(t_*)$.
		The \emph{order} $\ord(u)$ of the root function $u$ is the (possibly infinite) order of the zero at $t = t_*$ of the map $t \mapsto T(t) u(t)$.
	\end{defn}
	
	\noindent
	In correspondence of the (possibly non-regular) crossing instant $t_*$ for $T$ we define, for every $k \in \N \setminus \{ 0 \}$, a descending filtration $(\W_k)$ of vector spaces
	$\W_k \subset \H$ and a sequence $(\mathcal{B}_k)$ of sesquilinear forms $\mathcal{B}_{k} : \W_{k}\times \W_{k} \to \C$ as follows:
		\begin{gather}
			\W_k \= \Set{u_* \in \H | \exists \textup{ a root function $u$ with $\ord(u) \geq k$ and $u(t_*) = u_*$} }, \notag \\
			\mathcal{B}_k(u_*, v_*) \=  \frac{1}{k!} \biggl\langle \frac{\d^k}{\d t^k} \bigl[ T(t)u(t) \bigr] \Bigr|_{t\, =\, t_*}, v_* \biggr\rangle \qquad \forall\, u_*, v_* \in \W_k, \label{eq:Bk}
		\end{gather}
	where $u$ in \eqref{eq:Bk} is any root function with $\ord(u) \geq k$ and $u(t_*) = u_*$. The right-hand side of the equality in \eqref{eq:Bk} is well defined and indeed it turns out to be
	independent of the choice of the root function $u$ (see \cite[Proposition 2.4]{arxiv}).
	
	\begin{defn} \label{thm:defpartsign}
		For all $k \in \N \setminus \{ 0 \}$, the integer number
			\[
				\sgn_k(T, t_*) \= \sgn \mathcal{B}_k
			\]
		is called the \emph{$k$-th partial signature} of $T(t)$ at $t = t_*$.
	\end{defn}

	\begin{prop} \label{thm:central}
		Let $t_* \in \R$, $\eps > 0$ and $T : [t_* - \eps, t_* + \eps] \to \Bsa(\H)$ be a real-analytic path having a unique (possibly non-regular) crossing at $t = t_*$.
		Then
			\begin{enumerate}[(i)]
				\item $\W_k = \gen \Set{ v_i(t_*) \in \H | \lambda_i^{(j)}(t_*) = 0 \textup{ for all } j < k \textup{ and } \lambda_i^{(k)}(t_*) \neq 0 }$;
				\item If $v \in \W_k$ is an eigenvector of $\lambda(t_*)$ then $\mathcal{B}_k(v, w) = \frac{1}{k!}\lambda^{(k)}(t_*)\langle v, w \rangle$, for all $w \in \W_k$;
				\item $\displaystyle \spfl \bigl( T, [t_* - \eps, t_* + \eps] \bigr) = \sum_{k = 1}^{+\infty}\sgn_{2k-1}(T, t_*)$, where the sum has only finitely many non-zero terms.
			\end{enumerate}
	\end{prop}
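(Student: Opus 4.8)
The plan is to reduce everything to an analytic simultaneous diagonalization of the path. Since $T$ is a real-analytic path of Hermitian matrices on a finite-dimensional space, Rellich's theorem (the one-parameter case of Kato's analytic perturbation theory, \cite[Chapter~2]{kato}) provides real-analytic eigenvalues $\lambda_i(t)$ together with a real-analytic orthonormal frame of eigenvectors $v_i(t)$ satisfying $T(t)v_i(t)=\lambda_i(t)v_i(t)$ on the whole interval $[t_*-\eps,t_*+\eps]$. Because the crossing at $t_*$ is isolated, no branch $\lambda_i$ is identically zero, so each branch vanishing at $t_*$ does so to some finite order $m_i$; I set $m_i=0$ when $\lambda_i(t_*)\neq0$. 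All three assertions are then read off from these branches.

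For (i) I would prove $\W_k=\gen\{v_i(t_*):m_i\ge k\}$, that is, $\W_k$ is the span of the eigenvectors whose branch vanishes to order at least $k$. The inclusion $\supseteq$ is immediate: the root function $u(t)=v_i(t)$ gives $T(t)u(t)=\lambda_i(t)v_i(t)$, which vanishes to order $m_i$, so $v_i(t_*)\in\W_k$ whenever $m_i\ge k$. For the reverse inclusion, write a general root function as $u(t)=\sum_i c_i(t)v_i(t)$ with analytic coefficients; then $\langle T(t)u(t),v_j(t)\rangle=\lambda_j(t)c_j(t)$, so $\ord(u)\ge k$ forces each product $\lambda_j c_j$ to vanish to order $\ge k$. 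Since $\lambda_j$ vanishes to order exactly $m_j$, the coefficient $c_j$ must vanish to order $\ge k-m_j$; hence $c_j(t_*)=0$ whenever $m_j<k$, and $u(t_*)$ lies in $\gen\{v_i(t_*):m_i\ge k\}$.

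For (ii) I would take $v=v_i(t_*)\in\W_k$, so $m_i\ge k$, and use the root function $u(t)=v_i(t)$; this is legitimate because $\mathcal{B}_k$ does not depend on the chosen root function (\cite[Proposition~2.4]{arxiv}). Differentiating $T(t)u(t)=\lambda_i(t)v_i(t)$ by the Leibniz rule and using $\lambda_i^{(j)}(t_*)=0$ for $j<k$ annihilates every term except the top one, leaving $\frac{\d^k}{\d t^k}\bigl[T(t)u(t)\bigr]\big|_{t_*}=\lambda_i^{(k)}(t_*)\,v$, whence $\mathcal{B}_k(v,w)=\frac{1}{k!}\lambda_i^{(k)}(t_*)\langle v,w\rangle$, with $\lambda_i^{(k)}(t_*)\in\R$ since the eigenvalues of a Hermitian matrix are real.

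Finally, for (iii) I would compute both sides in the eigenbasis and match them. By (ii) and orthonormality the frame $\{v_i(t_*)\}$ diagonalizes $\mathcal{B}_k$ on $\W_k$, so $\sgn_k(T,t_*)=\sum_{i:\,m_i=k}\sgn\lambda_i^{(k)}(t_*)$, since the branches with $m_i>k$ contribute zero. On the other hand, by Remark~\ref{rmk:spfl}, $\spfl\bigl(T,[t_*-\eps,t_*+\eps]\bigr)=n^-\bigl(T(t_*-\eps)\bigr)-n^-\bigl(T(t_*+\eps)\bigr)$, and for $\eps$ small only the vanishing branches change sign across $t_*$. A branch with $\lambda_i(t)\sim\frac{1}{m_i!}\lambda_i^{(m_i)}(t_*)(t-t_*)^{m_i}$ keeps its sign when $m_i$ is even (no contribution) and reverses it when $m_i$ is odd, contributing exactly $\sgn\lambda_i^{(m_i)}(t_*)$; summing over the odd orders $m_i=2k-1$ yields $\spfl\bigl(T,[t_*-\eps,t_*+\eps]\bigr)=\sum_{k\ge1}\sum_{i:\,m_i=2k-1}\sgn\lambda_i^{(2k-1)}(t_*)=\sum_{k\ge1}\sgn_{2k-1}(T,t_*)$, a finite sum because only finitely many branches vanish, each to finite order. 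The main obstacle is really the bookkeeping in (i), namely the order-of-vanishing estimate for the coefficients $c_j$, together with pinning down the sign convention in (iii) so that an odd-order branch contributes $+\sgn\lambda_i^{(m_i)}(t_*)$ rather than its negative; once the analytic diagonalization is in place it does all the heavy lifting and makes the even-order branches manifestly invisible to both the spectral flow and the odd partial signatures.
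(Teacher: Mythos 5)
Your proof is correct, and it is worth saying up front that the paper itself does not actually prove this proposition: its entire ``proof'' is the sentence that the statement follows verbatim from \cite[Proposition~2.9, Corollary~2.14]{arxiv}, with the observation that those results survive the passage to a complex Hilbert space. What you have written is, in substance, the standard argument behind those cited results: Rellich's analytic selection theorem (\cite[Chapter~2]{kato}) gives the real-analytic branches $\lambda_i(t)$ and the orthonormal analytic frame $v_i(t)$; the expansion $u(t)=\sum_i c_i(t)v_i(t)$ together with $\langle T(t)u(t),v_j(t)\rangle=\lambda_j(t)c_j(t)$ and the order-of-vanishing bookkeeping yields (i); the Leibniz computation on $T(t)v_i(t)=\lambda_i(t)v_i(t)$ yields (ii); and the sign analysis of the branches across $t_*$ (even-order branches invisible, odd-order branches contributing $\sgn\lambda_i^{(m_i)}(t_*)$ to $n^-\bigl(T(t_*-\eps)\bigr)-n^-\bigl(T(t_*+\eps)\bigr)$) yields (iii). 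All the delicate points are handled: the isolated crossing forces each vanishing branch to vanish to finite order, the quotient argument $c_j=(\lambda_j c_j)/\lambda_j$ is legitimate because $\lambda_j(t)=(t-t_*)^{m_j}h(t)$ with $h$ nonvanishing near $t_*$, and the degeneracy of $\mathcal{B}_k$ on $\W_{k+1}$ is correctly discarded when computing $\sgn_k$. So your proposal supplies exactly the proof the paper delegates to the reference; it buys the reader a self-contained argument at the cost of invoking Rellich's theorem, which is unavoidable in any version of this proof.

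One remark in your favour: you prove $\W_k=\gen\Set{v_i(t_*) | m_i\geq k}$, whereas item (i) as printed adds the condition $\lambda_i^{(k)}(t_*)\neq 0$, i.e.\ order \emph{exactly} $k$. The printed version cannot be right: $\W_k$ is by definition a descending filtration ($\W_{k+1}\subseteq\W_k$), and the spans over $\Set{i | m_i=k}$ and $\Set{i | m_i=k+1}$ are mutually orthogonal, so the literal statement would force one of them to vanish. Your characterisation is the one consistent with Definition~\ref{thm:defroot} and is what the subsequent computation of $\sgn_k$ actually uses; the extra condition in (i) should be read as a slip.
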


	\begin{proof}
		It follows verbatim from \cite[Proposition~2.9, Corollary~2.14]{arxiv}: the results there contained hold also if the underlying Hilbert space is complex.
	\end{proof}
	
	\begin{rmk}
		Part (iii) of Proposition~\ref{thm:central} is the generalisation of Lemma~\ref{thm:lemmafava} to the degenerate case that we were seeking. 
	\end{rmk}
		
	We close this subsection with the following central result, which computes the spectral flow for a path of Hermitian matrices in terms of partial signatures.
	
	\begin{prop}\label{thm:cruciale}
		Let $A \in \Bsa(\H)$ and $C \in \G\Bsa(\H)$. Consider the affine path $\tilde{D} : (0, +\infty) \to \Bsa(\H)$ defined by
			\[
				\tilde{D}(s) \= sA + C
			\]
		and assume that $s_* \in (0, +\infty)$ is an isolated (possibly non-regular) crossing instant for $\tilde{D}$, so that $1/s_*$ is an eigenvalue of $-C^{-1}A$.
		Then for $\eps > 0$ small enough
			\[
				\spfl \bigl( \tilde{D}, [s_* - \eps, s_* + \eps] \bigr) = -\sgn \mathcal{B}_1,
			\]
		where
			\[
				\mathcal{B}_1 \= \langle C\,\cdot, \cdot \rangle \bigr|_{\H_{s_*}}
			\]
		and $\H_{s_*}$ is the generalised eigenspace
			\[
				\H_{s_*} \=\bigcup_{j = 1}^{\dim \H} \ker \biggl( C^{-1}A + \dfrac{1}{s_*} I \biggr)^j.
			\]
	\end{prop}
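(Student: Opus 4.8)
The plan is to localise the computation to the generalised eigenspace $\H_{s_*}$, identify the relevant Krein structure, and then run the partial–signature machinery of Proposition~\ref{thm:central}. First I would record the algebraic setup. Since $C$ is invertible I factor $\tilde{D}(s) = C(sT_0 + I)$ with $T_0 \= C^{-1}A$, so that $\ker \tilde{D}(s) = \ker(sT_0 + I)$ and $s_*$ is a crossing precisely when $\mu_* \= -1/s_*$ is an eigenvalue of $T_0$; then $\ker\tilde{D}(s_*)$ is the honest $\mu_*$-eigenspace of $T_0$, while $\H_{s_*}$ is its full generalised eigenspace. A one-line computation shows $T_0$ is self-adjoint for the indefinite Krein pairing $[u,v] \= \langle Cu, v\rangle$, since $T_0^* C = AC^{-1}C = A = CT_0$. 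As $\mu_*$ is real, the restriction of $[\cdot,\cdot]$ to $\H_{s_*}$ is non-degenerate and $\H$ splits $[\cdot,\cdot]$-orthogonally as $\H_{s_*} \oplus \H_{s_*}^{[\perp]}$; on the complement $\tilde{D}(s_*)$ is an isomorphism, hence contributes nothing near $s_*$, and the whole problem reduces to the $\dim\H_{s_*}$ eigenvalue branches of $\tilde{D}(s)$ that vanish at $s_*$.

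Next I would dispose of the regular (diagonalisable) case, which already produces the sign. Here $\W_1 = \ker\tilde{D}(s_*) = \H_{s_*}$, and for a root function $u$ with $u(s_*) = u_*$, differentiating $\tilde{D}(s)u(s) = (sA+C)u(s)$ once and pairing with $v_* \in \ker\tilde{D}(s_*)$ kills the term $\langle \tilde{D}(s_*)u'(s_*), v_*\rangle$ by self-adjointness, so the form $\mathcal{B}_1$ of \eqref{eq:Bk} is $\mathcal{B}_1(u_*, v_*) = \langle Au_*, v_*\rangle$. On eigenvectors $Au_* = CT_0 u_* = \mu_* C u_*$, whence $\mathcal{B}_1 = \mu_*\langle C\,\cdot,\cdot\rangle$ on $\H_{s_*}$. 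Since $\mu_* = -1/s_* < 0$, Proposition~\ref{thm:central}(iii) collapses to $\spfl\bigl(\tilde{D}, [s_*-\eps, s_*+\eps]\bigr) = \sgn\mathcal{B}_1 = -\sgn\langle C\,\cdot,\cdot\rangle\bigr|_{\H_{s_*}}$, which is the asserted identity in this case.

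For the general case the engine is Proposition~\ref{thm:central}(iii), $\spfl = \sum_{k}\sgn_{2k-1}(\tilde{D}, s_*)$, combined with part (ii), which identifies $\sgn_k$ with $\sum_i \sgn\lambda_i^{(k)}(s_*)$, the sum being over the real-analytic eigenvalue branches $\lambda_i$ of the Hermitian family $\tilde{D}(s)$ (available from \cite{kato}) that vanish to order exactly $k$ at $s_*$. The remaining task is to match this alternating sum with $\sgn[\cdot,\cdot]\bigr|_{\H_{s_*}}$. I would decompose $\H_{s_*}$ into Jordan chains of the nilpotent, $[\cdot,\cdot]$-self-adjoint operator $N \= T_0 - \mu_* I$: in a Jordan basis the restriction of $[\cdot,\cdot]$ to a chain of length $p$ is anti-diagonal with a single sign $\sigma = \pm 1$ (its \emph{sign characteristic}), contributing $0$ to the signature when $p$ is even and $\sigma$ when $p$ is odd, so that $\sgn[\cdot,\cdot]\bigr|_{\H_{s_*}} = \sum_{\text{odd chains}}\sigma$.

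The genuine heart of the argument, and the step I expect to be the main obstacle, is the dictionary between Jordan chains and vanishing branches: a chain of length $p$ must be accounted for by a single analytic branch $\lambda(s)$ vanishing to order exactly $p$ at $s_*$, with $\sgn\lambda^{(p)}(s_*) = -\sigma$ (the extra minus again reflecting $\mu_* < 0$, exactly as in the regular case where $\sgn\lambda'(s_*) = \sgn(\mu_*)\,\sigma$). Granting this — its analytic core being the perturbation analysis of the Hermitian pencil $sA + C$ carried out in \cite{arxiv} — even-length chains have even branch order and are therefore invisible to $\sum_k \sgn_{2k-1}$, precisely mirroring their invisibility to the Krein signature, while odd chains of length $2k-1$ feed $\sgn_{2k-1}$ with the value $-\sigma$. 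Summing over all chains yields
\[
	\sum_{k}\sgn_{2k-1}(\tilde{D}, s_*) = -\!\!\sum_{\text{odd chains}}\!\!\sigma = -\sgn\langle C\,\cdot,\cdot\rangle\bigr|_{\H_{s_*}},
\]
which is exactly the claim.
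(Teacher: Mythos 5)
The paper does not actually prove this proposition: its entire proof is the citation ``See \cite[Corollary~3.30]{arxiv}''. So the comparison here is between your reconstruction and the external reference, not an internal argument. Your architecture is the right one and matches how the result is established in that reference: the factorisation $\tilde{D}(s) = C(sC^{-1}A + I)$, the observation that $T_0 = C^{-1}A$ is self-adjoint for the Krein pairing $[u,v] = \langle Cu, v\rangle$ (your computation $T_0^{*}C = CT_0$ is correct), the non-degeneracy of $[\cdot,\cdot]$ on the generalised eigenspace of the real eigenvalue $\mu_* = -1/s_*$, and the resulting localisation of the spectral flow to $\H_{s_*}$ are all sound. Your treatment of the regular case is complete and correct: $\mathcal{B}_1(u_*,v_*) = \langle Au_*, v_*\rangle = \mu_*\langle Cu_*, v_*\rangle$ on $\ker\tilde{D}(s_*)$, and $\mu_* < 0$ produces exactly the minus sign in the statement. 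The Jordan-chain bookkeeping for the Krein signature (even chains contribute $0$, odd chains contribute their sign characteristic $\sigma$) is also standard and correct.

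The gap is the one you yourself flag: the ``dictionary'' asserting that each Jordan chain of length $p$ of $T_0 - \mu_* I$ on $\H_{s_*}$ corresponds to a single analytic eigenvalue branch of the Hermitian family $\tilde{D}(s)$ vanishing to order exactly $p$ at $s_*$, with $\sgn\lambda^{(p)}(s_*) = -\sigma$. This is not a technical footnote that can be ``granted'': combined with Proposition~\ref{thm:central}, it \emph{is} the proposition, and it is precisely what \cite[Corollary~3.30]{arxiv} establishes. Two separate facts are being assumed there: (a) that the orders of vanishing of the branches coincide, with multiplicity, with the partial multiplicities of $\mu_*$ (a local-Smith-form statement for analytic Hermitian matrix functions), and (b) that the leading sign of each odd-order branch equals $-\sigma$, where $\sigma$ is the sign characteristic of the corresponding block (a Gohberg--Lancaster--Rodman-type statement, in which the normalisation producing the minus sign must be tracked carefully — your length-one computation confirms it, but the claim for general odd $p$ is exactly the hard part). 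An alternative, more self-contained route — closer in spirit to the partial-signature formalism of Subsection~\ref{subsec:parsgn} — is to compute the higher forms $\mathcal{B}_k$ of \eqref{eq:Bk} directly for the affine path, obtaining them as $\langle C\,\cdot,\cdot\rangle$ twisted by powers of the nilpotent part of $T_0 - \mu_* I$, and then prove the purely algebraic identity $\sum_{k}\sgn_{2k-1} = -\sgn\langle C\,\cdot,\cdot\rangle|_{\H_{s_*}}$; this avoids eigenvalue perturbation theory altogether. As written, your proposal is a correct and well-organised outline, but it does not constitute an independent proof of the statement: it defers the essential step to the same citation the paper uses.
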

	
	\begin{proof}
		See \cite[Corollary~3.30]{arxiv}.
	\end{proof}

		\subsection{Krein signature of a complex symplectic matrix} \label{subsec:Krein}
	  
	We now briefly recall some basic facts about the Krein signature of a symplectic matrix. Our main references are the books \cite[Chapter 1]{Abbo} and \cite{long}.
	  
	Let $S \in \Sp(2n, \R)$ be a real symplectic matrix. In order to define the \emph{Krein signature} of the eigenvalues of $S$, we consider the usual action of $S$ on $\C^{2n}$
		\[
			S(\xi + i\eta) \= S \xi + i S \eta, \qquad \forall\, \xi, \eta \in \R^{2n},
		\]
	and the Hermitian form $g : \C^{2n} \times \C^{2n} \to \R$ given by
		\[
			g(v, w) \= \langle Gv, w \rangle \qquad \forall\, v, w \in \C^{2n},
		\]
	where $\langle \cdot, \cdot \rangle$ denotes the standard scalar product in $\C^{2n}$. The \emph{complex symplectic group} $\Sp(2n, \C)$ is the set of all complex linear automorphisms of
	$\C^{2n}$ which preserve $g$ or, equivalently, the set of all complex matrices $S$ satisfying the condition $S^\dagger JS = J$. A matrix is an element of $\Sp(2n, \R)$ if and only if it belongs
	to $\Sp(2n, \C)$ and it is real. Following the discussion in \cite[pages~12--13]{Abbo} and \cite[Chapter~1]{long}, it is possible to show that the spectral decomposition of $\C^{2n}$
		\[
			\C^{2n} = \bigoplus_{\mathclap{\substack{\lambda\, \in\, \sigma(S)\\ \abs{\lambda}\, \geq\, 1}}}\ F_\lambda,
		\]
	where
		\[
			F_\lambda \= \begin{cases}
						E_\lambda & \textup{if } \abs{\lambda} = 1 \\
						E_\lambda \oplus E_{\overline{\lambda}^{-1}} & \textup{if } \abs{\lambda} > 1
					\end{cases}
		\]
	and
		\[
			E_\lambda \= \bigcup_{j = 1}^{2n} \ker (S - \lambda I)^j,
		\]
	is $g$-orthogonal. Therefore each restriction $g|_{F_\lambda}$ is non-degenerate for all $\lambda \in \sigma(S)$.
	 
	\begin{rmk} \label{sgndim}
		Because of the non-degeneracy of $g$ on each space $F_\lambda$ we obtain that
			\[
				\begin{split}
					\sgn\, g \bigr|_{F_\lambda} & \= n^+\bigl( g \big|_{F_\lambda} \bigr) - n^-\bigl( g \big|_{F_\lambda} \bigr) \\
										 & \equiv n^+\bigl( g \big|_{F_\lambda} \bigr) + n^-\bigl( g \big|_{F_\lambda} \bigr) \qquad \mod 2 \\
										 & = \dim F_\lambda. 
				\end{split}
			\]
	\end{rmk}
	
	If $\lambda \in \sigma(S) \setminus \U$ has algebraic multiplicity $d$, then $g$ restricted to the $2d$-dimensional subspace $E_\lambda \oplus E_{\overline{\lambda}^{-1}}$ has a
	$d$-dimensional isotropic subspace. Thus $g$ has zero signature on $E_\lambda \oplus E_{\overline{\lambda}^{-1}}$. On the contrary, an eigenvalue $\lambda \in \sigma(S) \cap \U$ may
	have any signature on $E_\lambda$, and therefore we are entitled to give the following definition.
	
	\begin{defn}\label{def:Kreinsignature}
		Let $S \in \Sp(2n, \C)$ be a complex symplectic matrix and let $\lambda \in \sigma(S) \cap \U$ be a unitary eigenvalue of $S$.
		The \emph{Krein signature} of $\lambda$ is the signature of the restriction $g|_{E_\lambda}$ of the Hermitian form $g$ to the generalised eigenspace $E_\lambda$.
	\end{defn}
	
	Assume that $S \in \Sp(2n,\R)$. If an eigenvalue $\lambda \in \sigma(S) \cap \U$ has Krein signature $p$, then its complex conjugate $\overline{\lambda}$ (which is again an eigenvalue of
	$S$ because of the properties of the spectrum of symplectic matrices, cf.~Proposition~\ref{prop:spectra}) has Krein signature $-p$. This implies, in particular, that $1$ and $-1$ always have
	Krein signature $0$\label{page:sign0}.


	
	\addcontentsline{toc}{section}{\refname}	
	\bibliographystyle{amsplain}			
	\bibliography{BarJadPor13_ref}			

	\vfill
	
	\vspace{1cm}
	\noindent
	\textsc{Vivina L.~Barutello}\\
	Dipartimento di Matematica \lq\lq G.~Peano\rq\rq\\
	Universit\`a degli Studi di Torino\\
	Via Carlo Alberto, 10 \\
	10123 Torino \\
	Italy\\
	E-mail: \email{vivina.barutello@unito.it}

	\vspace{1cm}
	\noindent
	\textsc{Riccardo Danilo Jadanza}\\
	Dipartimento di Scienze Matematiche \lq\lq J.-L.~Lagrange\rq\rq\ (DISMA)\\
	Politecnico di Torino\\
	Corso Duca degli Abruzzi, 24\\
	10129 Torino\\
	Italy\\
	E-mail: \email{riccardo.jadanza@polito.it}

	\vspace{1cm}
	\noindent
	\textsc{Alessandro Portaluri}\\
	Dipartimento di Scienze Agrarie, Forestali e Alimentari (DISAFA)\\
	Universit\`a degli Studi di Torino\\
	Via Leonardo da Vinci, 44\\
	10095 Grugliasco (TO)\\
	Italy\\
	E-mail: \email{alessandro.portaluri@unito.it}\\
	Website: \href{http://aportaluri.wordpress.com}{\textsf{http://aportaluri.wordpress.com}}

\end{document}